\newtheorem{thm}{Theorem}
\newtheorem{prop}[thm]{Proposition}
\newtheorem{lem}[thm]{Lemma}
\newtheorem{cor}[thm]{Corollary}
\newtheorem{fact}[thm]{Fact}
\theoremstyle{definition}
\newtheorem*{rem*}{Remark}
\newtheorem*{note*}{Note}
\title{Plane partitions with bounded size of parts and biorthogonal polynomials}
\author{ksh94}
\begin{document}
\maketitle

\begin{abstract}
  Nice formulae for plane partitions with bounded size of parts (or boxed plane partitions),
  which generalize the norm-trace generating function by Stanley and the trace generating function by Gansner,
  are exhibited.
  The derivation of the nice formulae is based on lattice path combinatorics of biorthogonal polynomials,
  especially of the little $q$-Laguerre polynomials and a generalization of the little $q$-Laguerre polynomials.
  A summation formula which generalizes the $q$-Chu--Vandermonde identity is also shown and utilized to prove
  the orthogonality of the generalized little $q$-Laguerre polynomials.
\end{abstract}

\section{Introduction}
\label{sec:introduction}

A plane partition $\pi$ of a nonnegative integer $N$ is a two-dimensional array
\begin{align}
  \pi = 
  \begin{pmatrix}
    \pi_{1,1} & \pi_{1,2} & \pi_{1,3} & \cdots \\
    \pi_{2,1} & \pi_{2,2} & \pi_{2,3} & \cdots \\
    \pi_{3,1} & \pi_{3,2} & \pi_{3,3} & \cdots \\
    \vdots    & \vdots    & \vdots    &
  \end{pmatrix}
\end{align}
of nonnegative integers $\pi_{i,j}$ such that $\sum_{i,j=1}^{\infty} \pi_{i,j} = N$ and
$\pi_{i,j} \ge \max\{ \pi_{i+1,j}, \pi_{i,j+1} \}$ for every $(i,j) \in \mathbb{Z}_{\ge 1}^{2}$.
(Throughout the paper we write $\mathbb{Z}_{\ge k}$ for the set of integers at least $k$.)
A plane partition $\pi$ distributes $N$ among its parts $\pi_{i,j}$ so that
each row and each column are non-increasing.
MacMahon studies plane partitions in depth and finds
the {\em norm generating function} for plane partitions (of rectangular shape) with {\em bounded size of parts}
\begin{align} \label{eq:NormGFBSP}
  \sum_{\pi \in \mathcal{P}(r,c,n)} q^{|\pi|}
  = \prod_{i=0}^{r-1} \prod_{j=0}^{c-1} \prod_{k=0}^{n-1} \frac{1 - q^{i+j+k+2}}{1 - q^{i+j+k+1}}
\end{align}
where $\mathcal{P}(r,c,n)$ denotes the set of plane partitions
of at most $r$ rows and at most $c$ columns with parts at most $n$, namely
$\pi \in \mathcal{P}(r,c,n)$ if and only if
$\pi_{r+i,j} = \pi_{i,c+j} = 0$ for every $(i,j) \in \mathbb{Z}_{\ge 1}$ and $\pi_{1,1} \le n$,
and $|\pi| = \sum_{i,j=1}^{\infty} \pi_{i,j}$ that is called the {\em norm} of $\pi$.
The limit $n \to \infty$ reduces \eqref{eq:NormGFBSP} to
the norm generating function for plane partitions with {\em unbounded size of parts}
\begin{align} \label{eq:NormGFUSP}
  \sum_{\pi \in \mathcal{P}(r,c)} q^{|\pi|}
  = \prod_{i=0}^{r-1} \prod_{j=0}^{c-1} (1 - q^{i+j+1})^{-1}
\end{align}
where $\mathcal{P}(r,c)$ denotes the set of plane partitions of at most $r$ rows and at most $c$ columns.
(No restriction is imposed to the size of parts.)
See MacMahon's book \cite[Section IX]{MacMahon(1916)} for details.

Generalizing the norm generating functions \eqref{eq:NormGFBSP} and \eqref{eq:NormGFUSP} is
an important subject in the study of plane partitions.
In particular a great progress is made by Stanley who considers the {\em trace} of plane partitions,
\begin{align}
  \mathsf{tr}(\pi) = \sum_{i=1}^{\infty} \pi_{i,i},
\end{align}
and finds the {\em norm-trace generating function}
\begin{align} \label{eq:NormTrGF}
  \sum_{\pi \in \mathcal{P}(r,c)} q^{|\pi|} a^{\mathsf{tr}(\pi)}
  = \prod_{i=0}^{r-1} \prod_{j=0}^{c-1} (1 - aq^{i+j+1})^{-1}
\end{align}
that recovers \eqref{eq:NormGFUSP} with $a = 1$ \cite{Stanley(1971),Stanley(1973)}.
Gansner later considers the {\em $\ell$-traces} of plane partitions,
\begin{align}
  \mathsf{tr}_{\ell}(\pi) = \sum_{j-i=\ell} \pi_{i,j}, \qquad \ell \in \mathbb{Z},
\end{align}
and extends \eqref{eq:NormTrGF} into the {\em trace generating function}
\begin{align} \label{eq:LTrGF}
  \sum_{\pi \in \mathcal{P}(r,c)} \prod_{-r < \ell < c} q_{\ell}^{\mathsf{tr}_{\ell}(\pi)}
  = \prod_{i=0}^{r-1} \prod_{j=0}^{c-1} \left( 1 - \prod_{\ell=-i}^{j} q_{\ell} \right)^{-1}
\end{align}
that recovers \eqref{eq:NormTrGF} with $q_{\ell} = q$ for every $\ell \in \mathbb{Z}$ except for $q_0 = aq$
\cite{Gansner(1981Burge),Gansner(1981HG)}.
(Gansner provides in \cite{Gansner(1981Burge),Gansner(1981HG)} more general results on
(reverse) plane partitions of arbitrary shape.)

Both the generating functions \eqref{eq:NormTrGF} and \eqref{eq:LTrGF} generalize
the norm generating function \eqref{eq:NormGFUSP} for plane partitions with unbounded size of parts.
We now have a simple question:
{\em Are there analogues of \eqref{eq:NormTrGF} and \eqref{eq:LTrGF} which generalize
the norm generating function \eqref{eq:NormGFBSP} for plane partitions with bounded size of parts}?
A naive answer is {\em no} because the simple replacement of $\mathcal{P}(r,c)$ in
\eqref{eq:NormTrGF} and \eqref{eq:LTrGF} with $\mathcal{P}(r,c,n)$ does not results in nice (product) formulae.
As is clarified in this paper (Theorems \ref{thm:NFPPBSO01} and \ref{thm:NFPPBSP02}), however,
the answer may be {\em yes} when we seek those from sums of the forms
\begin{align}
  \sum_{\pi \in \mathcal{P}(r,c,n)} q^{|\pi|} a^{\mathsf{tr}(\pi)} \omega_{n}(\pi)
  \quad \text{and} \quad
  \sum_{\pi \in \mathcal{P}(r,c,n)} \omega_{n}(\pi) \prod_{-r < \ell < c} q_{\ell}^{\mathsf{tr}_{\ell}(\pi)}
\end{align}
that include some weight functions $\omega_{n}$ for plane partitions such that
$\omega_{n}(\pi) \to 1$ as $n \to \infty$.

Orthogonal polynomials appear in various areas of mathematics, see, e.g., \cite{Szego(1975OP),Chihara(1978OP)}.
In combinatorics many combinatorial interpretations are given to
various families of (classical) orthogonal polynomials \cite{Foata(1984)}.
A combinatorial theory for general orthogonal polynomials is also given by Viennot who develops
a unified combinatorial approach to orthogonal polynomials by means of path diagrams
\cite{Viennot(1983OP),Viennot(1985)}.
Analogous results are also obtained for biorthogonal polynomials \cite{KimD(1992)}
(which are different from, in precise, a special case of biorthogonal polynomials examined in this paper) and for
Laurent biorthogonal polynomials \cite{Kamioka(2007),Kamioka(2008)}.
In this paper a combinatorial interpretation to general {\em biorthogonal polynomials} is developed in terms of
(weighted) lattice paths on a square lattice (Section \ref{sec:LPs}).

Non-intersecting paths and determinants are fundamental tools for analyzing plane partitions
\cite{Gessel-Viennot(PRE1989),Krattenthaler(1990),Johansson(2002)}.
In this paper the combinatorial interpretation of biorthogonal polynomials is applied to
deriving nice formulae for plane partitions with bounded size of parts where
exact evaluations of determinants are performed by means of biorthogonal polynomials.
Specifically the {\em little $q$-Laguerre polynomials} and their generalizations are examined to derive
nice formulae generalizing the trace-type generating functions \eqref{eq:NormTrGF} and \eqref{eq:LTrGF}.

This paper is organized as follows.
In Section \ref{sec:BOPs} basics of biorthogonal polynomials needed in this paper are described.
In Section \ref{sec:LPs} a combinatorial interpretation of (general) biorthogonal polynomials is developed
in terms of lattice paths on a square lattice.

In Section \ref{sec:LQLP} the combinatorial interpretation is applied to
the {\em little $q$-Laguerre polynomials} as a concrete example.
The results are utilized in Section \ref{sec:NFPPBSP01} to derive
a nice formula for plane partitions with bounded size of parts which generalizes
the norm-trace generating function \eqref{eq:NormTrGF} for those with unbounded size of parts
(Theorem \ref{thm:NFPPBSO01}).
The discussion in Section \ref{sec:LQLP} is generalized in Section \ref{sec:GenLQLP} for
the {\em generalized little $q$-Laguerre polynomials} newly introduced there, and
the results are used in Section \ref{sec:NFPPBSP02} to derive a nice formula
which generalizes the trace generating function \eqref{eq:LTrGF}
(Theorem \ref{thm:NFPPBSP02}).


The orthogonality of the generalized little $q$-Laguerre polynomials
(Theorem \ref{thm:GenLQLPOrthty}) is proven by means of a generalization of
the $q$-Chu--Vandermonde identity \cite{Ismail(2005CQOP),Koekoek-Leskey-Swarttow(2010)} for
basic hypergeometric series (Lemma \ref{lem:GenQCV}).
The proof of the lemma is given in Appendix \ref{sec:GenQCV}.

\section{Biorthogonal polynomials}
\label{sec:BOPs}

Let $\mathbb{K}$ be a field.
Let $\mathcal{F}: \mathbb{K}[x^{\pm 1},y^{\pm 1}] \to \mathbb{K}$ be a linear functional defined on
the space of Laurent polynomials in $x$ and $y$ over $\mathbb{K}$.
Due to the linearity, $\mathcal{F}$ is uniquely determined by the \emph{moments}
\begin{align} \label{eq:Moments}
  f_{i,j} = \mathcal{F}[x^i y^j], \qquad (i,j) \in \mathbb{Z}^{2}.
\end{align}
Let us define determinants of moments
\begin{align} 
  \Delta^{(r,c)}_{n}
  = & \det_{0 \le i,j < n} (f_{r+i,c+j}) \notag \\
  = &
  \begin{vmatrix}
    f_{r,c}     & \cdots & f_{r,c+j}     & \cdots & f_{r,c+n-1}     \\
    \vdots      &        & \vdots        &        & \vdots          \\
    f_{r+i,c}   & \cdots & f_{r+i,c+j}   & \cdots & f_{r+i,c+n-1}   \\
    \vdots      &        & \vdots        &        & \vdots          \\
    f_{r+n-1,c} & \cdots & f_{r+n-1,c+j} & \cdots & f_{r+n-1,c+n-1} \\
  \end{vmatrix}
\end{align}
for $(r,c) \in \mathbb{Z}^{2}$ and $n \in \mathbb{Z}_{\ge 0}$ where $\Delta^{(r,c)}_{0} = 1$.
We assume throughout the paper that the determinant $\Delta^{(r,c)}_{n}$ does not vanish.

We define a (monic) \emph{biorthogonal polynomial} $P^{(r,c)}_{n}(x) \in \mathbb{K}[x]$,
$(r,c) \in \mathbb{Z}^{2}$, $n \in \mathbb{Z}_{\ge 0}$, (with respect to $\mathcal{F}$) as a polynomial such that
the leading term of $P^{(r,c)}_{n}(x)$ is $x^{n}$ and the {\em orthogonality}
\begin{align} \label{eq:BOPOrthty}
  \mathcal{F}[x^{r} y^{c+j} P^{(r,c)}_{n}(x)] = h^{(r,c)}_{n} \delta_{j,n}, \qquad 0 \le j \le n,
\end{align}
holds with some normalization constant $h^{(r,c)}_{n} \in \mathbb{K} \setminus \{ 0 \}$ where
$\delta_{j,n}$ denotes the Kronecker delta.
The biorthogonal polynomial $P^{(r,c)}_{n}(x)$ is uniquely determined from $\mathcal{F}$.
Indeed $P^{(r,c)}_{n}(x)$ should have the determinant expression
\begin{align} \label{eq:BOPDet}
  P^{(r,c)}_{n}(x) = {}
  \begin{vmatrix}
    f_{r,c}   & \cdots & f_{r,c+j}   & \cdots & f_{r,c+n-1}   & 1      \\
    \vdots    &        & \vdots      &        & \vdots        & \vdots \\
    f_{r+i,c} & \cdots & f_{r+i,c+j} & \cdots & f_{r+i,c+n-1} & x^{i}  \\
    \vdots    &        & \vdots      &        & \vdots        & \vdots \\
    f_{r+n,c} & \cdots & f_{r+n,c+j} & \cdots & f_{r+n,c+n-1} & x^{n}  \\
  \end{vmatrix}
  {} \times (\Delta^{(r,c)}_{n})^{-1}
\end{align}
from the monicity and the orthogonality \eqref{eq:BOPOrthty}.
(Write down \eqref{eq:BOPOrthty} in a linear system for the coefficients of $P^{(r,c)}_{n}(x)$ and apply Cramer's rule.)
We have from \eqref{eq:BOPOrthty} and \eqref{eq:BOPDet} that
\begin{align} \label{eq:BOPNormConstDet}
  h^{(r,c)}_{n} = \frac{\Delta^{(r,c)}_{n+1}}{\Delta^{(r,c)}_{n}}.
\end{align}

The reason why we call $P^{(r,c)}_{n}(x)$ ``biorthogonal polynomials'' is the following.
Let us consider a monic polynomial in $y$ 
\begin{align}
  Q^{(r,c)}_{n}(y) = {}
  \begin{vmatrix}
    f_{r,c}     & \cdots & f_{r,c+j}     & \cdots & f_{r,c+n}     \\
    \vdots      &        & \vdots        &        & \vdots        \\
    f_{r+i,c}   & \cdots & f_{r+i,c+j}   & \cdots & f_{r+i,c+n}   \\
    \vdots      &        & \vdots        &        & \vdots        \\
    f_{r+n-1,c} & \cdots & f_{r+n-1,c+j} & \cdots & f_{r+n-1,c+n} \\
    1           & \cdots & y^{j}         & \cdots & y^{n}         \\
  \end{vmatrix}
  {} \times (\Delta^{(r,c)}_{n})^{-1}.
\end{align}
We then have the {\em biorthogonality} between $P^{(r,c)}_{n}(x)$ and $Q^{(r,c)}_{n}(x)$ that
\begin{align}
  \mathcal{F}[x^{r} y^{c} P^{(r,c)}_{m}(x) Q^{(r,c)}_{n}(y)] = h^{(r,c)}_{n} \delta_{m,n}, \qquad
  m,n \in \mathbb{Z}_{\ge 0}.
\end{align}
We thus call $P^{(r,c)}_{n}(x)$ biorthogonal polynomials in view of
the existsnece of biorthogonal partners $Q^{(r,c)}_{n}(y)$.

\begin{rem*}
  The biorthogonal polynomials considered here naturally involve (ordinary) orthogonal polynomials.
  Orthogonal polynomials, say $P_n(x)$, $n \in \mathbb{Z}_{\ge 0}$, are
  polynomials with $\deg P_n(x) = n$ satisfying the self-orthogonality
  \begin{align}
    \mathcal{F}[P_m(x) P_n(x)] = h_n \delta_{m,n}, \qquad m,n \in \mathbb{Z}_{\ge 0},
  \end{align}
  with some linear functional $\mathcal{F}: \mathbb{K}[x] \to \mathbb{K}$ and some constants $h_n \neq 0$, see, e.g.,
  \cite{Szego(1975OP),Chihara(1978OP)}.
  If we regard $x$ and $y$ as the same indeterminate $x=y$
  biorthogonal polynomials then reduce to orthogonal polynomials.
\end{rem*}

The following proposition plays a key role in
our combinatorial interpretation of biorthogonal polynomials in Section \ref{sec:LPs}.

\begin{prop}[cf.~\cite{Maeda-Miki-Tsujimoto(2013)}] \label{prop:ARs}
  The biorthogonal polynomials satisfy the adjacent relations
  \begin{subequations} \label{eq:ARs}
    \begin{align}
      x P^{(r+1,c)}_{n}(x) &= P^{(r,c)}_{n+1}(x) + a^{(r,c)}_{n} P^{(r,c)}_{n}(x),
      \label{eq:AR01} \\
      P^{(r,c)}_{n}(x) &= P^{(r,c+1)}_{n}(x) + b^{(r,c)}_{n} P^{(r,c+1)}_{n-1}(x)
      \label{eq:AR02}
    \end{align}
  \end{subequations}
  for $(r,c) \in \mathbb{Z}^{2}$ and $n \in \mathbb{Z}_{\ge 0}$ with $P^{(r,c+1)}_{-1}(x) \equiv 0$ where
  \begin{subequations}
    \begin{align}
      a^{(r,c)}_{n}
      {} = \frac{h^{(r+1,c)}_{n}}{h^{(r,c)}_{n}}
      {} = \frac{\Delta^{(r+1,c)}_{n+1} \Delta^{(r,c)}_{n}}{\Delta^{(r+1,c)}_{n} \Delta^{(r,c)}_{n+1}},
      \label{eq:ARCfA} \\
      b^{(r,c)}_{n}
      {} = \frac{h^{(r,c)}_{n}}{h^{(r,c+1)}_{n-1}}
      {} = \frac{\Delta^{(r,c)}_{n+1} \Delta^{(r,c+1)}_{n-1}}{\Delta^{(r,c)}_{n} \Delta^{(r,c+1)}_{n}}.
      \label{eq:ARCfB}
    \end{align}
  \end{subequations}
\end{prop}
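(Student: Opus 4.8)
The plan is to verify each adjacent relation by checking that the right-hand side satisfies the defining monicity-plus-orthogonality characterization of the left-hand side, invoking uniqueness of biorthogonal polynomials. For \eqref{eq:AR01}, consider $R(x) := xP^{(r+1,c)}_{n}(x) - P^{(r,c)}_{n+1}(x)$. Since both $xP^{(r+1,c)}_{n}(x)$ and $P^{(r,c)}_{n+1}(x)$ are monic of degree $n+1$, the difference $R(x)$ has degree at most $n$. I would then test $R$ against the functionals $\mathcal{F}[x^{r}y^{c+j}\,\cdot\,]$ for $0 \le j \le n$: using \eqref{eq:BOPOrthty} for $P^{(r,c)}_{n+1}$ gives $\mathcal{F}[x^{r}y^{c+j}P^{(r,c)}_{n+1}(x)] = 0$ for all such $j$; for the first term, $\mathcal{F}[x^{r}y^{c+j}\cdot xP^{(r+1,c)}_{n}(x)] = \mathcal{F}[x^{r+1}y^{c+j}P^{(r+1,c)}_{n}(x)] = h^{(r+1,c)}_{n}\delta_{j,n}$ again by \eqref{eq:BOPOrthty}. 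Hence $\mathcal{F}[x^{r}y^{c+j}R(x)] = h^{(r+1,c)}_{n}\delta_{j,n}$ for $0 \le j \le n$. Comparing with the orthogonality characterization of $P^{(r,c)}_{n}(x)$ (degree $\le n$, leading coefficient to be determined, same functional pattern), uniqueness forces $R(x) = a^{(r,c)}_{n}P^{(r,c)}_{n}(x)$ with $a^{(r,c)}_{n} = h^{(r+1,c)}_{n}/h^{(r,c)}_{n}$. The second equality in \eqref{eq:ARCfA} is then immediate from \eqref{eq:BOPNormConstDet}.

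For \eqref{eq:AR02} I would argue similarly. Set $S(x) := P^{(r,c)}_{n}(x) - P^{(r,c+1)}_{n}(x)$; both are monic of degree $n$, so $\deg S \le n-1$, and I claim $S$ is a multiple of $P^{(r,c+1)}_{n-1}(x)$. To see this, test $S$ against $\mathcal{F}[x^{r}y^{c+1+j}\,\cdot\,]$ for $0 \le j \le n-1$. The term $\mathcal{F}[x^{r}y^{c+1+j}P^{(r,c+1)}_{n}(x)]$ vanishes for $j \le n-1$ by \eqref{eq:BOPOrthty} (applied with shifted column index $c+1$, where the relevant range is $0 \le j \le n$). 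For the term $\mathcal{F}[x^{r}y^{c+1+j}P^{(r,c)}_{n}(x)] = \mathcal{F}[x^{r}y^{c+(j+1)}P^{(r,c)}_{n}(x)]$, this equals $h^{(r,c)}_{n}\delta_{j+1,n}$, i.e.\ it is $h^{(r,c)}_{n}$ when $j = n-1$ and $0$ otherwise. Therefore $S$ is orthogonal (in the $(r,c+1)$ pattern) to $y^{c+1+j}$ for $0 \le j \le n-2$ and picks up $h^{(r,c)}_{n}$ at $j = n-1$; since $\deg S \le n-1$, uniqueness of $P^{(r,c+1)}_{n-1}(x)$ gives $S(x) = b^{(r,c)}_{n}P^{(r,c+1)}_{n-1}(x)$ with $b^{(r,c)}_{n} = h^{(r,c)}_{n}/h^{(r,c+1)}_{n-1}$, and \eqref{eq:BOPNormConstDet} converts this to the determinantal form in \eqref{eq:ARCfB}.

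The main obstacle is purely bookkeeping rather than conceptual: one must be careful that when $S$ (or $R$) has degree strictly less than expected, it still lies in the span of the lower biorthogonal polynomials, which requires knowing that $\{P^{(r,c+1)}_{k}(x)\}_{k=0}^{n-1}$ is a basis of polynomials of degree $\le n-1$ and that the pairing $g \mapsto (\mathcal{F}[x^{r}y^{c+1+j}g(x)])_{j}$ detects membership correctly — this is exactly where the nonvanishing hypothesis on the $\Delta^{(r,c)}_{n}$ is used, since it guarantees the $P^{(r,c)}_{n}$ exist and the relevant Gram-type matrices are invertible. A clean way to phrase the whole argument, avoiding case analysis on the degree drop, is to expand an arbitrary polynomial of degree $\le n$ in the basis $\{P^{(r,c)}_{k}\}$ (resp.\ $\{P^{(r,c+1)}_{k}\}$) and read off coefficients via \eqref{eq:BOPOrthty}; the boundary case $n = 0$ in \eqref{eq:AR02} is handled by the convention $P^{(r,c+1)}_{-1}(x) \equiv 0$. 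I would also double-check the index alignment in \eqref{eq:AR01}: the functional there uses exponent $x^{r}$ against $P^{(r,c)}_{n}$ but $x^{r+1}$ naturally appears from the factor $x$ times $P^{(r+1,c)}_{n}$, which is why the superscript shifts from $(r+1,c)$ to $(r,c)$ — this consistency is the one place a sign or index slip would be easy to make.
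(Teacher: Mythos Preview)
Your argument is correct and is essentially the same as the paper's: the paper expands $xP^{(r+1,c)}_{n}(x)$ in the basis $\{P^{(r,c)}_{k}\}_{k=0}^{n+1}$ and kills the coefficients $\alpha_0,\dots,\alpha_{n-1}$ one at a time by applying $\mathcal{F}[x^{r}y^{c+j}\,\cdot\,]$ for $j=0,1,\dots$, then reads off $\alpha_n$ from the $j=n$ functional---exactly the triangular elimination you package as ``uniqueness,'' and which you yourself note can be phrased as a basis expansion. The only cosmetic difference is that you first subtract the top monic term to form $R$ (resp.\ $S$) and then identify the remainder, whereas the paper keeps the full expansion throughout; both routes use the same orthogonality computations and the same nonvanishing of the $h^{(r,c)}_{k}$ (equivalently of the $\Delta^{(r,c)}_{k}$).
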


\begin{proof}
  We expand $x P^{(r+1,c)}_{n}(x)$ into a linear combination of $P^{(r,c)}_{k}(x)$.
  Since $x P^{(r+1,c)}_{n}(x)$ is a monic polynomial in $x$ of degree $n+1$ we have
  \begin{align} \label{eq:ARsExpand}
    x P^{(r+1,c)}_{n}(x) = P^{(r,c)}_{n+1}(x) + \sum_{k=0}^{n} \alpha_k P^{(r,c)}_{k}(x)
  \end{align}
  with some constants $\alpha_k$ which can be determined as follows.
  Multiply $x^{r} y^{c}$ to the both sides of \eqref{eq:ARsExpand} and apply $\mathcal{F}$.
  We then obtain $\alpha_0 = 0$ from the orthogonality \eqref{eq:BOPOrthty} and hence
  \begin{align}
    x P^{(r+1,c)}_{n}(x) = P^{(r,c)}_{n+1}(x) + \sum_{k=1}^{n} \alpha_k P^{(r,c)}_{k}(x).
  \end{align}
  Next multiply $x^{r} y^{c+1}$ and apply $\mathcal{F}$ similarly.
  We then obtain $\alpha_1 = 0$ from \eqref{eq:BOPOrthty} and
  \begin{align}
    x P^{(r+1,c)}_{n}(x) = P^{(r,c)}_{n+1}(x) + \sum_{k=2}^{n} \alpha_k P^{(r,c)}_{k}(x),
  \end{align}
  and so on.
  We finally find that $\alpha_0 = \cdots = \alpha_{n-1} = 0$ and
  \begin{align}
    x P^{(r+1,c)}_{n}(x) = P^{(r,c)}_{n+1}(x) + \alpha_n P^{(r,c)}_{n}(x).
  \end{align}
  Here multiply $x^{r+n} y^{c}$ and apply $\mathcal{F}$ to find that
  $\alpha_n = h^{(r+1,c)}_{n} / h^{(r,c)}_{n} = a^{(r,c)}_{n}$ from \eqref{eq:BOPOrthty}.
  We thus obtain \eqref{eq:AR01}.
  The relation \eqref{eq:AR02} can be obtained in almost the same way as \eqref{eq:AR01} by using
  the orthogonality \eqref{eq:BOPOrthty}.
\end{proof}

\section{Lattice path combinatorics}
\label{sec:LPs}

We show in Section \ref{sec:LPs} a combinatorial interpretation of
(general) biorthogonal polynomials in terms of lattice paths on a square lattice.
The combinatorial interpretation partly owes the basic idea to
the combinatorial theory of general orthogonal polynomials developed by
Viennot \cite{Viennot(1983OP),Viennot(1985)}.

Let us view a two-dimensional integral lattice $\mathbb{Z}_{\ge 0}^{2}$ (in the first quadrant) as a square lattice.
We depict the square lattice in matrix-like coordinates where
the south and east neighbors of the point $(i,j) \in \mathbb{Z}_{\ge 0}^{2}$ are $(i+1,j)$ and $(i,j+1)$ respectively.
The square lattice $\mathbb{Z}_{\ge 0}^{2}$ makes a graph of vertical and horizontal edges
connecting every two neighboring lattice points.
Let $\alpha_{i,j} \in \mathbb{K}$ for $(i-1,j) \in \mathbb{Z}_{\ge 0}^{2}$.
We then label the vertical edge connecting $(i,j)$ and $(i-1,j)$ by $\alpha_{i,j}$ and
every horizontal edge by $1$, as shown in Figure \ref{fig:SqLattice}.
\begin{figure}
  \centering
  \includegraphics{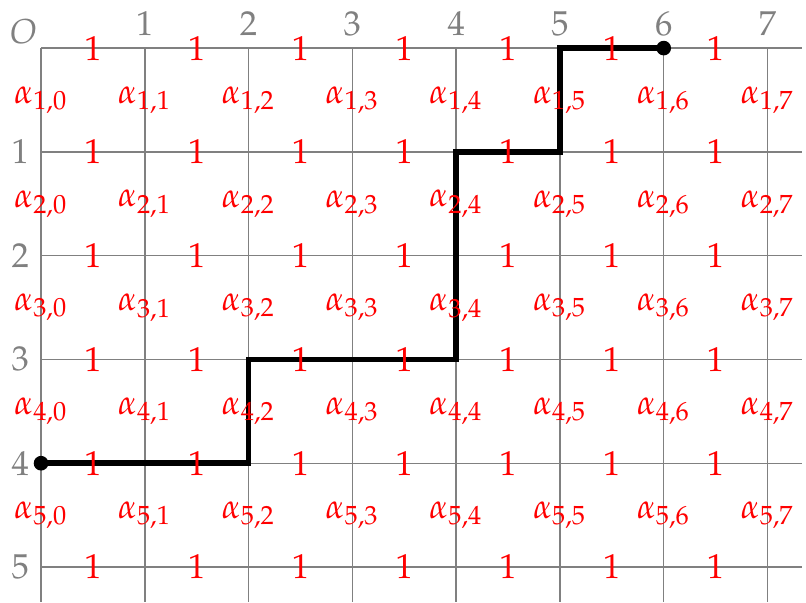}
  \caption{The square lattice $\mathbb{Z}_{\ge 0}^{2}$ with edges labelled.
    The thick line shows a lattice path on the square lattice going from $(4,0)$ to $(0,6)$.}
  \label{fig:SqLattice}
\end{figure}

Lattice paths considered in this paper are those on the square lattice $\mathbb{Z}_{\ge 0}^{2}$
which travel from some lattice point to another with north and south (unit) steps.
For example, a lattice path going from $(4,0)$ to $(0,6)$ is shown in Figure \ref{fig:SqLattice}.
Conventionally the two endpoints of a lattice path may coincide with each other so that
the lattice path is an {\em empty} path of no steps.
The {\em weight} of a lattice path $P$, $w(P)$, is defined to be
the product of the labels of all the edges passed by $P$.
For example, the lattice path in Figure \ref{fig:SqLattice} has the weight
$w(P) = \alpha_{4,2} \alpha_{3,4} \alpha_{2,4} \alpha_{1,5}$.
The weight of any empty lattice path is assume to be $1$.

The following theorem provides a foundation of our combinatorial interpretation of biorthogonal polynomials.

\begin{thm} \label{thm:MomentsLPaths}
  Assume that
  \begin{subequations} \label{eq:VEdgeLabels}
    \begin{align}
      \alpha_{i,j}
      & = a^{(i-j-1,0)}_{j} && \text{if $i > j$;}
      \label{eq:VEdgeLabels+} \\
      & = b^{(0,j-i)}_{i}   && \text{if $i \le j$}
      \label{eq:VEdgeLabels-}
    \end{align}
  \end{subequations}
  for each $(i-1,j) \in \mathbb{Z}_{\ge 0}^{2}$ where
  the right-hand sides are coefficients of the adjacent relations \eqref{eq:ARs} among biorthogonal polynomials.
  Let $(r,c) \in \mathbb{Z}_{\ge 0}^{2}$.
  The moments \eqref{eq:Moments} of biorthogonal polynomials then satisfy
  \begin{align} \label{eq:MomentsLPaths}
    \frac{f_{r,c}}{f_{0,c}} = \sum_{P} w(P)
  \end{align}
  where the sum ranges over
  all the lattice paths $P$ on the square lattice $\mathbb{Z}_{\ge 0}^{2}$ going from $(r,0)$ to $(0,c)$.
\end{thm}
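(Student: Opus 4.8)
The plan is to recast the identity as a coefficient extraction for the biorthogonal polynomials, to unfold that coefficient by repeatedly applying the adjacent relations \eqref{eq:ARs}, and then to recognize the resulting combinatorial sum as the sum over lattice paths. First I would fix $c$ and expand the monomial $x^{r}$ in the basis $\{P^{(0,c)}_{n}(x)\}_{n\ge 0}$ of $\mathbb{K}[x]$, say $x^{r}=\sum_{n\ge 0}\gamma_{n}P^{(0,c)}_{n}(x)$. Multiplying by $y^{c}$, applying $\mathcal{F}$, and using the orthogonality \eqref{eq:BOPOrthty} with superscript $(0,c)$ and $j=0$ — which reads $\mathcal{F}[y^{c}P^{(0,c)}_{n}(x)]=h^{(0,c)}_{n}\delta_{n,0}=f_{0,c}\,\delta_{n,0}$ (recall $h^{(0,c)}_{0}=\Delta^{(0,c)}_{1}=f_{0,c}$ by \eqref{eq:BOPNormConstDet}) — collapses the sum to $f_{r,c}=\gamma_{0}\,f_{0,c}$. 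So the whole statement reduces to proving $\gamma_{0}=\sum_{P}w(P)$.

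Next I would compute $\gamma_{0}$ by starting from $1=P^{(r,0)}_{0}(x)$ and rewriting step by step. One application of \eqref{eq:AR01} turns an expansion $\sum_{n}\beta_{n}P^{(s,c')}_{n}(x)$, after multiplication by $x$, into $\sum_{n}(\beta_{n-1}+a^{(s-1,c')}_{n}\beta_{n})P^{(s-1,c')}_{n}(x)$; one application of \eqref{eq:AR02} rewrites it, without changing the polynomial, as $\sum_{n}(\beta_{n}+b^{(s,c')}_{n+1}\beta_{n+1})P^{(s,c'+1)}_{n}(x)$. Carrying out $r$ steps of the first kind (lowering the first superscript from $r$ to $0$, which is exactly what builds the factor $x^{r}$) followed by $c$ steps of the second kind (raising the second superscript from $0$ to $c$), and writing $\beta^{(t)}_{n}$ for the coefficients after $t$ of these steps, one has $\beta^{(0)}_{n}=\delta_{n,0}$ and $\gamma_{0}=\beta^{(r+c)}_{0}$. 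Unfolding the two recurrences expresses $\gamma_{0}$ as the sum, over all sequences $0=\ell_{0},\ell_{1},\dots,\ell_{r+c}=0$ of nonnegative integers, of a product of per-step factors: at step $t\le r$, either $\ell_{t}=\ell_{t-1}$, contributing $a^{(r-t,0)}_{\ell_{t-1}}$, or $\ell_{t}=\ell_{t-1}+1$, contributing $1$; at step $t>r$, either $\ell_{t}=\ell_{t-1}$, contributing $1$, or $\ell_{t}=\ell_{t-1}-1$, contributing $b^{(0,t-1-r)}_{\ell_{t-1}}$.

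Then I would set up the bijection with lattice paths by sending the state ``step $t$, level $\ell$'' to the point $((r-t)+\ell,\ell)$ when $t\le r$ and to $(\ell,(t-r)+\ell)$ when $t\ge r$ (the two formulas agree at $t=r$, landing on the diagonal). Checking the four cases above shows that each elementary step becomes a single north or east unit step carrying exactly the prescribed edge label: the ``level unchanged'' option for $t\le r$ is the north step along the edge from $((r-t)+\ell+1,\ell)$ to $((r-t)+\ell,\ell)$, whose label is $a^{(r-t,0)}_{\ell}$ by \eqref{eq:VEdgeLabels+} since there $i-j=r-t+1>0$; the ``level increased'' option is an east step of label $1$; the ``level unchanged'' option for $t>r$ is an east step of label $1$; and the ``level decreased'' option is the north step along the edge from $(\ell,(t-1-r)+\ell)$ to $(\ell-1,(t-1-r)+\ell)$, whose label is $b^{(0,t-1-r)}_{\ell}$ by \eqref{eq:VEdgeLabels-} since there $i-j=-(t-1-r)\le 0$. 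Hence the product of per-step factors of a sequence equals $w(P)$ for the associated path $P$, which runs from $(r,0)$ to $(0,c)$ inside the first quadrant. For the inverse map I would use that a monotone (north/east) path from $(r,0)$ to $(0,c)$ satisfies $\mathrm{row}-\mathrm{col}=r-t$ after its $t$-th step, so it is forced onto the diagonal after exactly $r$ steps; its first $r$ steps (in the region $\mathrm{row}\ge\mathrm{col}$) recover $\ell_{t}=\mathrm{col}$ and its last $c$ steps (in the region $\mathrm{row}\le\mathrm{col}$) recover $\ell_{t}=\mathrm{row}$, automatically with all $\ell_{t}\ge 0$ and $\ell_{r+c}=0$. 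Summing $w(P)$ over all such $P$ then gives $\gamma_{0}$, completing the argument.

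The conceptual heart is the rewriting picture of the second step; the part needing care is the index bookkeeping in the bijection — keeping the superscripts $(i-j-1,0)$ and $(0,j-i)$ of \eqref{eq:VEdgeLabels} aligned with the superscripts $(r-t,0)$ and $(0,t-1-r)$ that the reduction actually produces — together with the observation that the forced diagonal crossing at stage $t=r$ is precisely what makes the decomposition of a path into its north-phase and east-phase canonical, so that the correspondence is a genuine bijection and not merely a surjection.
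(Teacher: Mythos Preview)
Your argument is correct and is essentially the paper's own proof: both expand $x^{r}=x^{r}P^{(r,0)}_{0}(x)$ first in the basis $\{P^{(0,0)}_{k}\}$ by $r$ applications of \eqref{eq:AR01} and then in $\{P^{(0,c)}_{k}\}$ by $c$ applications of \eqref{eq:AR02}, extract the $k=0$ coefficient via \eqref{eq:BOPOrthty}, and identify the combinatorics with paths split at the main diagonal. The only cosmetic differences are that the paper runs the two inductions on the polynomials (phrasing each step as ``classify paths by their first step'') and then concatenates the two path halves at $(j,j)$, whereas you run the equivalent recurrences on the coefficient sequences $\beta^{(t)}_{n}$ and package the same diagonal decomposition as an explicit bijection; the paper also states the intermediate identity for general $n$ (its \eqref{eq:BOPsLPaths}), but only the case $n=0$ that you use is needed for the theorem.
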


\begin{proof}
  Let us assume \eqref{eq:VEdgeLabels} for labels on vertical edges.
  The formula \eqref{eq:MomentsLPaths} is then induced from
  \begin{align} \label{eq:BOPsLPaths}
    x^{r} P^{(r,0)}_{n}(x) = \sum_{k=0}^{\infty} P^{(0,c)}_{k}(x) \sum_{P^{(k)}} w(P^{(k)})
  \end{align}
  where $P^{(k)}$ in the second sum is over all the lattice paths going from $(r+n,n)$ to $(k,c+k)$.
  (The first sum with respect to $k = 0,1,2,\dots$ is actually a finite sum because,
  if $k > r+n$ or $k < n-c$, there are no lattice paths $P^{(k)}$ going from $(r+n,n)$ to $(k,c+k)$ and hence
  $\sum_{P^{(k)}} w(P^{(k)}) = 0$.)
  Indeed, when $n = 0$, we multiply $y^{c}$ to the both sides of \eqref{eq:BOPsLPaths} and
  apply $\mathcal{F}$ to obtain
  \begin{align}
    f_{r,c} = h^{(0,c)}_{0} \sum_{P^{(0)}} w(P^{(0)})
  \end{align}
  from the orthogonality \eqref{eq:BOPOrthty} where $P^{(0)}$ goes from $(r,0)$ to $(0,c)$.
  The last equation is equivalent to \eqref{eq:MomentsLPaths} since
  $h^{(0,c)}_{0} = f_{0,c}$ from \eqref{eq:BOPNormConstDet}.

  We now prove \eqref{eq:BOPsLPaths}.
  We first show that
  \begin{align} \label{eq:BOPsLPaths+}
    x^{r} P^{(r,0)}_{n}(x) = \sum_{j=0}^{\infty} P^{(0,0)}_{j}(x) \sum_{P^{(j)}_{+}} w(P^{(j)}_{+})
  \end{align}
  where $P^{(j)}_{+}$ runs over all the lattice paths going from $(r+n,n)$ to $(j,j)$ on the main diagonal.
  The formula \eqref{eq:BOPsLPaths+} can be proven by induction with respect to $r = 0,1,2,\dots$ as follows.
  If $r=0$ the formula \eqref{eq:BOPsLPaths+} reads
  \begin{align} \label{eq:BOPsLPaths+0}
    P^{(0,0)}_{n}(x) = P^{(0,0)}_{n}(x) w(P^{(n)}_{+})
  \end{align}
  where $P^{(n)}_{+}$ is the unique (empty) lattice path going from and to $(n,n)$.
  The equality \eqref{eq:BOPsLPaths+0} is surely true since $w(P^{(n)}_{+}) = 1$.
  Assume that $r \ge 1$.
  From the adjacent relation \eqref{eq:AR01} we then have
  \begin{align}
    x^{r} P^{(r,0)}_{n}(x) = x^{r-1} P^{(r-1,0)}_{n+1}(x) + \alpha_{r+n,n} x^{r-1} P^{(r-1,0)}_{n}(x)
  \end{align}
  where $\alpha_{r+n,n} = a^{(r-1,0)}_{n}$ from \eqref{eq:VEdgeLabels+}.
  The assumption of induction yields that
  \begin{align} \label{eq:BOPsLPaths+01}
    x^{r} P^{(r,0)}_{n}(x)
    = \sum_{j=0}^{\infty} P^{(0,0)}_{j}(x)
    \left( \sum_{P'^{(j)}_{+}} w(P'^{(j)}_{+}) + \alpha_{r+n,n} \sum_{P''^{(j)}_{+}} w(P''^{(j)}_{+}) \right)
  \end{align}
  where $P'^{(j)}_{+}$ and $P''^{(j)}_{+}$ run over
  all the lattice paths going from $(r+n,n+1)$ to $(j,j)$ and
  those from $(r+n-1,n)$ to $(j,j)$ respectively.
  The lattice paths going from $(r+n,n)$ to $(j,j)$ are classified into two classes:
  those starting with an east step, labelled by $1$, followed by
  a lattice path going from $(r+n,n+1)$ to $(j,j)$;
  those starting with a north step, labelled by $\alpha_{r+n,n}$, followed by
  a lattice path going from $(r+n-1,n)$ to $(j,j)$, see Figure \ref{fig:Classify}.
  \begin{figure}
    \centering
    \includegraphics{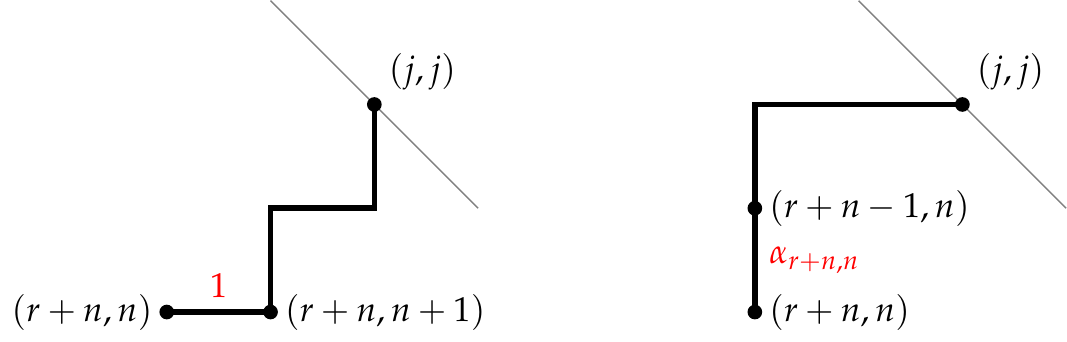}
    \caption{Classification of lattice paths going from $(r+n,n)$ to $(j,j)$ into
      those starting by an east step labelled by $1$ (left) and
      those starting by a north step labelled by $\alpha_{r+n,n}$ (right).}
    \label{fig:Classify}
  \end{figure}
  We can thereby unify the two sums for $P'^{(j)}_{+}$ and $P''^{(j)}_{+}$ in \eqref{eq:BOPsLPaths+01} into
  that for $P^{(j)}_{+}$ going from $(r+n,n)$ to $(j,j)$:
  \begin{align}
    \sum_{P'^{(j)}_{+}} w(P'^{(j)}_{+}) + \alpha_{r+n,n} \sum_{P''^{(j)}_{+}} w(P''^{(j)}_{+})
    = \sum_{P^{(j)}_{+}} w(P^{(j)}_{+}).
  \end{align}
  We thus obtain \eqref{eq:BOPsLPaths+}.
  We can show in a similar way that
  \begin{align} \label{eq:BOPsLPaths-}
    P^{(0,0)}_{j}(x) = \sum_{k=0}^{\infty} P^{(0,c)}_{k}(x) \sum_{P^{(j,k)}_{-}} w(P^{(j,k)}_{-})
  \end{align}
  by using \eqref{eq:AR02} and \eqref{eq:VEdgeLabels-} where
  $P^{(j,k)}_{-}$ runs over all the lattice paths going from $(j,j)$ to $(k,c+k)$.
  Substituting \eqref{eq:BOPsLPaths-} for \eqref{eq:BOPsLPaths+} we get
  \begin{align} \label{eq:BOPsLPaths00}
    x^{r} P^{(r,0)}_{n}(x) = \sum_{k=0}^{\infty} P^{(0,c)}_{k}(x)
    \sum_{j=0}^{\infty} \sum_{(P^{(j)}_{+},P^{(j,k)}_{-})} w(P^{(j)}_{+}) w(P^{(j,k)}_{-})
  \end{align}
  where $(P^{(j)}_{+},P^{(j,k)}_{-})$ ranges over all the pairs of
  a lattice path $P^{(j)}_{+}$ going from $(r+n,n)$ to $(j,j)$ and
  a lattice path $P^{(j,k)}_{-}$ going from $(j,j)$ to $(k,c+k)$.
  Note that we can concatenate $P^{(j)}_{+}$ and $P^{(j,k)}_{-}$ at $(j,j)$ to get
  a lattice path, say $P^{(j,k)}$, going from $(r+n,n)$ to $(k,c+k)$ via $(j,j)$ on the main diagonal.
  \begin{figure}
    \centering
    \includegraphics{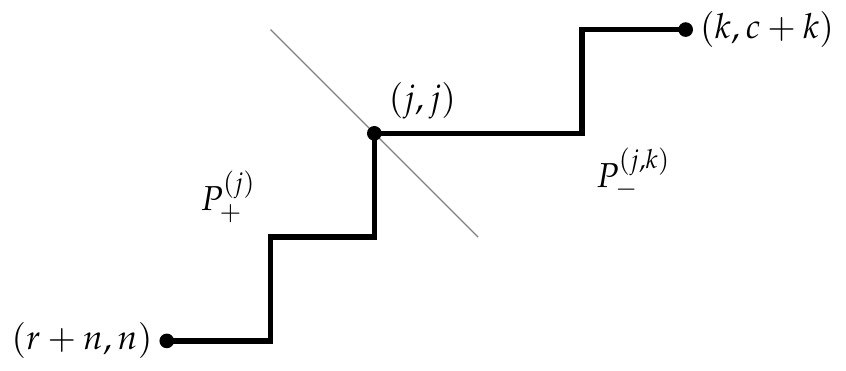}
    \caption{Concatenation of $P^{(j)}_{+}$ and $P^{(j,k)}_{-}$ at $(j,j)$ where
      $P^{(j)}_{+}$ goes from $(r+n,n)$ to $(j,j)$ while $P^{(j,k)}_{-}$ from $(j,j)$ to $(k,c+k)$.}
    \label{fig:Concatenation}
  \end{figure}
  Therefore, since $w(P^{(j)}_{+}) w(P^{(j,k)}_{-}) = w(P^{(j,k)})$,
  \begin{align} \label{eq:BOPsLPathWeights00}
    \sum_{j=0}^{\infty} \sum_{(P^{(j)}_{+},P^{(j,k)}_{-})} w(P^{(j)}_{+}) w(P^{(j,k)}_{-})
    = \sum_{j=0}^{\infty} \sum_{P^{(j,k)}} w(P^{(j,k)})
    = \sum_{P^{(k)}} w(P^{(k)})
  \end{align}
  where $P^{(k)}$ in the last sum runs over all the lattice paths going from $(r+n,n)$ to $(k,c+k)$.
  We thus obtain \eqref{eq:BOPsLPaths} from \eqref{eq:BOPsLPaths00} and \eqref{eq:BOPsLPathWeights00}.
  That completes the proof.
\end{proof}

Theorem \ref{thm:MomentsLPaths} provides
a combinatorial interpretation of moments of biorthogonal polynomials in terms of lattice paths on a square lattice.
The combinatorial interpretation of moments leads to
the following combinatorial interpretation of determinants of moments.
For $(r,c,n) \in \mathbb{Z}_{\ge 0}^{3}$ we define $\mathcal{LP}(r,c,n)$ to be the set of
$n$-tuples $(P_0,\dots,P_{n-1})$ of lattice paths on the square lattice $\mathbb{Z}_{\ge 0}^{2}$ such that
\begin{enumerate}[(i)]
\item
  $P_k$ goes from $(r+k,0)$ to $(0,c+k)$;
\item
  $P_0,\dots,P_{n-1}$ are {\em non-intersecting}, namely $P_j \cap P_k = \emptyset$ if $j \neq k$.
\end{enumerate}
Figure \ref{fig:NILPaths} shows an example of such an $n$-tuple $(P_0,\dots,P_{n-1}) \in \mathcal{LP}(r,c,n)$.
\begin{figure}
  \centering
  \includegraphics{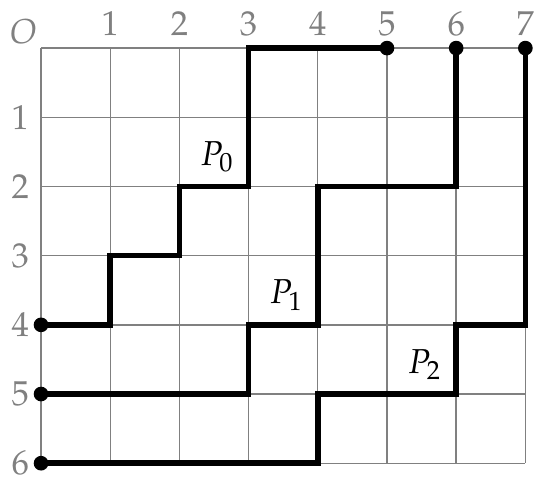}
  \caption{An $n$-tuple $(P_0,\dots,P_{n-1}) \in \mathcal{LP}(r,c,n)$ of
    non-intersecting lattice paths on the square lattice $\mathbb{Z}_{\ge 0}^{2}$ where $(r,c,n) = (4,5,3)$.}
  \label{fig:NILPaths}
\end{figure}

\begin{cor} \label{cor:DetNILPs}
  Let $(r,c,n) \in \mathbb{Z}_{\ge 0}^{3}$.
  Then
  \begin{align} \label{eq:DetNILPs}
    \frac{\Delta^{(r,c)}_{n}}{\prod_{k=0}^{n-1} f_{0,c+k}}
    = \sum_{(P_0,\dots,P_{n-1}) \in \mathcal{LP}(r,c,n)} \prod_{k=0}^{n-1} w(P_k).
  \end{align}
\end{cor}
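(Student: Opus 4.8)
The plan is to combine the single‑path interpretation of moments from Theorem~\ref{thm:MomentsLPaths} with the Lindström--Gessel--Viennot lemma. First I would recall that by Theorem~\ref{thm:MomentsLPaths}, for $(r,c)\in\mathbb{Z}_{\ge 0}^2$ and each $k$, the entry $f_{r+i,c+k}/f_{0,c+k}$ equals the generating function $\sum_P w(P)$ over lattice paths $P$ on $\mathbb{Z}_{\ge 0}^2$ from $(r+i,0)$ to $(0,c+k)$. Dividing the $k$‑th column of the matrix defining $\Delta^{(r,c)}_n = \det_{0\le i,j<n}(f_{r+i,c+j})$ by $f_{0,c+j}$ multiplies the determinant by $\bigl(\prod_{k=0}^{n-1} f_{0,c+k}\bigr)^{-1}$, so the left‑hand side of \eqref{eq:DetNILPs} is exactly $\det_{0\le i,j<n}\bigl(f_{r+i,c+j}/f_{0,c+j}\bigr)$, which is in turn $\det_{0\le i,j<n}\bigl(h_{i,j}\bigr)$ where $h_{i,j}=\sum_{P\colon (r+i,0)\to(0,c+j)} w(P)$ is the path generating function between the source $A_i=(r+i,0)$ and the sink $B_j=(0,c+j)$.

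Next I would apply the Lindström--Gessel--Viennot lemma to the acyclic directed graph given by the square lattice $\mathbb{Z}_{\ge 0}^2$ with all edges oriented in the north/east directions (so that paths only move toward the sinks), with edge weights as in Section~\ref{sec:LPs}. The lemma gives
\begin{align}
  \det_{0\le i,j<n}(h_{i,j})
  = \sum_{\sigma\in S_n} \operatorname{sgn}(\sigma)
    \sum_{(P_0,\dots,P_{n-1})} \prod_{k=0}^{n-1} w(P_k),
\end{align}
where the inner sum is over tuples with $P_k\colon A_k\to B_{\sigma(k)}$, and the signed sum collapses to the sum over non‑intersecting tuples with $\sigma=\mathrm{id}$ provided the sources and sinks are in ``compatible'' (non‑permutable) position. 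Here the sources $A_i=(r+i,0)$ and sinks $B_j=(0,c+j)$ do have this property: since every path moves only north and east and the sources lie on the bottom edge while the sinks lie on the left edge, any two paths $P_k\colon A_k\to B_{\sigma(k)}$ and $P_{k'}\colon A_{k'}\to B_{\sigma(k')}$ with $k<k'$ but $\sigma(k)>\sigma(k')$ must cross, so every non‑crossing family necessarily realizes the identity permutation. Thus the signed sum telescopes to $\sum_{(P_0,\dots,P_{n-1})\in\mathcal{LP}(r,c,n)}\prod_k w(P_k)$, which is the right‑hand side of \eqref{eq:DetNILPs}.

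The one point that requires genuine care — and which I expect to be the main obstacle — is verifying the compatibility/non‑permutability of the endpoint configuration in the LGV setup, i.e.\ that on this particular square lattice (drawn in matrix coordinates, with paths taking ``north and south steps'' as the text says, meaning the two coordinate directions toward the left/top) no two paths from $A_k,A_{k'}$ to distinct sinks can be vertex‑disjoint unless the induced permutation is the identity. This is the standard planarity argument for paths in a grid, but one must check it against the specific orientation conventions of Figure~\ref{fig:SqLattice} and the definition of $\mathcal{LP}(r,c,n)$; once that is in place, the rest is a direct invocation of LGV together with Theorem~\ref{thm:MomentsLPaths}. A minor bookkeeping check is that the $n$ sources $(r,0),(r+1,0),\dots,(r+n-1,0)$ and $n$ sinks $(0,c),(0,c+1),\dots,(0,c+n-1)$ are exactly the ones appearing in $\Delta^{(r,c)}_n$ after the column rescaling, which is immediate from the index ranges $0\le i,j<n$.
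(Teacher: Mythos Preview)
Your proposal is correct and follows essentially the same route as the paper: rewrite $\Delta^{(r,c)}_n/\prod_k f_{0,c+k}$ as $\det_{0\le i,j<n}\bigl(\sum_{P:(r+i,0)\to(0,c+j)} w(P)\bigr)$ via Theorem~\ref{thm:MomentsLPaths}, then apply the Lindstr\"om--Gessel--Viennot lemma. The paper's proof is terser, simply invoking LGV without spelling out the non-permutability check you carefully discuss; your extra verification of that point is fine and does not depart from the intended argument.
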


\begin{proof}
  Theorem \ref{thm:MomentsLPaths} implies that
  \begin{align}
    \frac{\Delta^{(r,c)}_{n}}{\prod_{k=0}^{n-1} f_{0,c+k}}
    = \det_{0 \le i,j < n} \left( \sum_{P_{i,j}} w(P_{i,j}) \right)
  \end{align}
  where $P_{i,j}$ runs over all the lattice paths going from $(r+i,0)$ to $(0,c+j)$.
  We can directly equate the last determinant with the sum in the right-hand side of \eqref{eq:DetNILPs} by means of
  Gessel--Viennot--Lindstr\"om's method \cite{Gessel-Viennot(1985),Lindstroem(1973)}, see also
  \cite[Chapter 31]{Aigner-Guenter(2014TheBook)}.
\end{proof}

\section{Little $q$-Laguerre polynomials}
\label{sec:LQLP}

We examine in Section \ref{sec:LQLP} the {\em little $q$-Laguerre polynomials} as a concrete example of
the combinatorial interpretation of (general) biorthogonal polynomials discussed in Section \ref{sec:LPs}.
The results are applied in Section \ref{sec:NFPPBSP01} to deriving
a nice formula for plane partitions with bounded size of parts which generalizes
the norm-trace generating function \eqref{eq:NormTrGF} by Stanley \cite{Stanley(1971),Stanley(1973)}.

In what follows we adopt the following conventional notations for $q$-analysis:
$q$-Pochhammer symbols
\begin{subequations}
  \begin{align}
    (a;q)_{n} = \prod_{k=1}^{\infty} \frac{1-aq^{n-k}}{1-aq^{-k}}
    {} &= \prod_{k=0}^{n-1} (1-aq^{k})     && \text{if $n > 0$,} \\
    {} &= 1                                && \text{if $n = 0$,} \\
    {} &= \prod_{k=n}^{-1} (1-aq^{k})^{-1} && \text{if $n < 0$,}
  \end{align}
\end{subequations}
with abbreviation
\begin{align}
  (a_1,\dots,a_m;q)_{n} &= \prod_{j=1}^{m} (a_j;q)_{n},
\end{align}
and basic hypergeometric series
\begin{align}
  {}_{2}\phi{}_{1}\left( \genfrac{}{}{0pt}{}{a,b}{c}; q, x \right)
  {} = \sum_{j=0}^{\infty} \frac{(a,b;q)_{j}}{(c,q;q)_{j}} x^j.
\end{align}

The (monic) {\em little $q$-Laguerre polynomial} of degree $n \in \mathbb{Z}_{\ge 0}$ is given by
\begin{align} \label{eq:LQLP}
  L_{n}(x;a;q) = (-1)^{n} q^{\frac{n(n-1)}{2}} (aq;q)_{n} \times {}
  {}_{2}\phi{}_{1}\left( \genfrac{}{}{0pt}{}{q^{-n},0}{aq}; q, qx \right).
\end{align}
The little $q$-Laguerre polynomial is a classical orthogonal polynomials which resides in
the Askey scheme, see \cite[\S 14.20]{Koekoek-Leskey-Swarttow(2010)} and the references therein.
In this paper we think of the parameters $a$ and $q$ as independent indeterminates so that
$L_{n}(x;a;q) \in \mathbb{K}[x]$ with $\mathbb{K} = \mathbb{C}(a,q)$.
(The reader can instead think of $a$ and $q$ as complex constants such that
$0 < |q| < 1$ and $0 < |aq| < 1$ as in \cite{Koekoek-Leskey-Swarttow(2010)}.)

Let us fix the linear functional $\mathcal{F}: \mathbb{K}[x^{\pm 1},y^{\pm 1}] \to \mathbb{K}$ by the moments
\begin{align} \label{eq:LQLPMoments}
  f_{i,j} = \mathcal{F}[x^i y^j] = (aq^{j+1};q)_{i}, \qquad (i,j) \in \mathbb{Z}^{2}.
\end{align}
For $(r,c) \in \mathbb{Z}^{2}$ and $n \in \mathbb{Z}_{\ge 0}$ let us write
\begin{align} \label{eq:LQLPBOPs}
  L^{(r,c)}_{n}(x;a;q) = L_{n}(x;aq^{r+c};q).
\end{align}
The orthogonality \eqref{eq:BOPOrthty} and the adjacent relations \eqref{eq:ARs} for
the little $q$-Laguerre polynomials are given as follows.

\begin{prop} \label{prop:LQLPOrthty}
  The little $q$-Laguerre polynomial $L^{(r,c)}_{n}(x;a;q)$ satisfies the orthogonality \eqref{eq:BOPOrthty} with
  $P^{(r,c)}_{n}(x) = L^{(r,c)}_{n}(x;a;q)$ and
  \begin{align} \label{eq:LQLPNormConst}
    h^{(r,c)}_{n} = f_{r,c+n} \times a^{n} q^{n(r+c+n)} (q;q)_{n}
  \end{align}
  with respect to the linear functional $\mathcal{F}$ having the moments \eqref{eq:LQLPMoments} where
  $f_{r,c+n} = \mathcal{F}[x^{r} y^{c+n}]$.
\end{prop}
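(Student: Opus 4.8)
The plan is to verify the orthogonality relation \eqref{eq:BOPOrthty} directly from the explicit moments \eqref{eq:LQLPMoments} and the explicit hypergeometric expression \eqref{eq:LQLP} for the little $q$-Laguerre polynomials. Since $L^{(r,c)}_n(x;a;q) = L_n(x;aq^{r+c};q)$ depends on $r$ and $c$ only through the combination $aq^{r+c}$, and the moments $f_{i,j} = (aq^{j+1};q)_i$ also only involve shifted parameters, I expect the whole computation to reduce to a single-parameter identity for the ordinary little $q$-Laguerre polynomials. Concretely, I would first establish the scalar claim
\begin{align} \label{eq:plan-key}
  \mathcal{F}[x^r y^{c+j} L_n(x;aq^{r+c};q)] = \sum_{k} \ell_{n,k}(aq^{r+c};q)\, \mathcal{F}[x^{r+k} y^{c+j}],
\end{align}
where $\ell_{n,k}$ are the coefficients in the expansion $L_n(x;A;q) = \sum_k \ell_{n,k}(A;q)\, x^k$ read off from \eqref{eq:LQLP}, namely $\ell_{n,k}(A;q) = (-1)^n q^{n(n-1)/2}(Aq;q)_n \cdot \frac{(q^{-n};q)_k}{(Aq,q;q)_k} q^k$ (using $(0;q)_k = 1$). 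Then $\mathcal{F}[x^{r+k}y^{c+j}] = (aq^{c+j+1};q)_{r+k}$, so the right-hand side of \eqref{eq:plan-key} becomes, after factoring out $(aq^{c+j+1};q)_r$, a ${}_2\phi_1$-type sum in $k$ with argument determined by the ratio $(aq^{c+j+1};q)_{r+k}/(aq^{c+j+1};q)_r = (aq^{c+j+r+1};q)_k$.

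The heart of the matter is then a $q$-Chu–Vandermonde summation. After the factoring, the sum over $k$ reads (up to the prefactor $(-1)^n q^{n(n-1)/2}(aq^{r+c+1};q)_n (aq^{c+j+1};q)_r$)
\begin{align}
  \sum_{k=0}^{n} \frac{(q^{-n};q)_k}{(aq^{r+c+1},q;q)_k}\, (aq^{r+c+j+1};q)_k\, q^k
  = {}_{2}\phi{}_{1}\!\left( \genfrac{}{}{0pt}{}{q^{-n},\, aq^{r+c+j+1}}{aq^{r+c+1}}; q,\, q \right),
\end{align}
which by the $q$-Chu–Vandermonde identity evaluates in closed form; the relevant specialization produces a factor proportional to $(q^{-j};q)_n$ (after simplifying the Pochhammer quotient), and this vanishes precisely when $0 \le j < n$ and is nonzero when $j = n$. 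So the Kronecker-delta structure of \eqref{eq:BOPOrthty} falls out of the vanishing of $(q^{-j};q)_n$ for $j < n$.

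For the normalization constant, I would specialize the $q$-Chu–Vandermonde evaluation at $j = n$ and carefully collect all the $q$-powers and Pochhammer symbols. The target is to show the resulting product equals $f_{r,c+n}\cdot a^n q^{n(r+c+n)}(q;q)_n$; since $f_{r,c+n} = (aq^{c+n+1};q)_r$, this is a finite product identity that should follow by routine manipulation of $(A;q)_n$ factors and the geometric-type $q$-power bookkeeping. As a cross-check, one can instead invoke \eqref{eq:BOPNormConstDet} together with the known evaluation of the moment determinant $\Delta^{(r,c)}_n$, but the direct route via the $j=n$ case of the summation is cleaner here.

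The main obstacle I anticipate is purely bookkeeping: correctly tracking the various $q$-shifts $aq^{r+c}$, $aq^{r+c+1}$, $aq^{c+j+1}$ through the Pochhammer reindexings $(A;q)_{r+k} = (A;q)_r (Aq^r;q)_k$ and through the $q^{-n}$-to-$q^{-j}$ conversion, and making sure the sign $(-1)^n$ and the $q^{n(n-1)/2}$ prefactor combine with the summation output to give exactly the stated $h^{(r,c)}_n$. There is no conceptual difficulty beyond the $q$-Chu–Vandermonde identity, which is standard \cite{Ismail(2005CQOP),Koekoek-Leskey-Swarttow(2010)}; the risk is entirely in the algebra. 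A secondary point to handle with care is that the orthogonality \eqref{eq:BOPOrthty} as stated requires the monic normalization, whereas \eqref{eq:LQLP} is already monic by construction (the coefficient of $x^n$ in $L_n(x;A;q)$ is $(-1)^n q^{n(n-1)/2}(Aq;q)_n \cdot \frac{(q^{-n};q)_n}{(Aq,q;q)_n} q^n = 1$ after simplification), so I would note this consistency check en route.
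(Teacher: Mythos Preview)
Your proposal is correct and follows essentially the same route as the paper: expand $L^{(r,c)}_n$ via \eqref{eq:LQLP}, apply $\mathcal{F}$ termwise using the moment formula \eqref{eq:LQLPMoments} and the splitting $(aq^{c+j+1};q)_{r+k}=(aq^{c+j+1};q)_r\,(aq^{r+c+j+1};q)_k$, recognize the resulting ${}_2\phi_1$, and sum it by $q$-Chu--Vandermonde to produce the factor $(q^{-j};q)_n$ that enforces the Kronecker delta and yields \eqref{eq:LQLPNormConst} at $j=n$. The only difference is presentational---you spell out the coefficient extraction and Pochhammer bookkeeping more explicitly than the paper does.
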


\begin{proof}
  From \eqref{eq:LQLP}--\eqref{eq:LQLPBOPs} we have
  \begin{align} \label{eq:LQLPOrthty00}
    \frac{\mathcal{F}[x^{r} y^{c+j} L^{(r,c)}_{n}(x;a;q)]}{(-1)^{n} q^{\frac{n(n-1)}{2}} (aq^{r+c+1};q)_{n}}
    & = (aq^{c+j+1};q)_{r} \sum_{i=0}^{n} \frac{(q^{-n},aq^{r+c+j+1};q)_{i}}{(q,aq^{r+c+1};q)_{i}} q^{i} \notag \\
    & = f_{r,c+j} \times {}_{2}\phi{}_{1}\left( \genfrac{}{}{0pt}{}{q^{-n},aq^{r+c+j+1}}{aq^{r+c+1}}; q, q \right).
  \end{align}
  We here apply the $q$-Chu--Vandermonde identity
  \begin{align} \label{eq:QCV}
    {}_{2}\phi{}_{1}\left( \genfrac{}{}{0pt}{}{q^{-n},a}{c}; q, q \right) = \frac{a^{n} (a^{-1} c;q)_{n}}{(c;q)_{n}}
  \end{align}
  \cite[Eq.~(1.11.5)]{Koekoek-Leskey-Swarttow(2010)}, \cite[Theorem 12.2.4]{Ismail(2005CQOP)} to
  the last hypergeometric series to get
  \begin{align} \label{eq:LQLPQCV}
    {}_{2}\phi{}_{1}\left( \genfrac{}{}{0pt}{}{q^{-n},aq^{r+c+j+1}}{aq^{r+c+1}}; q, q \right)
    & = \frac{a^{n} q^{n(r+c+j+1)} (q^{-j};q)_{n}}{(aq^{r+c+1};q)_{n}} \notag \\
    & = \frac{a^{n} q^{n(r+c+n+1)} (q^{-n};q)_{n}}{(aq^{r+c+1};q)_{n}} \delta_{j,n}
  \end{align}
  for $0 \le j \le n$.
  Substituting \eqref{eq:LQLPQCV} for \eqref{eq:LQLPOrthty00}
  we obtain the orthogonality with \eqref{eq:LQLPNormConst}.
\end{proof}

\begin{rem*}
  The (self-)orthogonality of the little $q$-Laguerre polynomials is usually described by
  \begin{align} \label{eq:LQLPOrthtyOnQGrid}
    \sum_{j=0}^{\infty} L_{m}(q^j;a;q) L_{n}(q^j;a;q) \frac{(aq)^{j}}{(q;q)_{j}}
    = \frac{a^{n} q^{n^2} (q;q)_{n}}{(aq^{n+1};q)_{\infty}} \delta_{m,n}, \qquad
    m,n \in \mathbb{Z}_{\ge 0},
  \end{align}
  \cite[\S 14.20]{Koekoek-Leskey-Swarttow(2010)}.
  The orthogonality \eqref{eq:LQLPOrthtyOnQGrid} can be equivalently written as
  \begin{align} \label{eq:LQLPOrthtyAsOPs}
    \mathcal{F}'[L_{m}(x;a;q) L_{n}(x;a;q)]
    = \frac{a^{n} q^{n^2} (q;q)_{n}}{(aq^{n+1};q)_{\infty}} \delta_{m,n}
  \end{align}
  with the linear functional $\mathcal{F}': \mathbb{K}[x] \to \mathbb{K}$ determined by
  the moments $\mathcal{F}'[x^i] = (aq;q)_{i}$.
  It is easy to see that \eqref{eq:LQLPOrthtyAsOPs} is equivalent to
  the orthogonality stated in Proposition \ref{prop:LQLPOrthty}.
\end{rem*}

\begin{cor} \label{cor:LQLPARs}
  The little $q$-Laguerre polynomials $L^{(r,c)}_{n}(y;a;q)$ satisfy the adjacent relations \eqref{eq:ARs} with
  $Q^{(r,c)}_{n}(y) = L^{(r,c)}_{n}(y;a;q)$ and
  \begin{subequations} \label{eq:LQLPARCfs}
    \begin{align}
      a^{(r,c)}_{n} &= q^{n} (1-aq^{r+c+n+1}), \\
      b^{(r,c)}_{n} &= a q^{r+c+n} (1-q^{n}).
    \end{align}
  \end{subequations}
\end{cor}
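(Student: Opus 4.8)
The plan is to read off the coefficients \eqref{eq:LQLPARCfs} from the general expressions \eqref{eq:ARCfA} and \eqref{eq:ARCfB} of Proposition~\ref{prop:ARs}, substituting the explicit normalization constant supplied by Proposition~\ref{prop:LQLPOrthty}. Recall from Proposition~\ref{prop:LQLPOrthty} that $L^{(r,c)}_{n}(x;a;q)$ is exactly the monic biorthogonal polynomial $P^{(r,c)}_{n}(x)$ for the functional $\mathcal{F}$ whose moments are \eqref{eq:LQLPMoments}; hence Proposition~\ref{prop:ARs} applies directly and \eqref{eq:ARs} holds, provided the moment determinants do not vanish. The latter is automatic here: over $\mathbb{K} = \mathbb{C}(a,q)$ the constant $h^{(r,c)}_{n}$ of \eqref{eq:LQLPNormConst} is a nonzero element of $\mathbb{K}$ for every $n$, and since $\Delta^{(r,c)}_{0} = 1$, an induction on $n$ through \eqref{eq:BOPNormConstDet} gives $\Delta^{(r,c)}_{n} \neq 0$ for all $n \ge 0$ (and all $(r,c) \in \mathbb{Z}^{2}$).

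Next I would write $h^{(r,c)}_{n}$ in fully explicit form, using $f_{r,c+n} = (aq^{c+n+1};q)_{r}$ from \eqref{eq:LQLPMoments}, as $h^{(r,c)}_{n} = (aq^{c+n+1};q)_{r}\, a^{n} q^{n(r+c+n)} (q;q)_{n}$. For $a^{(r,c)}_{n}$ I substitute this into \eqref{eq:ARCfA}: increasing $r$ by one multiplies the $q$-Pochhammer factor by $1 - aq^{c+n+1}q^{r}$ (via $(A;q)_{r+1} = (A;q)_{r}(1-Aq^{r})$) and the power of $q$ by $q^{n}$, while the other factors are unchanged, giving $a^{(r,c)}_{n} = q^{n}(1-aq^{r+c+n+1})$. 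For $b^{(r,c)}_{n}$ I substitute into \eqref{eq:ARCfB}: the point is that $h^{(r,c)}_{n}$ and $h^{(r,c+1)}_{n-1}$ share the \emph{same} $q$-Pochhammer factor $(aq^{c+n+1};q)_{r}$, because $(c+1)+(n-1)+1 = c+n+1$, so it cancels; what remains is $a^{n}/a^{n-1} = a$, $q^{n(r+c+n)}/q^{(n-1)(r+c+n)} = q^{r+c+n}$, and $(q;q)_{n}/(q;q)_{n-1} = 1-q^{n}$, whence $b^{(r,c)}_{n} = aq^{r+c+n}(1-q^{n})$. This establishes \eqref{eq:LQLPARCfs}.

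There is no genuine obstacle here; the proof is a one-line substitution into the ratio formulas \eqref{eq:ARCfA} and \eqref{eq:ARCfB}. The only steps that call for a little attention are the bookkeeping of $q$-Pochhammer index shifts --- in particular spotting the cancellation of $(aq^{c+n+1};q)_{r}$ in $b^{(r,c)}_{n}$, which is what makes the answer a monomial times a binomial rather than a ratio --- and the routine remark that Proposition~\ref{prop:ARs} is legitimately applicable, i.e.\ that $\Delta^{(r,c)}_{n} \neq 0$.
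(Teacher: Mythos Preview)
Your proposal is correct and follows exactly the paper's own approach: the paper's proof consists of the single sentence ``That is immediate from Propositions~\ref{prop:ARs} and~\ref{prop:LQLPOrthty},'' and you have simply carried out that substitution explicitly, with the bookkeeping done correctly.
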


\begin{proof}
  That is immediate from Propositions \ref{prop:ARs} and \ref{prop:LQLPOrthty}.
\end{proof}

The lattice path combinatorics for biorthogonal polynomials, discussed in Section \ref{sec:LPs}, is applied to
the little $q$-Laguerre polynomials as follows.
In view of Theorem \ref{thm:MomentsLPaths} and Corollary \ref{cor:LQLPARs}
we label the vertical edge between $(i,j)$ and $(i-1,j)$ in the square lattice $\mathbb{Z}_{\ge 0}^{2}$ by
\begin{subequations} \label{eq:LQLPLabelling}
  \begin{align}
    \alpha_{i,j}
    & = q^{j} (1-aq^{i})  && \text{if $i > j$;} \\
    & = a q^{j} (1-q^{i}) && \text{if $i \le j$,}
  \end{align}
\end{subequations}
and every horizontal edge by $1$.
In the rest of Section \ref{sec:LQLP} and in Section \ref{sec:NFPPBSP01}
we the weights of lattice paths are evaluated with respect to this specific labelling.

Let $(r,c) \in \mathbb{Z}_{\ge 0}^{2}$.
Let $P$ be a lattice path on the square lattice $\mathbb{Z}_{\ge 0}^{2}$ going from $(r,0)$ to $(0,c)$.
Viewing the finite region bordered by $P$ as a Young diagram we can naturally identify $P$ with
an (integer) partition of at most $r$ parts whose parts are at most $c$.
We write $\lambda(P)$ for the partition.
For example, the lattice path $P$ in Figure \ref{fig:SqLattice} is identified with
the partition $\lambda(P) = (5,4,4,2)$.

Let $\lambda$ be a partition.
The {\em norm} $|\lambda|$ is equal to the number of boxes contained in the Young diagram of $\lambda$.
The {\em Durfee square} is a maximal square that can be contained in a Young diagram.
We define $\mathsf{D}(\lambda)$ to be the size of the Durfee square of the Young diagram of $\lambda$.
Obviously $\mathsf{D}(\lambda)$ is equal to the number of boxes on the main diagonal of $\lambda$.
We note that $\mathsf{D}(\lambda(P)) = d$ if and only if
the lattice path $P$ passes through $(d,d)$ on the main diagonal.

\begin{lem} \label{lem:LQLPPathWeight}
  Let $(r,c) \in \mathbb{Z}_{\ge 0}^{2}$ and
  let $P$ be a lattice path on the square lattice $\mathbb{Z}_{\ge 0}^{2}$ going from $(r,0)$ to $(0,c)$.
  The weight $w(P)$ with respect to the labelling given by \eqref{eq:LQLPLabelling} then admits that
  \begin{subequations} \label{eq:LQLPPathWeight}
    \begin{align}
      \frac{w(P)}{(aq;q)_{r}} & = q^{|\lambda(P)|} a^{\mathsf{D}(\lambda(P))} \omega(P;a;q) \qquad \text{where} \\
      \omega(P;a;q) & = \frac{(q;q)_{\mathsf{D}(\lambda(P))}}{(aq;q)_{\mathsf{D}(\lambda(P))}}.
      \label{eq:LQLPOmega}
    \end{align}
  \end{subequations}
\end{lem}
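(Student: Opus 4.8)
The plan is to compute the weight $w(P)$ directly as the product of the vertical-edge labels $\alpha_{i,j}$ passed by $P$, and to organize that product so that the norm, the Durfee size, and the correction factor $\omega(P;a;q)$ emerge cleanly. Recall from \eqref{eq:LQLPLabelling} that a vertical edge between $(i,j)$ and $(i-1,j)$ carries the label $q^{j}(1-aq^{i})$ when $i>j$ and $aq^{j}(1-q^{i})$ when $i\le j$; horizontal edges contribute $1$. So the first step is to record exactly which vertical edges $P$ traverses. Writing $d = \mathsf{D}(\lambda(P))$, the key geometric fact noted just before the lemma is that $P$ passes through $(d,d)$ on the main diagonal; hence the vertical edges of $P$ split into those lying strictly above the diagonal (where $i>j$, contributing the ``$q^{j}(1-aq^{i})$'' labels) and those on or below it (where $i\le j$, contributing the ``$aq^{j}(1-q^{i})$'' labels). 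Because $P$ goes from $(r,0)$ to $(0,c)$ and crosses the diagonal once at $(d,d)$, the edges above the diagonal are exactly those that decrease the row coordinate from $r$ down to $d$, and the edges on/below the diagonal those that decrease it from $d$ down to $0$; there are $r-d$ of the former and $d$ of the latter.

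Second, I would separate each vertical-edge label into a ``$q$-power and $a$-power'' part and a ``$(1-\cdot)$'' part. The factor $q^{j}$ (above the diagonal) or $aq^{j}$ (on/below) contributes, after multiplying over all vertical steps of $P$, precisely $a^{d}$ times $q$ raised to the sum of the column coordinates $j$ at which $P$ makes a vertical step. That exponent of $q$ is exactly $|\lambda(P)|$: when the region bordered by $P$ is read as the Young diagram $\lambda(P)$, the number of boxes in the column at height corresponding to a north/south step at column $j$ is $j$ (this is just the standard lattice-path$\leftrightarrow$partition dictionary, with $\lambda(P)=(5,4,4,2)$ in the running example), and summing over the $r$ vertical steps gives $|\lambda(P)| = \sum_i \lambda(P)_i$. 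The product of the remaining $(1-aq^{i})$ factors over the $r-d$ above-diagonal vertical steps (where the row coordinate $i$ runs over $\{d+1,\dots,r\}$ as $P$ descends) is $\prod_{i=d+1}^{r}(1-aq^{i}) = (aq^{d+1};q)_{r-d}$, and the product of the $(1-q^{i})$ factors over the $d$ on/below-diagonal vertical steps (where $i$ runs over $\{1,\dots,d\}$) is $\prod_{i=1}^{d}(1-q^{i}) = (q;q)_{d}$. The crucial point making these products path-independent is that, regardless of the shape of $P$, the \emph{set} of row coordinates at which vertical steps occur is forced: above the diagonal it is $\{d+1,\dots,r\}$ and on/below it is $\{1,\dots,d\}$, since each vertical step lowers the row index by exactly $1$ and $P$ visits $(d,d)$.

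Third, assembling these pieces, $w(P) = a^{d}\, q^{|\lambda(P)|}\,(q;q)_{d}\,(aq^{d+1};q)_{r-d}$. Using the factorization $(aq;q)_{r} = (aq;q)_{d}\,(aq^{d+1};q)_{r-d}$ of $q$-Pochhammer symbols, dividing by $(aq;q)_{r}$ leaves
\[
  \frac{w(P)}{(aq;q)_{r}} = q^{|\lambda(P)|}\,a^{d}\,\frac{(q;q)_{d}}{(aq;q)_{d}},
\]
which is exactly \eqref{eq:LQLPPathWeight}--\eqref{eq:LQLPOmega} with $d=\mathsf{D}(\lambda(P))$. I would also note the boundary cases $d=0$ (then $P$ never leaves the axes going vertically above the diagonal; $\omega=1$, $|\lambda(P)|$ could still be positive from on/below-diagonal steps) and $d=r$ (then $(aq^{d+1};q)_{r-d}$ is empty, equal to $1$), to confirm the formulas degrade correctly.

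The main obstacle is the bookkeeping in the second step: one must be careful that the exponent of $q$ accumulated from the $q^{j}$ prefactors of the vertical-edge labels genuinely equals $|\lambda(P)|$ rather than some shifted quantity, and that the $i$-ranges $\{d+1,\dots,r\}$ and $\{1,\dots,d\}$ for the row coordinates of vertical steps are correct given the matrix-style (south/east-neighbor) coordinate convention used in the paper — in particular that $P$ going from $(r,0)$ to $(0,c)$ uses north/south unit steps so that each vertical edge it traverses is uniquely indexed by its lower endpoint's row coordinate. Once the coordinate conventions are pinned down, everything else is a routine product manipulation with $q$-Pochhammer identities.
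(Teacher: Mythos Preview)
Your proof is correct and follows essentially the same approach as the paper's: factor each vertical-edge label into its monomial part ($q^{j}$ or $aq^{j}$) and its $(1-\cdot)$ part, evaluate the two resulting products separately as $q^{|\lambda(P)|}a^{d}$ and $(q;q)_{d}\,(aq;q)_{r}/(aq;q)_{d}$, and combine. One small slip in your parenthetical aside: when $d=0$ the path must pass through $(0,0)$, which forces $\lambda(P)=\emptyset$ and hence $|\lambda(P)|=0$; also note that in the paper's matrix-style coordinates the region $i>j$ lies \emph{below} the main diagonal, not above, though your bookkeeping is consistent and unaffected by this naming reversal.
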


\begin{proof}
  We ``factor'' the labelling given by \eqref{eq:LQLPLabelling} into two distinct labellings;
  one puts on the vertical edge between $(i,j)$ and $(i-1,j)$
  \begin{subequations} \label{eq:LQLPLabel01}
    \begin{align}
      \alpha'_{i,j}
      & = q^{j}   && \text{if $i > j$;} \\
      & = a q^{j} && \text{if $i \le j$,}
    \end{align}
  \end{subequations}
  and the other
  \begin{subequations} \label{eq:LQLPLabel02}
    \begin{align}
      \alpha''_{i,j}
      & = 1-aq^{i} && \text{if $i > j$;} \\
      & = 1-q^{i}  && \text{if $i \le j$.}
    \end{align}
  \end{subequations}
  We write $w'(P)$ and $w''(P)$ for the weights of a lattice path $P$ with respect to
  the labellings given by \eqref{eq:LQLPLabel01} and \eqref{eq:LQLPLabel02} respectively.
  Obviously $w(P) = w'(P) w''(P)$.
  Let $P$ be a lattice path mentioned in the lemma.
  It is easily seen from \eqref{eq:LQLPLabel01} and \eqref{eq:LQLPLabel02} that
  \begin{align}
    w'(P) = q^{|\lambda(P)|} a^{\mathsf{D}(\lambda(P))}
  \end{align}
  and
  \begin{align}
    w''(P)
    = \left\{ \prod_{j=1}^{d} (1-q^j) \right\} \left\{ \prod_{j=d+1}^{r} (1-aq^j) \right\}
    = \frac{(q;q)_{d} (aq;q)_{r}}{(aq;q)_{d}}
  \end{align}
  respectively where $d = \mathsf{D}(\lambda(P))$.
  We therefore have \eqref{eq:LQLPPathWeight}.
\end{proof}

Theorem \ref{thm:MomentsLPaths} and Lemma \ref{lem:LQLPPathWeight} imply the following.

\begin{thm} \label{thm:LQLPMomentsLPaths}
  Let $\mathcal{F}$ be the linear functional (for the little $q$-Laguerre polynomials) determined by
  the moments \eqref{eq:LQLPMoments}.
  Let $(r,c) \in \mathbb{Z}_{\ge 0}^{2}$.
  Then
  \begin{align} \label{eq:LQLPMomentsLPaths}
    \frac{f_{r,c}}{(aq;q)_{r}} = \sum_{P} q^{|\lambda(P)|} a^{\mathsf{D}(\lambda(P))} \omega(P;a;q)
  \end{align}
  where the sum ranges over
  all the lattice paths $P$ on the square lattice $\mathbb{Z}_{\ge 0}^{2}$ going from $(r,0)$ to $(0,c)$, and
  $\omega(P;a;q)$ is given by \eqref{eq:LQLPOmega}.
\end{thm}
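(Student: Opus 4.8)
The plan is to combine Theorem~\ref{thm:MomentsLPaths}, which is the general lattice-path formula for moments of biorthogonal polynomials, with the explicit weight computation of Lemma~\ref{lem:LQLPPathWeight}. First I would recall that by Corollary~\ref{cor:LQLPARs} the coefficients of the adjacent relations for the little $q$-Laguerre polynomials are $a^{(r,c)}_{n} = q^{n}(1-aq^{r+c+n+1})$ and $b^{(r,c)}_{n} = aq^{r+c+n}(1-q^{n})$, and that these are exactly the values that produce the edge labelling \eqref{eq:LQLPLabelling} when substituted into the prescription \eqref{eq:VEdgeLabels} of Theorem~\ref{thm:MomentsLPaths}: for $i>j$ one gets $\alpha_{i,j} = a^{(i-j-1,0)}_{j} = q^{j}(1-aq^{i})$, and for $i\le j$ one gets $\alpha_{i,j} = b^{(0,j-i)}_{i} = aq^{j}(1-q^{i})$. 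Hence the hypotheses of Theorem~\ref{thm:MomentsLPaths} are met with this labelling.

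Next I would apply Theorem~\ref{thm:MomentsLPaths} directly to obtain
\begin{align*}
  \frac{f_{r,c}}{f_{0,c}} = \sum_{P} w(P),
\end{align*}
where $P$ ranges over all lattice paths on $\mathbb{Z}_{\ge 0}^{2}$ from $(r,0)$ to $(0,c)$ and $w(P)$ is computed with respect to the labelling \eqref{eq:LQLPLabelling}. From the moment formula \eqref{eq:LQLPMoments} we have $f_{0,c} = (aq^{c+1};q)_{0} = 1$, so in fact $f_{r,c} = \sum_{P} w(P)$. It then remains only to rewrite each $w(P)$ using Lemma~\ref{lem:LQLPPathWeight}, which asserts $w(P) = (aq;q)_{r}\, q^{|\lambda(P)|} a^{\mathsf{D}(\lambda(P))} \omega(P;a;q)$ with $\omega(P;a;q)$ as in \eqref{eq:LQLPOmega}. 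Dividing both sides of the moment identity by $(aq;q)_{r}$ and distributing the sum gives \eqref{eq:LQLPMomentsLPaths} verbatim.

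In short the argument is essentially a two-line substitution: instantiate the general theorem with the concrete edge weights, note the normalization $f_{0,c}=1$, and factor each path weight via the lemma. There is no real obstacle here; the only point requiring a sentence of care is the bookkeeping check that the generic labelling \eqref{eq:VEdgeLabels} specializes correctly to \eqref{eq:LQLPLabelling} under the substitution of the coefficients from Corollary~\ref{cor:LQLPARs}, and the trivial but necessary observation that $f_{0,c}=(aq^{c+1};q)_{0}=1$ so that no extra normalization factor survives. All the combinatorial work (classifying paths, identifying them with partitions, splitting the weight into a $q^{|\lambda|}a^{\mathsf{D}}$ part and an $\omega$ part) has already been carried out in the preceding results, so the proof merely assembles them.
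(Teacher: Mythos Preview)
Your proposal is correct and follows exactly the paper's own argument: the paper's proof is the one-liner ``Delete $w(P)$ from \eqref{eq:MomentsLPaths} and \eqref{eq:LQLPPathWeight} to get the result where $f_{0,c} = (aq^{c+1};q)_{0} = 1$ from \eqref{eq:LQLPMoments},'' and your write-up is simply a more detailed version of that same substitution. The extra care you take in verifying that the generic labelling \eqref{eq:VEdgeLabels} specializes to \eqref{eq:LQLPLabelling} via Corollary~\ref{cor:LQLPARs} is implicit in the paper (stated in the paragraph preceding Lemma~\ref{lem:LQLPPathWeight}), so nothing is missing or different.
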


\begin{proof}
  Delete $w(P)$ from \eqref{eq:MomentsLPaths} and \eqref{eq:LQLPPathWeight} to get the result where
  $f_{0,c} = (aq^{c+1};q)_{0} = 1$ from \eqref{eq:LQLPMoments}.
\end{proof}

The left-hand side of \eqref{eq:LQLPMomentsLPaths} is equal to
\begin{align}
  \frac{f_{r,c}}{(aq;q)_{r}}
  = \frac{(aq^{c+1};q)_{r}}{(aq;q)_{r}}
  = \frac{(aq;q)_{r+c}}{(aq;q)_{r} (aq;q)_{c}}
\end{align}
that generalizes the $q$-binomial coefficient
\begin{align}
  \genfrac{[}{]}{0pt}{}{r+c}{r}_{q} = \frac{(q;q)_{r+c}}{(q;q)_{r} (q;q)_{c}}.
\end{align}
The formula \eqref{eq:LQLPMomentsLPaths} thereby gives a generalization of the well-known formula
\begin{align}
  \genfrac{[}{]}{0pt}{}{r+c}{r}_{q} = \sum_{P} q^{|\lambda(P)|}
\end{align}
for a combinatorial interpretation of the $q$-binomial coefficients \cite[\S 1.6]{Aigner(2007CE)}.

Corollary \ref{cor:DetNILPs} and Lemma \ref{lem:LQLPPathWeight} imply the following.

\begin{thm} \label{thm:LQLPDetNILP}
  Let $\mathcal{F}$ be the linear functional (for the little $q$-Laguerre polynomials) determined by
  the moments \eqref{eq:LQLPMoments}.
  Let $(r,c,n) \in \mathbb{Z}_{\ge 0}^{3}$.
  Then
  \begin{align} \label{eq:LQLPDetNILPs}
    \frac{\Delta^{(r,c)}_{n}}{\prod_{k=0}^{n-1} (aq;q)_{r+k}}
    = \sum_{(P_0,\dots,P_{n-1}) \in \mathcal{LP}(r,c,n)}
    q^{\sum_{k=0}^{n-1} |\lambda(P_k)|} a^{\sum_{k=0}^{n-1} \mathsf{D}(\lambda(P_k))}
    \prod_{k=0}^{n-1} \omega(P_k;a;q)
  \end{align}
  where $\omega(P_k;a;q)$ is given by \eqref{eq:LQLPOmega}.
\end{thm}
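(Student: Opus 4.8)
The plan is to read off Theorem \ref{thm:LQLPDetNILP} as a direct specialization of Corollary \ref{cor:DetNILPs}, with the path weights rewritten by Lemma \ref{lem:LQLPPathWeight}. First I would observe that the labelling \eqref{eq:LQLPLabelling} used for the little $q$-Laguerre polynomials is precisely the one prescribed by \eqref{eq:VEdgeLabels} once the coefficients $a^{(r,c)}_{n}$ and $b^{(r,c)}_{n}$ from Corollary \ref{cor:LQLPARs} are substituted; indeed $a^{(i-j-1,0)}_{j} = q^{j}(1-aq^{i})$ for $i>j$ and $b^{(0,j-i)}_{i} = aq^{j}(1-q^{i})$ for $i \le j$, matching \eqref{eq:LQLPLabelling} term by term. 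Hence Theorem \ref{thm:MomentsLPaths}, and therefore Corollary \ref{cor:DetNILPs}, apply to the linear functional $\mathcal{F}$ determined by the moments \eqref{eq:LQLPMoments}, with the weights $w(\cdot)$ being exactly those coming from \eqref{eq:LQLPLabelling}.

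Next I would specialize \eqref{eq:DetNILPs}. Since $f_{0,c+k} = (aq^{c+k+1};q)_{0} = 1$ for every $k$, the normalizing product $\prod_{k=0}^{n-1} f_{0,c+k}$ on the left-hand side of \eqref{eq:DetNILPs} equals $1$, so Corollary \ref{cor:DetNILPs} gives $\Delta^{(r,c)}_{n} = \sum_{(P_0,\dots,P_{n-1}) \in \mathcal{LP}(r,c,n)} \prod_{k=0}^{n-1} w(P_k)$. In the definition of $\mathcal{LP}(r,c,n)$ the path $P_k$ runs from $(r+k,0)$ to $(0,c+k)$, so Lemma \ref{lem:LQLPPathWeight}, applied with $r$ replaced by $r+k$ and $c$ by $c+k$, yields $w(P_k) = (aq;q)_{r+k}\, q^{|\lambda(P_k)|} a^{\mathsf{D}(\lambda(P_k))} \omega(P_k;a;q)$ for each $k$. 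Multiplying these identities over $k = 0,\dots,n-1$ pulls the $q$-Pochhammer factors $\prod_{k=0}^{n-1}(aq;q)_{r+k}$ out of the sum, and dividing both sides by that product gives exactly \eqref{eq:LQLPDetNILPs}.

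I do not anticipate a genuine obstacle: the statement is a bookkeeping consequence of two already established results. The only points needing a line of care are the verification that the labelling \eqref{eq:LQLPLabelling} agrees with the general prescription \eqref{eq:VEdgeLabels} (so that Corollary \ref{cor:DetNILPs} is legitimately invoked) and the collapse $f_{0,c+k} = 1$ together with the index shift $r \mapsto r+k$, $c \mapsto c+k$ in Lemma \ref{lem:LQLPPathWeight} (so that the normalization $\prod_{k}(aq;q)_{r+k}$ appearing in the statement emerges correctly); both are immediate from \eqref{eq:LQLPMoments} and the definitions.
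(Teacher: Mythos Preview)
Your proposal is correct and follows essentially the same route as the paper: combine Corollary~\ref{cor:DetNILPs} with Lemma~\ref{lem:LQLPPathWeight}, using $f_{0,c+k}=1$ and the index shift $r\mapsto r+k$ so that the factors $(aq;q)_{r+k}$ emerge. Your explicit check that the labelling \eqref{eq:LQLPLabelling} coincides with the general prescription \eqref{eq:VEdgeLabels} via Corollary~\ref{cor:LQLPARs} is a helpful clarification, but not a departure from the paper's argument.
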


\begin{proof}
  Combine \eqref{eq:DetNILPs} and \eqref{eq:LQLPPathWeight} to get the result where
  $f_{0,c} = (aq^{c+1};q)_{0} = 1$ from \eqref{eq:LQLPMoments}.
\end{proof}

We note that the right-hand side of \eqref{eq:LQLPDetNILPs} is equal to the determinant
\begin{align}
  \frac{\Delta^{(r,c)}_{n}}{\prod_{k=0}^{n-1} (aq;q)_{r+k}}
  = \det_{0 \le i,j < n}\left( \frac{(aq;q)_{r+c+i+j}}{(aq;q)_{r+i} (aq;q)_{c+j}} \right)
\end{align}
that generalizes the $q$-binomial determinant
\begin{align}
  \det_{0 \le i,j < n}\left( \genfrac{[}{]}{0pt}{}{r+c+i+j}{r+i}_{q} \right).
\end{align}

\section{Nice formula for plane partitions with bounded size of parts, I}
\label{sec:NFPPBSP01}

It is customary to depict a plane partition $\pi = (\pi_{i,j})_{i,j = 1,2,3,\dots}$ in
a {\em three-dimensional (3D) Young diagram} in which
$\pi_{i,j}$ (unit) cubes are stacked over the position $(i,j)$.
For example, the plane partition
\begin{align} \label{eq:PPEx}
  \begin{pmatrix}
    3 & 3 & 3 & 2 & 2 \\
    3 & 3 & 3 & 1 & 1 \\
    3 & 3 & 2 & 1 & 0 \\
    3 & 2 & 0 & 0 & 0 \\
  \end{pmatrix}
\end{align}
is depicted as the 3D Young diagram shown in Figure \ref{fig:3DYD}.
\begin{figure}
  \centering
  \includegraphics{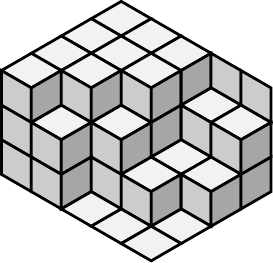}
  \caption{The 3D Young diagram corresponding to the plane partition \eqref{eq:PPEx}.}
  \label{fig:3DYD}
\end{figure}
The {\em norm} $|\pi|$ is then equal to the number of cubes stacked in the 3D Young diagram of $\pi$.

As is mentioned in Section \ref{sec:introduction} Stanley finds
the {\em norm-trace generating function} for plane partitions with {\em unbounded size of parts}
\begin{align} \label{eq:NormTrGFRep}
  \sum_{\pi \in \mathcal{P}(r,c)} q^{|\pi|} a^{\mathsf{tr}(\pi)}
  = \prod_{i=0}^{r-1} \prod_{j=0}^{c-1} (1 - aq^{i+j+1})^{-1}
\end{align}
where $\mathcal{P}(r,c)$ denote the set of plane partitions of at most $r$ rows and at most $c$ columns, and
$\mathsf{tr}(\pi)$ the {\em trace} of $\pi$ \cite{Stanley(1971),Stanley(1973)}.
Based on the results on the little $q$-Laguerre polynomials in Section \ref{sec:LQLP}
we find in Section \ref{sec:NFPPBSP01} a nice formula for plane partitions with {\em bounded size of parts}
which is analogous to \eqref{eq:NormTrGFRep} and generalizes the norm generating function \eqref{eq:NormGFBSP} for
those with bounded size of parts by MacMahon \cite{MacMahon(1916)}.

Let $(r,c,n) \in \mathbb{Z}_{\ge 0}^{3}$.
The $\mathcal{P}(r,c,n)$ denotes the set of plane partitions of at most $r$ rows and at most $c$ columns
whose parts are at most $n$.
For example, the plane partition \eqref{eq:PPEx} belongs to $\mathcal{P}(r,c,n)$ if and only if
$r \ge 4$, $c \ge 5$ and $n \ge 3$.
In other words $\pi \in \mathcal{P}(r,c,n)$ if and only if
the 3D Young diagram of $\pi$ is confined in an $r \times c \times n$ rectangular box.

In view of 3D Young diagrams it is so natural to characterize plane partitions by means of
(integer) partitions as follows.
Let $\pi$ be a plane partition.
For each $k \in \mathbb{Z}_{\ge 1}$ we define a partition $\lambda_k(\pi)$ by
the cross-section at level $k$ of the 3D Young diagram of $\pi$.
For example, the plane partition \eqref{eq:PPEx}, or the 3D Young diagram in Figure \ref{fig:3DYD}, gives rise to
the partitions
\begin{align}
  \lambda_1(\pi) = (5,5,4,2), \qquad
  \lambda_2(\pi) = (5,3,3,2), \qquad
  \lambda_3(\pi) = (3,3,2,1)
\end{align}
and $\lambda_k(\pi) = \emptyset = (0,0,0,\dots)$ for $k \ge 4$, see Figure \ref{fig:3DYDCrossSec}.
\begin{figure}
  \centering
  \includegraphics{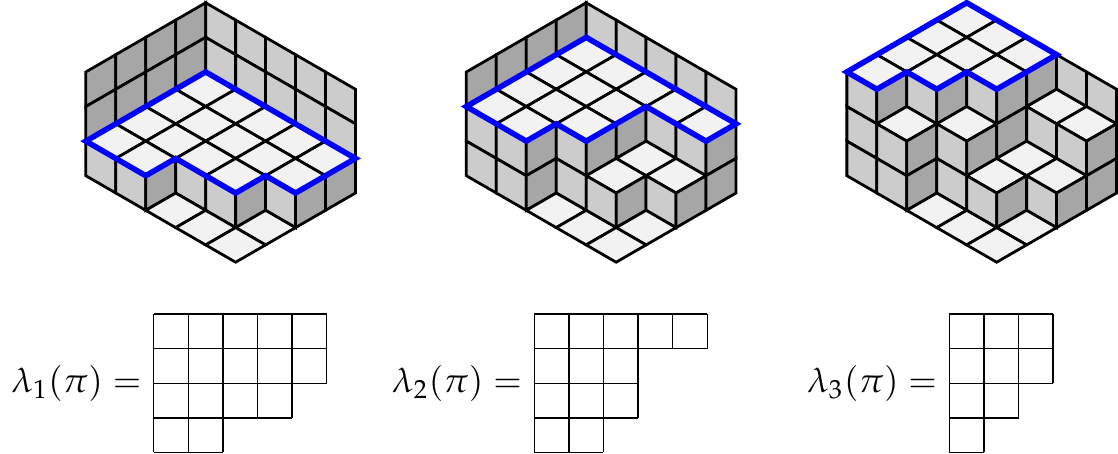}
  \caption{The cross-sections $\lambda_k(\pi)$ at level $k = 1,2,3$ of
    the 3D Young diagram $\pi$ in Figure \ref{fig:3DYD}.}
  \label{fig:3DYDCrossSec}
\end{figure}
Another characterization of $\lambda_k(\pi)$ given as:
the $i$-th part of $\lambda_k(\pi)$ is equal to the number of parts in the $i$-th row of $\pi$ which are at least $k$.
The map $\pi \mapsto (\lambda_1(\pi),\lambda_2(\pi),\lambda_3(\pi),\dots)$ is clearly a bijection between
plane partitions and sequences $(\lambda_1,\lambda_2,\lambda_3,\dots)$ of partitions such that
$\lambda_1 \supset \lambda_2 \supset \lambda_3 \supset \cdots \supset \lambda_{M} = \emptyset$ for
some $M \in \mathbb{Z}_{\ge 1}$ where
$\lambda \supset \mu$ means that $\lambda$ totally contains or coincides with $\mu$ as a Young diagram.
Obviously
\begin{align}
  |\pi| & = \sum_{k=1}^{\pi_{1,1}} |\lambda_k(\pi)|,
  \label{eq:PP-IP_Norm} \\
  \mathsf{tr}(\pi) & = \sum_{k=1}^{\pi_{1,1}} \mathsf{D}(\lambda_k(\pi))
  \label{eq:PP-IP_Trace}
\end{align}
where $\pi_{1,1}$ denotes the $(1,1)$-part of a plane partition $\pi$, and
$\mathsf{D}(\lambda)$ the size of the Durfee square of a partition $\lambda$.

We now recall a well-known bijection between $\mathcal{P}(r,c,n)$ and $\mathcal{LP}(r,c,n)$,
see Section \ref{sec:LPs} for the definition of $\mathcal{LP}(r,c,n)$.
Let $\lambda \in \mathcal{P}(r,c,n)$.
For each integer $k$, $0 \le k < n$,
draw on the square lattice $\mathbb{Z}_{\ge 0}^{2}$ a lattice path $P_k$ going from $(r+k,0)$ to $(0,c+k)$ such that
\begin{align} \label{eq:Bijection}
  \lambda(P_k) = (\underbrace{c,\dots,c}_{\text{$k$ times}},\lambda_{n-k,1}(\pi),\dots,\lambda_{n-k,r}(\pi)) + (k^{r+k})
\end{align}
where $\lambda_{n-k,i}(\pi)$ denotes the $i$-th part of the partition $\lambda_{n-k}(\pi)$, and
$(k^m) = (k,\dots,k)$ of $m$ parts equal to $k$.
Graphically speaking, the lattice path $P_k$ is obtained in the following procedure:
\begin{enumerate}[(i)]
\item
  Draw on $\mathbb{Z}_{\ge 0}^{2}$ a lattice path $P'_k$ going from $(r,0)$ to $(0,c)$ such that
  $\lambda(P'_k) = \lambda_{n-k}(\pi)$.
\item
  Translate $P'_k$ by $(k,k)$ (so that $P'_k$ goes from $(r+k,k)$ to $(k,c+k)$).
  Let us write $P''_k$ for the obtained lattice path.
\item
  Add $k$ consecutive east and north steps to the initial and terminal points of $P''_k$ respectively
  (so that $P''_k$ goes from $(r+k,0)$ to $(0,c+k)$).
  The obtained lattice path is $P_k$.
\end{enumerate}
For example, the procedure works as shown in Figure \ref{fig:Bijection} for
the plane partition \eqref{eq:PPEx} or for the 3D Young diagram in Figure \ref{fig:3DYD}.
\begin{figure}
  \centering
  \includegraphics{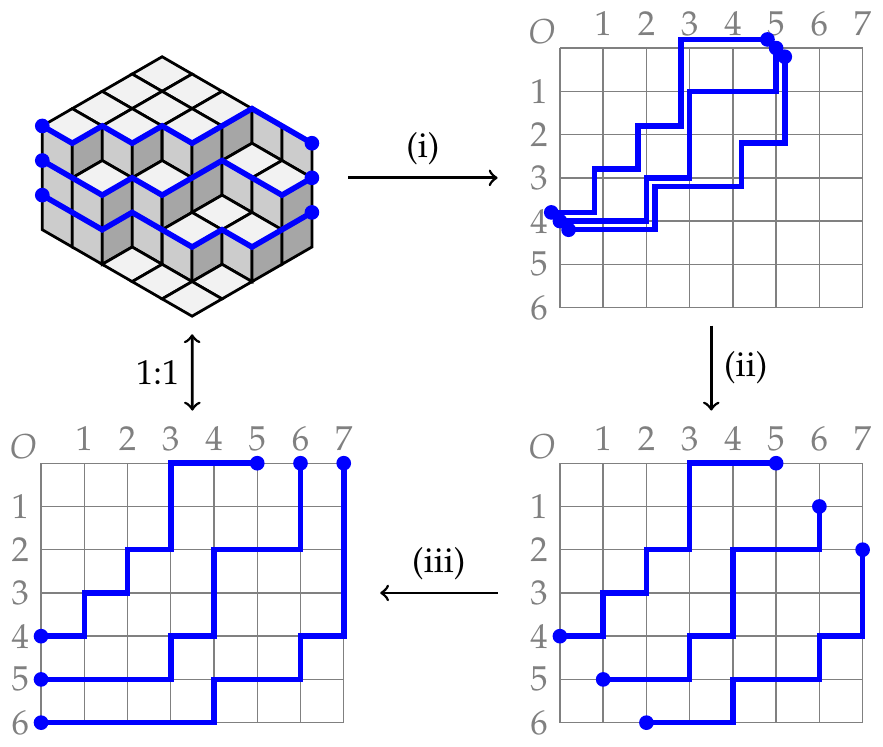}
  \caption{The bijection between $\mathcal{P}(r,c,n)$ and $\mathcal{LP}(r,c,n)$ with $(r,c,n) = (4,5,3)$.}
  \label{fig:Bijection}
\end{figure}
The relation that $\lambda_1(\pi) \supset \cdots \supset \lambda_n(\pi)$ guarantees that
the obtained lattice paths $P_0,\dots,P_{n-1}$ are non-intersecting and hence
$(P_0,\dots,P_{n-1}) \in \mathcal{LP}(r,c,n)$.
This procedure thus gives a map from $\mathcal{P}(r,c,n)$ to $\mathcal{LP}(r,c,n)$.

The above map from $\mathcal{P}(r,c,n)$ to $\mathcal{LP}(r,c,n)$ is invertible.
In fact, for any $(P_0,\dots,P_{n-1}) \in \mathcal{LP}(r,c,n)$,
the non-intersecting condition forces $P_k$ to start and end by $k$ consecutive east and north steps respectively.
So the inverse of (iii) in the procedure, of removing the $k$ consecutive east and north steps,
can be safely performed for any $(P_0,\dots,P_{n-1}) \in \mathcal{LP}(r,c,n)$.
There are no difficulties on the inverses of (ii) and (i), and
the non-intersecting condition for $(P_0,\dots,P_{n-1})$ guarantees that
we surely obtain a 3D Young diagram after applying the inverses of (iii), (ii) and (i).
The procedure therefore gives a bijection between $\mathcal{P}(r,c,n)$ and $\mathcal{LP}(r,c,n)$.

Suppose that a plane partition $\pi \in \mathcal{P}(r,c,n)$ and
an $n$-tuple $(P_0,\dots,P_{n-1}) \in \mathcal{LP}(r,c,n)$ of non-intersecting lattice paths
correspond to each other by the bijection.
It is immediate from \eqref{eq:Bijection} that
\begin{subequations} \label{eq:BjNormTr}
  \begin{align}
    |\lambda(P_k)| & = |\lambda_{n-k}(\pi)| + k(r+c+k), \\
    \mathsf{D}(\lambda(P_k)) & = \mathsf{D}(\lambda_{n-k}(\lambda)) + k.
  \end{align}
  Therefore
  \begin{align}
    \sum_{k=0}^{n-1} |\lambda(P_k)| & = |\pi| + \frac{n(n-1)(3r+3c+2n-1)}{6}, \\
    \sum_{k=0}^{n-1} |\mathsf{D}(\lambda(P_k))| & = \mathsf{tr}(\pi) + \frac{n(n-1)}{2}.
  \end{align}
\end{subequations}

The bijection between $\mathcal{P}(r,c,n)$ and $\mathcal{LP}(r,c,n)$, with \eqref{eq:BjNormTr}, allows us to
translate Theorem \ref{thm:LQLPDetNILP} in the language of plane partitions.

\begin{thm} \label{thm:NFPPBSO01}
  Let $(r,c,n) \in \mathbb{Z}_{\ge 0}^{3}$. Then
  \begin{subequations} \label{eq:NFPPBSO01}
    \begin{align}
      \label{eq:NFPPBSO01Sum}
      \sum_{\pi \in \mathcal{P}(r,c,n)} q^{|\pi|} a^{\mathsf{tr}(\pi)} \omega_{n}(\pi;a;q)
      & = \prod_{i=0}^{r-1} \prod_{j=0}^{c-1} \prod_{k=0}^{n-1} \frac{1-aq^{i+j+k+2}}{1-aq^{i+j+k+1}} \qquad
      \text{where} \\
      \label{eq:NFPPBSO01_Wgt}
      \omega_{n}(\pi;a;q)
      & = \prod_{k=1}^{\pi_{1,1}}
      \frac{(q^{n-k+1};q)_{\mathsf{D}(\lambda_k(\pi))}}{(aq^{n-k+1};q)_{\mathsf{D}(\lambda_k(\pi))}}
    \end{align}
  \end{subequations}
  where $\pi_{1,1}$ denotes the $(1,1)$-part of a plane partition $\pi$.
\end{thm}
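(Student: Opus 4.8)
The plan is to combine Theorem~\ref{thm:LQLPDetNILP} with the bijection between $\mathcal{P}(r,c,n)$ and $\mathcal{LP}(r,c,n)$ described above, and then evaluate the resulting determinant $\Delta^{(r,c)}_n$ in closed form using Corollary~\ref{cor:DetNILPs} together with the normalization constants $h^{(r,c)}_n$ from Proposition~\ref{prop:LQLPOrthty}. Concretely, I would proceed in three stages.

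First I would rewrite the right-hand side of \eqref{eq:LQLPDetNILPs} as a sum over plane partitions. Using the bijection $\pi \leftrightarrow (P_0,\dots,P_{n-1})$ and the translation formulae \eqref{eq:BjNormTr}, the statistics $\sum_k |\lambda(P_k)|$ and $\sum_k \mathsf{D}(\lambda(P_k))$ become $|\pi|$ and $\mathsf{tr}(\pi)$ up to the explicit additive constants $\tfrac{n(n-1)(3r+3c+2n-1)}{6}$ and $\tfrac{n(n-1)}{2}$. The product $\prod_k \omega(P_k;a;q) = \prod_k \frac{(q;q)_{\mathsf{D}(\lambda(P_k))}}{(aq;q)_{\mathsf{D}(\lambda(P_k))}}$, via $\mathsf{D}(\lambda(P_k)) = \mathsf{D}(\lambda_{n-k}(\pi)) + k$ and reindexing $k \mapsto n-k$, should become exactly the weight $\omega_n(\pi;a;q)$ in \eqref{eq:NFPPBSO01_Wgt}: the shift $(aq;q)_{d+k} = (aq;q)_k \cdot (aq^{k+1};q)_d$ and $(q;q)_{d+k} = (q;q)_k \cdot (q^{k+1};q)_d$ splits off a $\pi$-independent factor $\prod_k \frac{(q;q)_k}{(aq;q)_k}$, leaving precisely $\prod_{k=1}^{\pi_{1,1}} \frac{(q^{n-k+1};q)_{\mathsf{D}(\lambda_k(\pi))}}{(aq^{n-k+1};q)_{\mathsf{D}(\lambda_k(\pi))}}$ (note $\mathsf{D}(\lambda_k(\pi)) = 0$ for $k > \pi_{1,1}$, so the product is effectively finite and can be extended to $k=1,\dots,n$).

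Second I would evaluate $\Delta^{(r,c)}_n$. By \eqref{eq:BOPNormConstDet} we have $\Delta^{(r,c)}_n = \prod_{m=0}^{n-1} h^{(r,c)}_m$, and Proposition~\ref{prop:LQLPOrthty} gives $h^{(r,c)}_m = f_{r,c+m}\, a^m q^{m(r+c+m)} (q;q)_m = (aq^{c+m+1};q)_r\, a^m q^{m(r+c+m)} (q;q)_m$. Then $\frac{\Delta^{(r,c)}_n}{\prod_{k=0}^{n-1}(aq;q)_{r+k}}$ is a fully explicit product of $q$-Pochhammer symbols, powers of $a$ and $q$. The main computation is to massage this product — cancelling the $a$-power against $a^{\mathsf{tr}(\pi)}$'s constant shift, the $q$-power against the cubic norm-shift, and the $(q;q)_m$ factors against the $\pi$-independent piece $\prod_k \frac{(q;q)_k}{(aq;q)_k}$ split off above — into the clean double/triple product $\prod_{i=0}^{r-1}\prod_{j=0}^{c-1}\prod_{k=0}^{n-1}\frac{1-aq^{i+j+k+2}}{1-aq^{i+j+k+1}}$. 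This is the telescoping identity $\prod_{k=0}^{n-1}\frac{(aq^{c+k+1};q)_r}{(aq;q)_{r+k}} \cdot (\text{monomial}) = (\text{the triple product})$, verified by expanding both sides as products over shifted Pochhammer symbols.

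The main obstacle is bookkeeping: carefully matching the three additive/multiplicative normalization shifts (the $q$-exponent $\tfrac{n(n-1)(3r+3c+2n-1)}{6}$, the $a$-exponent $\tfrac{n(n-1)}{2}$, and the $(q;q)_n!$-type factor $\prod_{k=0}^{n-1}\frac{(q;q)_k}{(aq;q)_k}$) against the explicit form of $\prod_m h^{(r,c)}_m / \prod_k (aq;q)_{r+k}$, and confirming the resulting rational function of $q$ and $a$ telescopes to the advertised triple product. Once the algebra is organized — e.g. by writing every factor in the form $1-aq^{s}$ and counting multiplicities of each exponent $s$ on both sides — the identity falls out; everything else (the bijection, the determinant-to-non-intersecting-paths step, the path-weight factorization) has already been established in the earlier sections, so no new structural idea is needed.
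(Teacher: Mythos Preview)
Your proposal is correct and follows essentially the same route as the paper: rewrite the sum in Theorem~\ref{thm:LQLPDetNILP} over $\mathcal{LP}(r,c,n)$ as a sum over $\mathcal{P}(r,c,n)$ via the bijection and \eqref{eq:BjNormTr}, then evaluate $\Delta^{(r,c)}_n = \prod_{m=0}^{n-1} h^{(r,c)}_m$ using \eqref{eq:BOPNormConstDet} and the explicit $h^{(r,c)}_m$ from Proposition~\ref{prop:LQLPOrthty}, and finally telescope the resulting $q$-Pochhammer product to the triple product in \eqref{eq:NFPPBSO01Sum}. Your treatment of the $\omega$-factor---splitting $(q;q)_{d+k}/(aq;q)_{d+k}$ into the $\pi$-independent piece $\prod_k (q;q)_k/(aq;q)_k$ times $\omega_n(\pi;a;q)$---is in fact more explicit than the paper, which simply writes down the resulting identity \eqref{eq:NFPPBSO0100} without spelling out this splitting.
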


\begin{proof}
  The bijection between $\mathcal{P}(r,c,n)$ and $\mathcal{LP}(r,c,n)$,
  with the help of \eqref{eq:BjNormTr}, allows us to
  equivalently rewrite the formula \eqref{eq:LQLPDetNILPs} in Theorem \ref{thm:LQLPDetNILP} as follows:
  \begin{align} \label{eq:NFPPBSO0100}
    \sum_{\pi \in \mathcal{P}(r,c,n)} q^{|\pi|} a^{\mathsf{tr}(\pi)} \omega_{n}(\pi;a;q)
    = \frac{\Delta^{(r,c)}_{n}}{q^{\frac{n(n-1)(3r+3c+2n-1)}{6}} a^{\frac{n(n-1)}{2}}.
      \prod_{k=0}^{n-1} (aq^{k+1};q)_{r} (q;q)_{k}}    
  \end{align}
  Note that $\mathsf{D}(\lambda_k(\pi)) = 0$ for $k > \pi_{1,1}$.
  The proof thus amounts to the evaluation of
  the determinant $\Delta^{(r,c)}_{n}$ of moments \eqref{eq:LQLPMoments} of
  the little $q$-Laguerre polynomials.
  From \eqref{eq:BOPNormConstDet} we have
  \begin{align} \label{eq:DetByNormConst}
    \Delta^{(r,c)}_{n} = \prod_{k=0}^{n-1} h^{(r,c)}_{k}
  \end{align}
  for general biorthogonal polynomials.
  We therefore find from
  the normalization constant \eqref{eq:LQLPNormConst} for the little $q$-Laguerre polynomials that
  \begin{align}
    \Delta^{(r,c)}_{n}
    = q^{\frac{n(n-1)(3r+3c+2n-1)}{6}} a^{\frac{n(n-1)}{2}} \prod_{k=0}^{n-1} (aq^{c+k+1};q)_{r} (q;q)_{k}.
  \end{align}
  Substituting the last equation for \eqref{eq:NFPPBSO0100} we obtain
  \begin{align}
    \sum_{\pi \in \mathcal{P}(r,c,n)} q^{|\pi|} a^{\mathsf{tr}(\pi)} \omega_{n}(\pi;a;q)
    = \prod_{k=0}^{n-1} \frac{(aq^{c+k+1};q)_{r}}{(aq^{k+1};q)_{r}}.
  \end{align}
  The last product is equal to the right-hand side of \eqref{eq:NFPPBSO01Sum}.
\end{proof}

The nice formula \eqref{eq:NFPPBSO01} for plane partitions with bounded size of parts generalizes
the norm-trace generating function \eqref{eq:NormTrGFRep} for those with unbounded size of parts.
Indeed, $\mathcal{P}(r,c,n) \to \mathcal{P}(r,c)$, $\omega_{n}(\pi;a;q) \to 1$ and
\begin{align}
  \prod_{i=0}^{r-1} \prod_{j=0}^{c-1} \prod_{k=0}^{n-1} \frac{1-aq^{i+j+k+2}}{1-aq^{i+j+k+1}}
  = \prod_{i=0}^{r-1} \prod_{j=0}^{c-1} \frac{1-aq^{n+i+j+1}}{1-aq^{i+j+1}}
  \to \prod_{i=0}^{r-1} \prod_{j=0}^{c-1} (1-aq^{i+j+1})^{-1}
\end{align}
as $n \to \infty$ since $\lim_{n \to \infty} q^{n} = 0$ as a formal power series in $q$
(or as a complex number with $|q| < 1$).
The nice formula \eqref{eq:NFPPBSO01} also recovers
the norm generating function \eqref{eq:NormGFBSP} for plane partitions with bounded size of parts with $a = 1$ since
$\omega_{n}(\pi;1;q) \equiv 1$ from \eqref{eq:NFPPBSO01_Wgt}.

\section{Generalized little $q$-Laguerre polynomials}
\label{sec:GenLQLP}

We show in Section \ref{sec:GenLQLP} another concrete example of
the combinatorial interpretation of biorthogonal polynomials discussed in Section \ref{sec:LPs}.
We introduce a generalization of the little $q$-Laguerre polynomials and examine
the lattice path combinatorics of those.
The results in Section \ref{sec:GenLQLP} are utilized in Section \ref{sec:NFPPBSP02} to derive
a nice formula for plane partitions with bounded size of parts which generalizes
the trace generating function \eqref{eq:LTrGF} for those with unbounded size of parts.

In what follows we use the following notation:
For any bilateral sequence $\bm{x} = (\dots,x_{-2},x_{-1},x_0,x_1,x_2,\dots)$ and $n \in \mathbb{Z}$,
\begin{subequations}
  \begin{align}
    \bm{x}^{\overline{n}} = \prod_{k=0}^{\infty} \frac{x_{n-k}}{x_{-k}}
    & = \prod_{k=1}^{n} x_{k}        && \text{if $n > 0$;} \\
    & = 1                            && \text{if $n = 0$;} \\
    & = \prod_{k=n+1}^{0} x_{k}^{-1} && \text{if $n < 0$.}
  \end{align}
\end{subequations}

Let $a$ be an indeterminate and let
\begin{subequations}
  \begin{align}
    \bm{p} &= (\dots,p_{-2},p_{-1},p_{0},p_{1},p_{2},\dots), \\
    \bm{q} &= (\dots,q_{-2},q_{-1},q_{0},q_{1},q_{2},\dots)
  \end{align}
\end{subequations}
be bilateral sequences of indeterminates $p_i$ and $q_j$.
We write
\begin{subequations}
  \begin{align}
    \bm{p}_{m} &= (\dots,p_{m-2},p_{m-1},p_{m},p_{m+1},p_{m+2},\dots), \\
    \bm{q}_{m} &= (\dots,q_{m-2},q_{m-1},q_{m},q_{m+1},q_{m+2},\dots)
  \end{align}
\end{subequations}
for the $m$-shifted sequences where $\bm{p}_{0} = \bm{p}$ and $\bm{q}_{0} = \bm{q}$.
We define the {\em generalized little $q$-Laguerre polynomial} of degree $n \in \mathbb{Z}_{\ge 0}$ by
\begin{align} \label{eq:GenLQLP}
  \mathcal{L}_{n}(x;a;\bm{p},\bm{q}) = {}
  \sum_{i=0}^{n} x^{i} \left( \prod_{k=i}^{n-1} \bm{p}^{\overline{k}} \right)
  \sum_{i \ge \nu_{i} \ge \cdots \ge \nu_{n-1} \ge 0} \prod_{k=i}^{n-1}
  \left( a \bm{q}^{\overline{k-\nu_k}} - \frac{1}{\bm{p}^{\overline{\nu_k}}} \right)
\end{align}
where the second sum in the right-hand side is over
all the $n-i$ non-increasing nonnegative integers $\nu_i,\dots,\nu_{n-1}$ at most $i$.

The generalized little $q$-Laguerre polynomials, as the name suggests, generalize
the little $q$-Laguerre polynomials as follows.

\begin{prop} \label{prop:GenLQLP2LQLP}
  If $p_{\ell} = q_{\ell} = q$ for every $\ell$ then 
  $\mathcal{L}_{n}(x;a;\bm{p},\bm{q}) = L_n(x;aq^{-1};q)$.
\end{prop}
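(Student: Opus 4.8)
The plan is to specialize the defining formula \eqref{eq:GenLQLP} under the hypothesis $p_\ell = q_\ell = q$ for all $\ell$ and match the result against the basic hypergeometric expression \eqref{eq:LQLP} for $L_n(x;aq^{-1};q)$. First I would compute the relevant special values of the $\overline{n}$-operation: when every $x_k$ equals $q$, one gets $\bm{x}^{\overline{k}} = q^k$ for all $k \in \mathbb{Z}$ (this is immediate from the three-case definition, since $\prod_{j=1}^k q = q^k$ covers all signs of $k$). Consequently $\bm{p}^{\overline{k}} = q^k$, $\bm{q}^{\overline{k-\nu_k}} = q^{k-\nu_k}$, and $\bm{p}^{\overline{\nu_k}} = q^{\nu_k}$, so the coefficient of $x^i$ in $\mathcal{L}_n$ becomes
\begin{align}
  \left( \prod_{k=i}^{n-1} q^{k} \right)
  \sum_{i \ge \nu_i \ge \cdots \ge \nu_{n-1} \ge 0}
  \prod_{k=i}^{n-1} \left( a q^{k-\nu_k} - q^{-\nu_k} \right)
  = q^{\binom{n}{2}-\binom{i}{2}}
  \sum_{i \ge \nu_i \ge \cdots \ge \nu_{n-1} \ge 0}
  \prod_{k=i}^{n-1} q^{-\nu_k}\left( a q^{k} - 1 \right).
\end{align}

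Next I would extract the factor $\prod_{k=i}^{n-1}(aq^k - 1) = (-1)^{n-i}(a;q)_n/(a;q)_i$ from the product (it does not depend on the $\nu_k$), leaving the purely combinatorial sum $\sum q^{-\sum_k \nu_k}$ over chains $i \ge \nu_i \ge \cdots \ge \nu_{n-1} \ge 0$. This is a standard $q$-series evaluation: summing over weakly decreasing sequences of length $n-i$ bounded by $i$ with weight $q^{-\sum \nu_k}$ gives a Gaussian binomial coefficient, namely $\genfrac{[}{]}{0pt}{}{n}{i}_{q^{-1}}$ (or equivalently a shifted power of $q$ times $\genfrac{[}{]}{0pt}{}{n}{i}_{q}$), which in turn can be written via $q$-Pochhammer symbols as $(q;q)_n/((q;q)_i (q;q)_{n-i})$ up to an explicit power of $q$. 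Assembling these pieces gives the coefficient of $x^i$ in $\mathcal{L}_n(x;a;\bm{p},\bm{q})$ in closed form as a monomial in $q$ times $q$-Pochhammer symbols.

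Finally I would expand the right-hand side: from \eqref{eq:LQLP}, $L_n(x;aq^{-1};q) = (-1)^n q^{\binom{n}{2}}(a;q)_n \sum_{i=0}^n \frac{(q^{-n},0;q)_i}{(aq,q;q)_i}\big|_{a \to aq^{-1}} (qx)^i$, and using $(q^{-n};q)_i = (-1)^i q^{\binom{i}{2}-ni}(q;q)_n/(q;q)_{n-i}$ I would rewrite its $x^i$-coefficient in the same normal form (monomial in $q$ times Pochhammer symbols times $(a;q)_n/(a;q)_i$). The two coefficient formulas should then agree term by term, proving the identity. The main obstacle I anticipate is purely bookkeeping: getting every power of $q$ correct — there are several competing contributions ($q^{\binom{n}{2}-\binom{i}{2}}$ from the leading product, $q^{-\sum \nu_k}$ summed to a Gaussian binomial, $q^i$ from the $(qx)^i$ on the other side, and the $q^{\binom{i}{2}-ni}$ from rewriting $(q^{-n};q)_i$) — so I would double-check the exponent arithmetic, perhaps by verifying the $n=1$ and $n=2$ cases explicitly, and I would also sanity-check the edge sign conventions in the $\overline{n}$-operation for negative indices even though only $k \ge 0$ actually occurs in the sum.
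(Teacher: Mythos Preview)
Your proposal is correct and follows essentially the same route as the paper: specialize \eqref{eq:GenLQLP}, factor each term as $q^{-\nu_k}(aq^{k}-1)$, pull out $(-1)^{n-i}(a;q)_n/(a;q)_i$, evaluate the remaining $\nu$-sum, and match coefficients with \eqref{eq:LQLP}. The only cosmetic difference is that the paper packages the evaluation of $\sum_{\nu} q^{-\sum \nu_k}$ as the one-row case of MacMahon's formula \eqref{eq:NormGFBSP}, obtaining $(-1)^{i}q^{i(i+1)/2}(q^{-n};q)_i/(q;q)_i$ directly, whereas you name it as the Gaussian binomial $\genfrac{[}{]}{0pt}{}{n}{i}_{q^{-1}}$ and then convert; these are the same identity.
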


\begin{proof}
  Suppose that $p_{\ell} = q_{\ell} = q$ for every $\ell$.
  We then have
  \begin{multline} \label{eq:GenLQLP2LQLP00}
    \mathcal{L}_{n}(x;a;\bm{p},\bm{q})
    = (-1)^{n} q^{\frac{n(n-1)}{2}} (a;q)_{n} \\
    \times \sum_{i=0}^{n} \frac{(-x)^{i} q^{-\frac{i(i-1)}{2}}}{(a;q)_{i}}
    \sum_{i \ge \nu_{i} \ge \cdots \ge \nu_{n-1} \ge 0} q^{-\sum_{k=i}^{n-1} \nu_k}.
  \end{multline}
  The second sum in the right-hand side reads
  \begin{align} \label{eq:GenLQLP2LQLPPPs}
    \sum_{i \ge \nu_{i} \ge \cdots \ge \nu_{n-1} \ge 0} q^{-\sum_{k=i}^{n-1} \nu_k}
    = \sum_{\pi \in \mathcal{P}(1,n-i,i)} q^{-|\pi|}
    = \frac{(-1)^{i} q^{\frac{i(i+1)}{2}} (q^{-n};q)_{i}}{(q;q)_{i}}
  \end{align}
  where we used \eqref{eq:NormGFBSP}.
  We get the result from
  \eqref{eq:LQLP}, \eqref{eq:GenLQLP2LQLP00} and \eqref{eq:GenLQLP2LQLPPPs}.
\end{proof}

Before stating the orthogonality of the generalized little $q$-Laguerre polynomials we show
a summation formula which will be used to prove the orthogonality.

\begin{lem} \label{lem:GenQCV}
  Let $n \in \mathbb{Z}_{\ge 0}$.
  Let $a$, $c$, $p_1,\dots,p_{n-1}$ and $q_1,\dots,q_{n-1}$ be indeterminates.
  Then
  \begin{align} \label{eq:GenQCV}
    \sum_{i=0}^{n} \left\{ \prod_{k=0}^{i-1} \left( \frac{1}{\bm{p}^{\overline{k}}} - a \right) \right\}
    \sum_{i \ge \nu_{i} \ge \cdots \ge \nu_{n-1} \ge 0} \prod_{k=i}^{n-1}
    \left( c \bm{q}^{\overline{k-\nu_k}} - \frac{1}{\bm{p}^{\overline{\nu_k}}} \right)
    = \prod_{k=0}^{n-1} (c \bm{q}^{\overline{k}} - a).
  \end{align}
\end{lem}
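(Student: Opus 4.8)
The plan is to prove \eqref{eq:GenQCV} by induction on $n$, peeling off the outermost layer of each nested sum. For $n=0$ both sides equal the empty product $1$, so the base case is trivial. For the inductive step, I would split the analysis according to the value of $\nu_{n-1}$, which ranges over $0 \le \nu_{n-1} \le \min\{i,\nu_{n-2}\}$ (with the convention that when $i=0$ the inner sum is empty and contributes the empty product). The key structural observation is that the factor $c\bm{q}^{\overline{(n-1)-\nu_{n-1}}} - 1/\bm{p}^{\overline{\nu_{n-1}}}$ depends on $\nu_{n-1}$ in a way that, after reindexing, should telescope against the product of $(1/\bm{p}^{\overline{k}} - a)$ factors on the left. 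So first I would isolate the $k=n-1$ term of the inner product and rewrite
\begin{align*}
  c\bm{q}^{\overline{(n-1)-\nu_{n-1}}} - \frac{1}{\bm{p}^{\overline{\nu_{n-1}}}}
  = \left( c\bm{q}^{\overline{n-1}} - a \right) - \bm{q}^{\overline{(n-1)-\nu_{n-1}}}\!\left( c - \frac{c\,\bm{q}^{\overline{n-1}}}{\bm{q}^{\overline{(n-1)-\nu_{n-1}}}} \right) + \left( a - \frac{1}{\bm{p}^{\overline{\nu_{n-1}}}} \right),
\end{align*}
or some similar decomposition designed so that the first term $(c\bm{q}^{\overline{n-1}}-a)$ pulls out as exactly the new factor needed on the right-hand side, and the remaining terms reorganize into an instance of the identity with $n$ replaced by $n-1$.

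Concretely, I expect the cleanest route is to use the telescoping identity
\begin{align*}
  c\bm{q}^{\overline{(n-1)-\nu}} - \frac{1}{\bm{p}^{\overline{\nu}}}
  = \left( c\bm{q}^{\overline{n-1}} - a \right)
  + \sum_{\mu=0}^{\nu-1}\left( \text{terms involving } q_{\bullet}, p_{\bullet} \right),
\end{align*}
write $\nu_{n-1}$ in terms of a difference, and swap the order of the $\nu_{n-1}$-sum with the remaining $\nu_i,\dots,\nu_{n-2}$-sums; after summing a geometric-type telescoping sum over $\nu_{n-1}$ one should be left with the constant $c\bm{q}^{\overline{n-1}}-a$ times the statement of the lemma at level $n-1$ (with the same $a$ and $c$ but the sequences $\bm{p}$, $\bm{q}$ truncated appropriately). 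An alternative, which may be cleaner bookkeeping, is to induct instead on the innermost variable: fix $\nu_{n-1}=0$ versus $\nu_{n-1}\ge 1$, and in the latter case shift all $\nu_k \mapsto \nu_k - 1$ and $i \mapsto i-1$ simultaneously, matching the shifted problem against the original; this is the discrete analogue of the standard recursive proof of the $q$-Chu--Vandermonde identity, to which \eqref{eq:GenQCV} reduces when $p_\ell = q_\ell = q$ (and $c \rightsquigarrow aq^{r+c+j+1}$, etc., as in the proof of Proposition \ref{prop:LQLPOrthty}).

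The main obstacle will be the combinatorial bookkeeping of the nested summation bounds under reindexing: the constraint $i \ge \nu_i \ge \cdots \ge \nu_{n-1} \ge 0$ couples the summation variable $i$ of the outer sum to the ranges of all the $\nu_k$, so removing one layer (either $i=n$ versus $i<n$, or $\nu_{n-1}$) must be done carefully to ensure the leftover sum is genuinely the $(n-1)$-instance of \eqref{eq:GenQCV} with no stray terms. I would organize this by first rewriting the double sum as a single sum over the chain $n \ge i \ge \nu_i \ge \cdots \ge \nu_{n-1} \ge 0$ and treating $i$ on the same footing as the $\nu_k$'s (so that the left product $\prod_{k=0}^{i-1}(1/\bm{p}^{\overline k}-a)$ becomes a product ``below'' the chain); the symmetry between the ascending product on the left and the descending product on the right then makes the telescoping transparent. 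Once the reindexing is set up correctly, the remaining manipulation is a routine finite telescoping sum. Since the detailed verification is somewhat lengthy, the full argument is deferred to Appendix \ref{sec:GenQCV}.
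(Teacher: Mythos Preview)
Your proposal is a plan rather than a proof, and the one concrete step you commit to is wrong: in your displayed decomposition of $c\bm{q}^{\overline{(n-1)-\nu_{n-1}}} - 1/\bm{p}^{\overline{\nu_{n-1}}}$, the right-hand side simplifies to $2c\bm{q}^{\overline{n-1}} - c\bm{q}^{\overline{(n-1)-\nu_{n-1}}} - 1/\bm{p}^{\overline{\nu_{n-1}}}$, which equals the left-hand side only when $\nu_{n-1}=0$. More seriously, the ``main obstacle'' you flag is never overcome. Summing the $k=n-1$ factor over $0 \le \nu_{n-1} \le \nu_{n-2}$ leaves a quantity depending on $\nu_{n-2}$, not the constant $(c\bm{q}^{\overline{n-1}}-a)$; so you do not get the $(n-1)$-instance times that constant, and none of your suggested reindexings (shift all $\nu_k$ by one, or put $i$ on the same footing as the $\nu_k$) is actually shown to repair this. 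An induction on $n$ may well be possible, but as written there is no argument here, only a list of things one might try.

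The paper sidesteps this bookkeeping entirely by a different method: it regards both sides of \eqref{eq:GenQCV} as polynomials in $a$ of degree $n$ and proves they agree at the infinitely many points $a = 1/\bm{p}^{\overline{t}}$, $t \ge 0$. The outer product $\prod_{k<i}(1/\bm{p}^{\overline{k}}-a)$ is then a Newton basis, so evaluating the left-hand side at $a=1/\bm{p}^{\overline{t}}$ reduces to iterating the divided-difference recurrence on the inner-sum coefficients; that recurrence is the content of Fact~\ref{fact:Lem4GenQCV}, proved by a short induction on the length $m=n-i$ of the chain, not on $n$. Decoupling the induction from the outer sum over $i$ is precisely what makes the paper's argument go through where yours stalls.
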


The proof of Lemma \ref{lem:GenQCV} is given in Appendix \ref{sec:GenQCV}.

The summation formula \eqref{eq:GenQCV} generalizes the $q$-Chu--Vandermonde identity \eqref{eq:QCV}
\cite[Eq.~(1.11.5)]{Koekoek-Leskey-Swarttow(2010)}, \cite[Theorem 12.2.4]{Ismail(2005CQOP)}.
In fact, the \eqref{eq:GenQCV} recovers \eqref{eq:QCV} with
the specialized parameters $p_{\ell} = q_{\ell} = q$ for every $\ell$.
(The method used to prove Proposition \ref{prop:GenLQLP2LQLP} is also applicable to see that.)

We now state the orthogonality of the generalized little $q$-Laguerre polynomials.
Let us fix the linear functional $\mathcal{F}: \mathbb{K}[x^{\pm 1},y^{\pm 1}] \to \mathbb{K}$,
$\mathbb{K} = \mathbb{C}(a,\bm{p},\bm{q})$, by the moments
\begin{subequations} \label{eq:GenLQLPMoments}
  \begin{align}
    f_{i,j} = \mathcal{F}[x^i y^j]
    = \prod_{k=1}^{\infty}
    \frac{1 - a \bm{p}^{\overline{i-k}} \bm{q}^{\overline{j}}}{1 - a \bm{p}^{\overline{-k}} \bm{q}^{\overline{j}}}
    & = \prod_{k=0}^{i-1} (1 - a \bm{p}^{\overline{k}} \bm{q}^{\overline{j}})
    && \text{if $i > 0$;} \\
    & = 1
    && \text{if $i = 0$;} \\
    & = \prod_{k=i}^{-1} (1 - a \bm{p}^{\overline{k}} \bm{q}^{\overline{j}})^{-1}
    && \text{if $i < 0$}
  \end{align}
\end{subequations}
for $(i,j) \in \mathbb{Z}^{2}$.
For $(r,c) \in \mathbb{Z}^{2}$ and $n \in \mathbb{Z}_{\ge 0}$ let
\begin{align} \label{eq:GenQLaguerreShifted}
  \mathcal{L}^{(r,c)}_{n}(x;a;\bm{p},\bm{q})
  = \mathcal{L}_{n}(x;a \bm{p}^{\overline{r}} \bm{q}^{\overline{c}}; \bm{p}_{r},\bm{q}_{c}).
\end{align}

\begin{thm} \label{thm:GenLQLPOrthty}
  The generalized little $q$-Laguerre polynomial $\mathcal{L}^{(r,c)}_{n}(y;a;\bm{p},\bm{q})$ satisfies
  the orthogonality \eqref{eq:BOPOrthty} with $P^{(r,c)}_{n}(x) = \mathcal{L}^{(r,c)}_{n}(x;a;\bm{p},\bm{q})$ and
  \begin{align} \label{eq:GenLQLPNormConst}
    h^{(r,c)}_{n} = f_{r,c+n} \times a^{n} \prod_{k=0}^{n-1} \bm{p}^{\overline{r+k}} (\bm{q}^{\overline{c+k}}  - \bm{q}^{\overline{c+n}})
  \end{align}
  with respect to the linear functional $\mathcal{F}$ having the moments \eqref{eq:GenLQLPMoments} where
  $f_{r,c+n} = \mathcal{F}[x^{r} y^{c+n}]$.
\end{thm}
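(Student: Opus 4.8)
The plan is to expand $\mathcal{L}^{(r,c)}_{n}(x;a;\bm{p},\bm{q})$ in powers of $x$, apply $\mathcal{F}$ through its moments \eqref{eq:GenLQLPMoments}, and identify the resulting double sum (over the degree index $i$ and over the non-increasing tuples $i\ge\nu_{i}\ge\cdots\ge\nu_{n-1}\ge0$) with a specialization of the generalized $q$-Chu--Vandermonde identity \eqref{eq:GenQCV}. I would first reduce to the case $r=c=0$. By \eqref{eq:GenQLaguerreShifted} the coefficients of $\mathcal{L}^{(r,c)}_{n}(x;a;\bm{p},\bm{q})$ are those of $\mathcal{L}_{n}(x;a\bm{p}^{\overline{r}}\bm{q}^{\overline{c}};\bm{p}_{r},\bm{q}_{c})$, while applying $\mathcal{F}$ to $x^{r}y^{c+j}\cdot x^{i}$ yields $f_{r+i,c+j}=f_{r,c+j}\prod_{m=0}^{i-1}(1-a\bm{p}^{\overline{r+m}}\bm{q}^{\overline{c+j}})$. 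Since $a\bm{p}^{\overline{r+m}}\bm{q}^{\overline{c+j}}=(a\bm{p}^{\overline{r}}\bm{q}^{\overline{c}})(\bm{p}_{r})^{\overline{m}}(\bm{q}_{c})^{\overline{j}}$, the quotients $f_{r+i,c+j}/f_{r,c+j}$ are exactly the moments \eqref{eq:GenLQLPMoments} attached to the parameters $(a\bm{p}^{\overline{r}}\bm{q}^{\overline{c}},\bm{p}_{r},\bm{q}_{c})$. Hence $\mathcal{F}[x^{r}y^{c+j}\mathcal{L}^{(r,c)}_{n}(x;a;\bm{p},\bm{q})]$ equals $f_{r,c+j}$ times the $r=c=0$ quantity $\mathcal{F}'[y^{j}\mathcal{L}_{n}(x;a\bm{p}^{\overline{r}}\bm{q}^{\overline{c}};\bm{p}_{r},\bm{q}_{c})]$, where $\mathcal{F}'$ is the functional with those moments; and a short check using $(\bm{p}_{r})^{\overline{k}}=\bm{p}^{\overline{r+k}}/\bm{p}^{\overline{r}}$, $(\bm{q}_{c})^{\overline{k}}=\bm{q}^{\overline{c+k}}/\bm{q}^{\overline{c}}$ together with $f_{r,c+n}=\mathcal{F}[x^{r}y^{c+n}]$ shows that $f_{r,c+n}$ times the $r=c=0$ norm constant for the shifted parameters is precisely the $h^{(r,c)}_{n}$ of \eqref{eq:GenLQLPNormConst}. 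So it suffices to prove the theorem for $r=c=0$.

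For $r=c=0$, expanding \eqref{eq:GenLQLP} and applying $\mathcal{F}$ gives
\[
  \mathcal{F}[y^{j}\mathcal{L}_{n}(x;a;\bm{p},\bm{q})]
  =\sum_{i=0}^{n}f_{i,j}\Bigl(\prod_{k=i}^{n-1}\bm{p}^{\overline{k}}\Bigr)
  \sum_{i\ge\nu_{i}\ge\cdots\ge\nu_{n-1}\ge0}\ \prod_{k=i}^{n-1}\Bigl(a\bm{q}^{\overline{k-\nu_{k}}}-\tfrac{1}{\bm{p}^{\overline{\nu_{k}}}}\Bigr),
\]
with $f_{i,j}=\prod_{m=0}^{i-1}(1-a\bm{p}^{\overline{m}}\bm{q}^{\overline{j}})$. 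Writing $\prod_{k=i}^{n-1}\bm{p}^{\overline{k}}=\bigl(\prod_{k=0}^{n-1}\bm{p}^{\overline{k}}\bigr)\big/\prod_{k=0}^{i-1}\bm{p}^{\overline{k}}$, pulling the $i$-independent factor $\prod_{k=0}^{n-1}\bm{p}^{\overline{k}}$ out of the sum, and distributing the remaining denominator over $f_{i,j}$, the outer summand becomes $\prod_{k=0}^{i-1}(1/\bm{p}^{\overline{k}}-a\bm{q}^{\overline{j}})$, so that the double sum is exactly the left-hand side of \eqref{eq:GenQCV} with the lemma's scalars $a$ and $c$ taken to be $a\bm{q}^{\overline{j}}$ and $a$, respectively. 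Lemma~\ref{lem:GenQCV} then collapses it to $\prod_{k=0}^{n-1}(a\bm{q}^{\overline{k}}-a\bm{q}^{\overline{j}})$, and therefore
\[
  \mathcal{F}[y^{j}\mathcal{L}_{n}(x;a;\bm{p},\bm{q})]
  =a^{n}\prod_{k=0}^{n-1}\bm{p}^{\overline{k}}\bigl(\bm{q}^{\overline{k}}-\bm{q}^{\overline{j}}\bigr),\qquad 0\le j\le n.
\]
For $0\le j<n$ the factor indexed by $k=j$ vanishes, so the left-hand side is $0$; for $j=n$ nothing vanishes and, since $f_{0,n}=1$ by \eqref{eq:GenLQLPMoments}, the right-hand side is exactly $h^{(0,0)}_{n}$ as given by \eqref{eq:GenLQLPNormConst}. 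This establishes the orthogonality \eqref{eq:BOPOrthty} when $r=c=0$, and combined with the reduction above it completes the proof.

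The expansion, the application of $\mathcal{F}$, and the two algebraic rewritings are routine; the one genuinely delicate step is the index bookkeeping — checking that the shifts hidden in the $\bm{p}^{\overline{n}}$-notation, in the inner factor $\prod_{k=i}^{n-1}\bm{p}^{\overline{k}}$ of \eqref{eq:GenLQLP}, and (before the reduction) in the $m$-shifted sequences $\bm{p}_{r},\bm{q}_{c}$ all line up so that precisely the left-hand side of \eqref{eq:GenQCV} appears, with the correct specialization of its two scalar parameters and with leftover prefactor exactly $\prod_{k=0}^{n-1}\bm{p}^{\overline{k}}$. Once that alignment is verified, the statement is an immediate consequence of Lemma~\ref{lem:GenQCV}, whose own proof is carried out in Appendix~\ref{sec:GenQCV}.
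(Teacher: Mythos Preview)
Your proof is correct and follows essentially the same approach as the paper: expand $\mathcal{L}^{(r,c)}_{n}$, apply $\mathcal{F}$ via the moments, factor out $\prod_{k=0}^{n-1}\bm{p}^{\overline{k}}$ (resp.\ $\prod_{k=0}^{n-1}\bm{p}_{r}^{\overline{k}}$), and recognize the remaining double sum as the left-hand side of Lemma~\ref{lem:GenQCV} under a suitable specialization of its scalar parameters. The only cosmetic difference is that you first reduce to $r=c=0$ and then invoke the lemma with scalars $a\gets a\bm{q}^{\overline{j}}$, $c\gets a$, whereas the paper works directly at general $(r,c)$ and invokes the lemma with $a\gets a\bm{p}^{\overline{r}}\bm{q}^{\overline{c+j}}$, $c\gets a\bm{p}^{\overline{r}}\bm{q}^{\overline{c}}$, $p_{\ell}\gets p_{r+\ell}$, $q_{\ell}\gets q_{c+\ell}$; your reduction step is precisely the paper's division by $f_{r,c+j}$ reinterpreted as a parameter shift.
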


\begin{proof}
  We have from \eqref{eq:GenLQLP}, \eqref{eq:GenLQLPMoments} and \eqref{eq:GenQLaguerreShifted} that
  \begin{multline} \label{eq:GenLQLPOrthty00}
    \frac{\mathcal{F}[x^{r} y^{c+j} \mathcal{L}^{(r,c)}_{n}(x;a;\bm{p},\bm{q})]}{f_{r,c+j} \prod_{k=0}^{n-1} \bm{p}_{r}^{\overline{k}}} \\
    = \sum_{i=0}^{n} \left\{ \prod_{k=0}^{i-1} \left( \frac{1}{ \bm{p}_{r}^{\overline{k}}} - a \bm{p}^{\overline{r}} \bm{q}^{\overline{c+j}} \right) \right\} \sum_{i \ge \nu_i \ge \cdots \ge \nu_{n-1} \ge 0} \prod_{k=i}^{n-1} \left( a  \bm{p}^{\overline{r}} \bm{q}^{\overline{c}} \bm{q}_{c}^{\overline{k-\nu_k}} - \frac{1}{\bm{p}_{r}^{\overline{\nu_k}}} \right).
  \end{multline}
  Lemma \ref{lem:GenQCV} with parameters
  \begin{align}
    a \gets a \bm{p}^{\overline{r}} \bm{q}^{\overline{c+j}}, \qquad
    c \gets a \bm{p}^{\overline{r}} \bm{q}^{\overline{c}}, \qquad
    p_{\ell} \gets p_{r+\ell}, \qquad
    q_{\ell} \gets q_{c+\ell},
  \end{align}
  such that $\bm{p} \gets \bm{p}_{r}$ and $\bm{q} \gets \bm{q}_{c}$, allows us to replace
  the right-hand side of \eqref{eq:GenLQLPOrthty00} with
  \begin{align}
    \prod_{k=0}^{n-1} (a \bm{p}^{\overline{r}} \bm{q}^{\overline{c}} \cdot \bm{q}_{c}^{\overline{k}} - a \bm{p}^{\overline{r}} \bm{q}^{\overline{c+j}})
    = a^{n} \prod_{k=0}^{n-1} \bm{p}^{\overline{r}} (\bm{q}^{\overline{c+k}} - \bm{q}^{\overline{c+j}}).
  \end{align}
  We thus have the orthogonality stated in the theorem since the last product vanishes for $0 \le j < n$ 
\end{proof}

The adjacent relations for the generalized little $q$-Laguerre polynomials are given as follows.

\begin{cor} \label{cor:GenLQLPARCfs}
  The generalized little $q$-Laguerre polynomials $\mathcal{L}^{(r,c)}_{n}(y;a;\bm{p},\bm{q})$ satisfy
  the adjacent relations \eqref{eq:ARs} with $P^{(r,c)}_{n}(x) = \mathcal{L}^{(r,c)}_{n}(x;a;\bm{p},\bm{q})$ and
  \begin{subequations} \label{eq:GenLQLPARCfs}
    \begin{align}
      a^{(r,c)}_{n} &= \bm{p}_{r}^{\overline{n}} (1 - a \bm{p}^{\overline{r}} \bm{q}^{\overline{c+n}}), \\
      b^{(r,c)}_{n} &= a \bm{p}^{\overline{r+n-1}} \bm{q}^{\overline{c}} (1 - \bm{q}_{c}^{\overline{n}}).
    \end{align}
  \end{subequations}
\end{cor}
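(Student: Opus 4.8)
The plan is to derive Corollary~\ref{cor:GenLQLPARCfs} as a direct consequence of Proposition~\ref{prop:ARs} together with the explicit normalization constant~\eqref{eq:GenLQLPNormConst} established in Theorem~\ref{thm:GenLQLPOrthty}, exactly as Corollary~\ref{cor:LQLPARs} was obtained from Proposition~\ref{prop:ARs} and Proposition~\ref{prop:LQLPOrthty}. Since Theorem~\ref{thm:GenLQLPOrthty} shows that $\mathcal{L}^{(r,c)}_{n}(x;a;\bm{p},\bm{q})$ is (up to monic normalization) the biorthogonal polynomial $P^{(r,c)}_{n}(x)$ for the functional $\mathcal{F}$ with moments~\eqref{eq:GenLQLPMoments}, Proposition~\ref{prop:ARs} immediately gives that these polynomials satisfy~\eqref{eq:ARs} with $a^{(r,c)}_{n} = h^{(r+1,c)}_{n}/h^{(r,c)}_{n}$ and $b^{(r,c)}_{n} = h^{(r,c)}_{n}/h^{(r,c+1)}_{n-1}$. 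So everything reduces to substituting~\eqref{eq:GenLQLPNormConst} into these two ratios and simplifying.

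First I would record from~\eqref{eq:GenLQLPNormConst} the three instances needed: $h^{(r,c)}_{n}$, $h^{(r+1,c)}_{n}$, and $h^{(r,c+1)}_{n-1}$. For the $a$-coefficient, forming $h^{(r+1,c)}_{n}/h^{(r,c)}_{n}$ the factors $a^{n}$ and $\prod_{k=0}^{n-1}(\bm{q}^{\overline{c+k}} - \bm{q}^{\overline{c+n}})$ cancel outright, and the ratio of the moment prefactors $f_{r+1,c+n}/f_{r,c+n}$ is, by~\eqref{eq:GenLQLPMoments}, exactly $1 - a\bm{p}^{\overline{r}}\bm{q}^{\overline{c+n}}$ (one extra factor in the product defining $f_{r+1,c+n}$ over $f_{r,c+n}$); meanwhile $\prod_{k=0}^{n-1}\bm{p}^{\overline{r+1+k}}\big/\prod_{k=0}^{n-1}\bm{p}^{\overline{r+k}}$ telescopes — writing $\bm{p}^{\overline{r+k}} = \bm{p}^{\overline{r}}\bm{p}_{r}^{\overline{k}}$, this ratio becomes $\prod_{k=0}^{n-1}\bm{p}_{r}^{\overline{k+1}}/\bm{p}_{r}^{\overline{k}} = \bm{p}_{r}^{\overline{n}}$. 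Multiplying gives $a^{(r,c)}_{n} = \bm{p}_{r}^{\overline{n}}(1 - a\bm{p}^{\overline{r}}\bm{q}^{\overline{c+n}})$, as claimed.

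For the $b$-coefficient I would form $h^{(r,c)}_{n}/h^{(r,c+1)}_{n-1}$ and check each piece: the power of $a$ contributes $a^{n}/a^{n-1} = a$; the moment prefactors give $f_{r,c+n}/f_{r,c+1+(n-1)} = f_{r,c+n}/f_{r,c+n} = 1$; the $\bm{p}$-products give $\prod_{k=0}^{n-1}\bm{p}^{\overline{r+k}}\big/\prod_{k=0}^{n-2}\bm{p}^{\overline{r+k}} = \bm{p}^{\overline{r+n-1}}$; and the $\bm{q}$-difference products need the identity $\bm{q}^{\overline{c+k}} - \bm{q}^{\overline{c+n}} = \bm{q}^{\overline{c}}(\bm{q}_{c}^{\overline{k}} - \bm{q}_{c}^{\overline{n}})$ together with the shift $\bm{q}^{\overline{(c+1)+k}} - \bm{q}^{\overline{(c+1)+(n-1)}} = \bm{q}^{\overline{c+1}}(\bm{q}_{c+1}^{\overline{k}} - \bm{q}_{c+1}^{\overline{n-1}})$, so that after factoring $\bm{q}^{\overline{c+1}} = \bm{q}^{\overline{c}}q_{c+1}$ out of each of the $n-1$ factors in the denominator product and $\bm{q}^{\overline{c}}$ out of each of the $n$ factors in the numerator, the ratio of products over $k$ telescopes down to the single leftover factor corresponding to the top index, yielding $\bm{q}^{\overline{c}}(1 - \bm{q}_{c}^{\overline{n}})$. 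Collecting, $b^{(r,c)}_{n} = a\,\bm{p}^{\overline{r+n-1}}\bm{q}^{\overline{c}}(1 - \bm{q}_{c}^{\overline{n}})$.

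The only real care-point is the bookkeeping with the $\bm{x}^{\overline{n}}$ notation — in particular, making the shift identities $\bm{p}^{\overline{r+k}} = \bm{p}^{\overline{r}}\bm{p}_{r}^{\overline{k}}$ and $\bm{q}^{\overline{c+k}} - \bm{q}^{\overline{c+n}} = \bm{q}^{\overline{c}}(\bm{q}_{c}^{\overline{k}} - \bm{q}_{c}^{\overline{n}})$ explicit, and tracking which index telescopes to what. These follow directly from the definition of $\bm{x}^{\overline{n}}$ as a product $\prod_{k=1}^{n}x_k$ (and the shift convention $\bm{p}_m = (\dots,p_{m-1},p_m,p_{m+1},\dots)$). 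Since this is entirely routine once the ratios in Proposition~\ref{prop:ARs} are written out, I expect no genuine obstacle; the corollary is a one-line deduction, and the ``proof'' can legitimately read: \emph{That is immediate from Propositions~\ref{prop:ARs} and Theorem~\ref{thm:GenLQLPOrthty} upon inserting~\eqref{eq:GenLQLPNormConst} into~\eqref{eq:ARCfA} and~\eqref{eq:ARCfB} and using $\bm{p}^{\overline{r+k}} = \bm{p}^{\overline{r}}\bm{p}_{r}^{\overline{k}}$ and the analogous shift for $\bm{q}$.}
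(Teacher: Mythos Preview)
Your proposal is correct and follows exactly the paper's approach: the paper's proof is the single sentence ``Proposition~\ref{prop:ARs} and Theorem~\ref{thm:GenLQLPOrthty} directly yield the result,'' and your write-up is a faithful expansion of that computation, with the ratio $h^{(r+1,c)}_{n}/h^{(r,c)}_{n}$ and $h^{(r,c)}_{n}/h^{(r,c+1)}_{n-1}$ simplifications done correctly. (Note that the $\bm{q}$-product ratio in the $b$-coefficient can be seen even more quickly by reindexing the denominator $k \mapsto k-1$, leaving only the $k=0$ numerator term $\bm{q}^{\overline{c}} - \bm{q}^{\overline{c+n}} = \bm{q}^{\overline{c}}(1-\bm{q}_c^{\overline{n}})$.)
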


\begin{proof}
  Proposition \ref{prop:ARs} and Theorem \ref{thm:GenLQLPOrthty} directly yield the result.
\end{proof}

Let us apply the lattice path combinatorics for biorthogonal polynomials in Section \ref{sec:LPs} to
the generalized little $q$-Laguerre polynomials.
Corollary \ref{cor:GenLQLPARCfs} suggests
the following labelling for edges of the square lattice $\mathbb{Z}_{\ge 0}^{2}$:
The vertical edge between $(i,j)$ and $(i-1,j)$ is labelled by
\begin{subequations} \label{eq:GenLQLPLabelling}
  \begin{align}
    \alpha_{i,j}
    & = \bm{p}_{i-j-1}^{\overline{j}} (1 - a \bm{p}^{\overline{i-j-1}} \bm{q}^{\overline{j}}) && \text{if $i > j$;} \\
    & = a \bm{p}^{\overline{i-1}} \bm{q}^{\overline{j-i}} (1 - \bm{q}_{j-i}^{\overline{i}})   && \text{if $i \le j$}
  \end{align}
\end{subequations}
while every horizontal edge by $1$.
We consider in the rest of this section and in Section \ref{sec:NFPPBSP02}
the weights of lattice paths with respect to this labelling.

Let $\lambda$ be a partition.
For each $\ell \in \mathbb{Z}$ we define $\mathsf{D}_{\ell}(\lambda)$ to be
the number of boxes on the $\ell$-th diagonal of the Young diagram of $\lambda$ where
a box at $(i,j)$ is said to be on the $\ell$-th diagonal if and only if $j-i = \ell$.
Especially $\mathsf{D}_{0}(\lambda) = \mathsf{D}(\lambda)$ that measures
the size of the Durfee square of the Young diagram of $\lambda$.
For example, the Young diagram $\lambda = \lambda(P)$ of
the lattice path $P$ in Figure \ref{fig:SqLattice} satisfies that
$(\mathsf{D}_{\ell}(\lambda))_{-4 < \ell < 6} = (1,2,2,3,3,2,1,1,0)$.

Let $(r,c) \in \mathbb{Z}_{\ge 0}^{2}$ and
let $P$ be a lattice path on the square lattice $\mathbb{Z}_{\ge 0}^{2}$ going from $(r,0)$ to $(0,c)$.
If $\ell \ge 0$ (resp.~if $\ell < 0$),
$\mathsf{D}_{\ell}(\lambda(P)) = d$ if and only if $P$ passes through $(d,d+\ell)$ (resp.~through $(d-\ell,d)$).
We write $\lambda_i(P)$ for the $i$-th part of the partition $\lambda(P)$.

\begin{lem} \label{lem:GenLQLPWeight}
  Let $(r,c) \in \mathbb{Z}_{\ge 0}^{2}$ and
  let $P$ be a lattice path on the square lattice $\mathbb{Z}_{\ge 0}^{2}$ going from $(r,0)$ to $(0,c)$.
  The weight $w(P)$ with respect to the labelling given by \eqref{eq:GenLQLPLabelling} then admits that
  \begin{subequations} \label{eq:GenLQLPWeight}
    \begin{align}
      \label{eq:GenLQLPWeightW}
      &
      w(P) =
      a^{\mathsf{D}_{0}(\lambda(P))}
      \left( \prod_{i=1}^{r-1} p_i^{\mathsf{D}_{-i}(\lambda(P))} \right)
      \left( \prod_{j=1}^{c-1} q_j^{\mathsf{D}_{j}(\lambda(P))} \right)
      \omega'_{r}(P;a;\bm{p},\bm{q}) \qquad \text{where} \\
      \label{eq:GenLQLPWeightOmega'}
      &
      \omega'_{r}(P;a;\bm{p},\bm{q}) =
      \left\{ \prod_{i=1}^{d} (1 - \bm{q}_{\lambda_{i}(P)-i}^{\overline{i}}) \right\}
      \left\{ \prod_{i=d+1}^{r} (1 - a \bm{p}^{\overline{i-\lambda_{i}(P)-1}} \bm{q}^{\overline{\lambda_{i}(P)}}) \right\}
    \end{align}
  \end{subequations}
  where $d = \mathsf{D}(\lambda(P))$.
\end{lem}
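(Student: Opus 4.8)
The plan is to mimic the proof of Lemma~\ref{lem:LQLPPathWeight}: factor the edge labelling \eqref{eq:GenLQLPLabelling} into a product of two labellings whose weights recombine to give \eqref{eq:GenLQLPWeight}. Concretely, write $\alpha_{i,j} = \alpha'_{i,j}\,\alpha''_{i,j}$ where $\alpha'_{i,j} = \bm{p}_{i-j-1}^{\overline{j}}$ for $i>j$ and $\alpha'_{i,j} = a\,\bm{p}^{\overline{i-1}}\bm{q}^{\overline{j-i}}$ for $i\le j$, while $\alpha''_{i,j} = 1 - a\bm{p}^{\overline{i-j-1}}\bm{q}^{\overline{j}}$ for $i>j$ and $\alpha''_{i,j} = 1 - \bm{q}_{j-i}^{\overline{i}}$ for $i\le j$. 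Then $w(P) = w'(P)\,w''(P)$, and the two factors are computed separately: $w'(P)$ should collapse to the monomial $a^{\mathsf{D}_0(\lambda(P))} \prod_{i\ge 1} p_i^{\mathsf{D}_{-i}(\lambda(P))} \prod_{j\ge 1} q_j^{\mathsf{D}_j(\lambda(P))}$ (with the products effectively finite since $\mathsf{D}_{-i}=0$ for $i\ge r$ and $\mathsf{D}_j=0$ for $j\ge c$), and $w''(P)$ should equal $\omega'_r(P;a;\bm p,\bm q)$ as in \eqref{eq:GenLQLPWeightOmega'}.

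For the $w'$ factor I would argue by bookkeeping which vertical edges a path from $(r,0)$ to $(0,c)$ actually traverses. Recall that in these matrix-like coordinates a north step at column $j$ crossing from row $i$ to row $i-1$ is a vertical edge labelled $\alpha_{i,j}$, and that the vertical edges used by $P$ in column $j$ are exactly those with $i$ ranging over an interval determined by $\lambda(P)$; summing the exponents of each $p_k$ and $q_k$ over all used vertical edges should telescope (via the definition of $\bm x^{\overline n}$) into the diagonal-counting statistics $\mathsf{D}_\ell$. The cleanest way is probably to split the path at the main diagonal: the portion in the region $i>j$ contributes the $a$'s and $p$'s (one factor of $a$ and one of $p_{i-j}$ per box strictly below the diagonal, assembled by diagonals), and the portion in $i\le j$ contributes the remaining $a$'s and the $q$'s; combining the two counts of $a$ gives precisely $\mathsf{D}_0(\lambda(P))$, matching the fact that $\mathsf{D}(\lambda(P))$ equals the number of main-diagonal boxes.

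For the $w''$ factor, the key observation is that $\alpha''_{i,j}$ depends only on whether the vertical edge lies below or on/above the diagonal and on the ``anti-diagonal'' index $i-j$; as $P$ sweeps through its vertical edges, row $i$ takes each value $1,\dots,r$ exactly once, and for the unique vertical edge in row $i$ the column $j$ equals $\lambda_i(P)$ (after checking the off-by-one convention against the examples, e.g. $\lambda(P)=(5,4,4,2)$ in Figure~\ref{fig:SqLattice}). So $w''(P) = \prod_{i=1}^{r} \alpha''_{i,\lambda_i(P)}$, and this splits according to $i\le d$ versus $i>d$ where $d=\mathsf{D}(\lambda(P))$: for $i\le d$ one has $\lambda_i(P)\ge i$ (box $(i,i)$ is in the diagram), giving the factor $1-\bm{q}_{\lambda_i(P)-i}^{\overline{i}}$; for $i>d$ one has $\lambda_i(P)<i$, giving $1 - a\bm{p}^{\overline{i-\lambda_i(P)-1}}\bm{q}^{\overline{\lambda_i(P)}}$. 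This is exactly \eqref{eq:GenLQLPWeightOmega'}.

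The main obstacle I expect is the $w'$ computation: unlike the $q$-specialized case where $w'(P)=q^{|\lambda(P)|}a^{\mathsf{D}(\lambda(P))}$ follows from a one-line observation, here the exponents must be reorganized from ``per edge'' to ``per diagonal,'' and one has to handle the two regions $i>j$ and $i\le j$ carefully so that the $a$-exponents from both sides add up correctly to $\mathsf{D}_0$ and no spurious $p_i$ with $i\ge r$ or $q_j$ with $j\ge c$ survives. Verifying the index conventions in the definition of $\bm{x}^{\overline n}$ and of $\bm{p}_{i-j-1}$, $\bm{q}_{j-i}$ against a small worked example will be essential to get every shift right; once the diagonal reorganization is set up, the telescoping is routine.
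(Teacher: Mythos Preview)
Your approach matches the paper's: the same factorization $\alpha_{i,j}=\alpha'_{i,j}\alpha''_{i,j}$ and the same row-by-row identification $w''(P)=\prod_{i=1}^{r}\alpha''_{i,\lambda_i(P)}=\omega'_r(P;a;\bm p,\bm q)$. For $w'(P)$ the paper executes your ``per-edge to per-diagonal'' reorganization via a filling trick---expand each $\alpha'_{i,\lambda_i(P)}$ as a product of one variable per box in row $i$ of $\lambda(P)$ and observe that the entry at box $(i,j)$ depends only on the diagonal $j-i$ (namely $a$ on the main diagonal, $p_{i-j}$ below, $q_{j-i}$ above); one small correction to your sketch is that only the $i\le j$ edges contribute factors of $a$ (through $a\,\bm{p}^{\overline{i-1}}\bm{q}^{\overline{j-i}}$), while \emph{both} regions contribute $p$'s.
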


\begin{proof}
  The proof is totally parallel to that of Lemma \ref{lem:LQLPPathWeight} in Section \ref{sec:LQLP}.
  We ``factor'' the labelling \eqref{eq:GenLQLPLabelling} into two distinct labellings;
  one puts on the vertical edge between $(i,j)$ and $(i-1,j)$ the label
  \begin{subequations} \label{eq:GenLQLPLabelsFactored01}
    \begin{align}
      \alpha'_{i,j}
      & = \bm{p}_{i-j-1}^{\overline{j}}                     && \text{if $i > j$;} \\
      & = a \bm{p}^{\overline{i-1}} \bm{q}^{\overline{j-i}} && \text{if $i \le j$,}
    \end{align}
  \end{subequations}
  and the other
  \begin{subequations} \label{eq:GenLQLPLabelsFactored02}
    \begin{align}
      \alpha''_{i,j}
      & = (1 - a \bm{p}^{\overline{i-j-1}} \bm{q}^{\overline{j}}) && \text{if $i > j$;} \\
      & = (1 - \bm{q}_{j-i}^{\overline{i}})                       && \text{if $i \le j$,}
    \end{align}
  \end{subequations}
  where both the labellings put $1$ on every horizontal edge.
  For any lattice path $P$ we write
  $w'(P)$ and $w''(P)$ for the weights of $P$ with respect to the labellings
  \eqref{eq:GenLQLPLabelsFactored01} and \eqref{eq:GenLQLPLabelsFactored02} respectively.
  Obviously $w(P) = w'(P) w''(P)$.

  Let $P$ be a lattie path going from $(r,0)$ to $(0,c)$ and let
  $d = \mathsf{D}(\lambda(P)) = \mathsf{D}_{0}(\lambda(P))$.
  We then have from \eqref{eq:GenLQLPLabelsFactored01} that
  \begin{align}
    w'(P) = \left( \prod_{i=1}^{d} a \bm{p}^{\overline{i-1}} \bm{q}^{\overline{\lambda_{i}(P)-i}} \right)
    \left( \prod_{i=d+1}^{r} \bm{p}_{i-\lambda_{i}(P)-1}^{\overline{\lambda_{i}(P)}} \right).
  \end{align}
  since $P$ passes through the vertical edge between
  $(i,\lambda_i(P))$ and $(i-1,\lambda_i(P))$ for each integer $i$, $1 \le i \le r$.
  The last expression is actually equivalent to
  \begin{align} \label{eq:GenLQLPWeight01}
    w'(P) = a^{\mathsf{D}_{0}(\lambda(P))}
    \left( \prod_{i=1}^{r-1} p_i^{\mathsf{D}_{-i}(\lambda(P))} \right)
    \left( \prod_{j=1}^{c-1} q_j^{\mathsf{D}_{j}(\lambda(P))} \right).
  \end{align}
  To see this we consider to fill in the Young diagram $\lambda(P)$ by
  writing in to the $\lambda_i(P)$ boxes in the $i$-th row 
  \begin{subequations}
    \begin{align}
      & p_1,~ \dots,~ p_{i-1},~ a,~ q_1,~ \dots,~ q_{\lambda_i(P)-i} && \text{if $1 \le i \le d$;} \\
      & p_{i-\lambda_i(P)-1},~ \dots,~ p_{i-2}                       && \text{if $d < i \le r$}
    \end{align}
  \end{subequations}
  from left to right.
  For example, the lattice path in Figure \ref{fig:SqLattice} gives rise to
  the filling shown in Figure \ref{fig:Filling}.
  \begin{figure}
    \centering
    \includegraphics{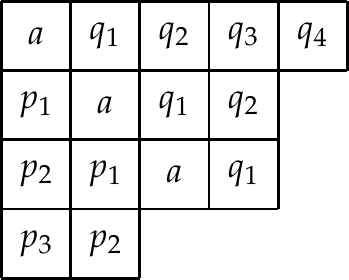}
    \caption{%
      The filling of the Young diagram of the lattice path in Figure \ref{fig:SqLattice}.
    }
    \label{fig:Filling}
  \end{figure}
  The way of filling ensures that the product of all the entries in the Young diagram is equal to $w'(P)$.
  In addition the entries $a$, $p_i$ and $q_j$ reside in the boxes on the main, $(-i)$-th and $j$-th diagonals
  respectively.
  We therefore have \eqref{eq:GenLQLPWeight01}.
  It is easy to find from \eqref{eq:GenLQLPLabelsFactored02} that
  $w''(P)$ is equal to $\omega'_{r}(P;a;\bm{p},\bm{q})$ defined by \eqref{eq:GenLQLPWeightOmega'}.
  We thus have \eqref{eq:GenLQLPWeightW} since $w(P) = w'(P) w''(P)$.
\end{proof}

Theorem \ref{thm:MomentsLPaths} and Lemma \ref{lem:GenLQLPWeight} imply the following.

\begin{thm} \label{thm:GenLQLPMomentsLPaths}
  Let $\mathcal{F}$ be the linear functional (for the generalized little $q$-Laguerre polynomials) determined by
  the moments \eqref{eq:GenLQLPMoments}.
  Let $(r,c) \in \mathbb{Z}_{\ge 0}^{2}$.
  Then
  \begin{align}
    f_{r,c} = \sum_{P}
    a^{\mathsf{D}_{0}(\lambda(P))}
    \left( \prod_{i=1}^{r-1} p_i^{\mathsf{D}_{-i}(\lambda(P))} \right)
    \left( \prod_{j=1}^{c-1} q_j^{\mathsf{D}_{j}(\lambda(P))} \right)
    \omega'_{r}(P;a;\bm{p},\bm{q})
  \end{align}
  where the sum ranges over
  all the lattice paths on the square lattice $\mathbb{Z}_{\ge 0}^{2}$ going from $(r,0)$ to $(0,c)$, and
  $\omega'_{r}(P;a;\bm{p},\bm{q})$ is defined by \eqref{eq:GenLQLPWeightOmega'}.
\end{thm}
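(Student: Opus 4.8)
The plan is to obtain the statement as an immediate consequence of Theorem \ref{thm:MomentsLPaths} and Lemma \ref{lem:GenLQLPWeight}, following exactly the template by which Theorem \ref{thm:LQLPMomentsLPaths} was derived in Section \ref{sec:LQLP} from Theorem \ref{thm:MomentsLPaths} and Lemma \ref{lem:LQLPPathWeight}.

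First I would verify that the labelling \eqref{eq:GenLQLPLabelling} of the vertical edges of the square lattice $\mathbb{Z}_{\ge 0}^{2}$ is precisely the one prescribed by the hypothesis \eqref{eq:VEdgeLabels} of Theorem \ref{thm:MomentsLPaths}. By the adjacent-relation coefficients computed in Corollary \ref{cor:GenLQLPARCfs}, one has $a^{(i-j-1,0)}_{j} = \bm{p}_{i-j-1}^{\overline{j}}(1 - a\bm{p}^{\overline{i-j-1}}\bm{q}^{\overline{j}})$ when $i > j$ and $b^{(0,j-i)}_{i} = a\bm{p}^{\overline{i-1}}\bm{q}^{\overline{j-i}}(1 - \bm{q}_{j-i}^{\overline{i}})$ when $i \le j$, and these agree verbatim with the two cases of \eqref{eq:GenLQLPLabelling}. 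This check is purely a matter of bookkeeping with the shifted-sequence notation $\bm{p}_{m}$, $\bm{q}_{m}$ and the products $\bm{x}^{\overline{n}}$; the only thing to be careful about is tracking the index substitutions $(r,c,n) \mapsto (i-j-1,0,j)$ and $(r,c,n) \mapsto (0,j-i,i)$ dictated by \eqref{eq:VEdgeLabels}.

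Granting this identification, Theorem \ref{thm:MomentsLPaths} applies with $P^{(r,c)}_{n}(x) = \mathcal{L}^{(r,c)}_{n}(x;a;\bm{p},\bm{q})$ and gives $f_{r,c}/f_{0,c} = \sum_{P} w(P)$, where the sum ranges over all lattice paths on $\mathbb{Z}_{\ge 0}^{2}$ from $(r,0)$ to $(0,c)$ and $w(P)$ is the weight with respect to \eqref{eq:GenLQLPLabelling}. Since $f_{0,c} = 1$ by \eqref{eq:GenLQLPMoments}, this already yields $f_{r,c} = \sum_{P} w(P)$. Finally I would substitute the factorisation of $w(P)$ supplied by Lemma \ref{lem:GenLQLPWeight}, namely $w(P) = a^{\mathsf{D}_{0}(\lambda(P))}\bigl(\prod_{i=1}^{r-1} p_i^{\mathsf{D}_{-i}(\lambda(P))}\bigr)\bigl(\prod_{j=1}^{c-1} q_j^{\mathsf{D}_{j}(\lambda(P))}\bigr)\omega'_{r}(P;a;\bm{p},\bm{q})$ from \eqref{eq:GenLQLPWeightW}, to arrive at the displayed identity.

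There is no genuine obstacle here; the proof is essentially a one-line combination of earlier results, parallel to the proof of Theorem \ref{thm:LQLPMomentsLPaths}. The one step deserving explicit care is the verification in the second paragraph that Corollary \ref{cor:GenLQLPARCfs} reproduces the labelling \eqref{eq:GenLQLPLabelling} after the index substitutions from \eqref{eq:VEdgeLabels}, together with the observation $f_{0,c}=1$ from \eqref{eq:GenLQLPMoments}.
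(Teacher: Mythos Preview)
Your proposal is correct and follows the same approach as the paper's proof, which simply combines Theorem \ref{thm:MomentsLPaths} with Lemma \ref{lem:GenLQLPWeight} and uses $f_{0,c}=1$ from \eqref{eq:GenLQLPMoments}. Your explicit verification that the labelling \eqref{eq:GenLQLPLabelling} coincides with the one dictated by \eqref{eq:VEdgeLabels} via Corollary \ref{cor:GenLQLPARCfs} is a welcome bit of care that the paper leaves implicit.
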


\begin{proof}
  Delete $w(P)$ from \eqref{eq:MomentsLPaths} and \eqref{eq:GenLQLPWeight} where
  $f_{0,c} = 1$ from \eqref{eq:GenLQLPMoments}.
\end{proof}

Corollary \ref{cor:DetNILPs} and Lemma \ref{lem:GenLQLPWeight} imply the following.

\begin{thm} \label{thm:GenLQLPDetsLPaths}
  Let $\mathcal{F}$ be the linear functional (for the generalized little $q$-Laguerre polynomials) determined by
  the moments \eqref{eq:GenLQLPMoments}.
  Let $(r,c,n) \in \mathbb{Z}_{\ge 0}^{3}$.
  Then
  \begin{subequations} \label{eq:GenLQLPDetLPaths}
    \begin{multline}
      \Delta^{(r,c)}_{n}
      \left[ \left\{ \prod_{k=0}^{n-1} (p_{r+k} q_{c+k})^{\frac{(n-k)(n-k-1)}{2}} \right\}
        \left\{ \prod_{1 \le i \le k < n} (1 - \bm{q}_{c+k-i}^{\overline{i}}) \right\} \right]^{-1} \\
      = \sum_{(P_0,\dots,P_{n-1}) \in \mathcal{LP}(r,c,n)}
      a^{\sum_{k=0}^{n-1} \mathsf{D}_{0}(\lambda(P_k))}
      \left( \prod_{i=1}^{r-1} p_{i}^{\sum_{k=0}^{n-1} \mathsf{D}_{-i}(\lambda(P_k))} \right)
      \left( \prod_{j=1}^{c-1} q_{j}^{\sum_{k=0}^{n-1} \mathsf{D}_{j}(\lambda(P_k))} \right) \\
      \times \omega'_{r,n}(P_0,\dots,P_{n-1};a;\bm{p},\bm{q})
    \end{multline}
    where
    \begin{multline}
      \omega'_{r,n}(P_0,\dots,P_{n-1};a;\bm{p},\bm{q}) \\
      = \prod_{k=0}^{n-1} \left\{ \prod_{i=k+1}^{d_k} (1 - \bm{q}_{\lambda_i(P_k)-i}^{\overline{i}}) \right\}
      \left\{ \prod_{i=d_k+1}^{r+k} (1 - a \bm{p}^{\overline{i-\lambda_i(P_k)-1}} \bm{q}^{\overline{\lambda_i(P_k)}}) \right\}.
    \end{multline}
  \end{subequations}
  where $d_k = \mathsf{D}(\lambda(P_k))$.
\end{thm}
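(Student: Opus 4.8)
The plan is to combine Corollary \ref{cor:DetNILPs} (the Lindström--Gessel--Viennot evaluation of $\Delta^{(r,c)}_n$) with the weight factorization of Lemma \ref{lem:GenLQLPWeight}, in exactly the same spirit as the proof of Theorem \ref{thm:LQLPDetNILP}, the only extra work being to track the overdetermined product of ``$w'$''-factors and the ``$\omega'$''-factors that get attached to the shifted path $P_k$ going from $(r+k,0)$ to $(0,c+k)$ rather than from $(r,0)$ to $(0,c)$. First I would invoke Corollary \ref{cor:DetNILPs} with the moments \eqref{eq:GenLQLPMoments}, noting $f_{0,c+k}=1$, to write
\begin{align}
  \Delta^{(r,c)}_{n} = \sum_{(P_0,\dots,P_{n-1})\in\mathcal{LP}(r,c,n)} \prod_{k=0}^{n-1} w(P_k),
\end{align}
where each $w(P_k)$ is computed with the labelling \eqref{eq:GenLQLPLabelling}.

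Next I would apply Lemma \ref{lem:GenLQLPWeight} to each factor $w(P_k)$, but with the source/target of $P_k$ being $(r+k,0)\to(0,c+k)$; so the lemma gives (reading off $r\gets r+k$, $c\gets c+k$)
\begin{align}
  w(P_k) = a^{\mathsf{D}_{0}(\lambda(P_k))}
    \Bigl(\prod_{i=1}^{r+k-1} p_i^{\mathsf{D}_{-i}(\lambda(P_k))}\Bigr)
    \Bigl(\prod_{j=1}^{c+k-1} q_j^{\mathsf{D}_{j}(\lambda(P_k))}\Bigr)
    \,\omega'_{r+k}(P_k;a;\bm{p},\bm{q}).
\end{align}
The point now is that, under the bijection which forces $P_k$ to begin with $k$ east steps and end with $k$ north steps (the very structure that makes $\mathcal{LP}(r,c,n)$ nonempty), the first $k$ boxes in the top $k$ rows of $\lambda(P_k)$ are exactly the ``prepended'' rectangle $(k^k)$ shifted in; these contribute a fixed monomial in $p_{r+k},\dots$ and $q_{c+k},\dots$ and a fixed product of $(1-\bm q_{\dots}^{\overline i})$-factors, independent of the actual path, namely precisely the bracketed prefactor $\prod_{k}(p_{r+k}q_{c+k})^{(n-k)(n-k-1)/2}$ (up to the standard re-indexing over $k$) times $\prod_{1\le i\le k<n}(1-\bm q_{c+k-i}^{\overline i})$. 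I would make this explicit by splitting, for each $k$, the product $\prod_{i=1}^{r+k}$ in $\omega'_{r+k}$ into $\prod_{i=1}^{k}$ (the forced part, path-independent) and $\prod_{i=k+1}^{r+k}$ (the genuine part), and similarly extract the path-independent diagonal contributions $\mathsf{D}_{-i},\mathsf{D}_{j}$ coming from the prepended rectangle; the path-independent pieces multiply out to exactly the bracket on the left side of \eqref{eq:GenLQLPDetLPaths}, and what remains of $\prod_k w(P_k)$ is precisely the right-hand side, with $\omega'_{r,n}$ as defined.

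The main obstacle is the bookkeeping of the forced prefix: I must verify that the diagonal-box counts $\mathsf{D}_{-i}$ and $\mathsf{D}_{j}$ of the $(k^k)$-rectangle-plus-shift contribute the stated $(p_{r+k}q_{c+k})^{(n-k)(n-k-1)/2}$ with the correct exponents after summing over the forced rows, and that the forced factors $\prod_{i=1}^{k}(1-\bm q_{\lambda_i(P_k)-i}^{\overline i})$ collapse to $\prod_{i=1}^{k}(1-\bm q_{c+k-i}^{\overline i})$ (using $\lambda_i(P_k)=c+k$ for $i\le k$), and that no double counting between $w'$ and $w''$ pieces occurs. This is a finite, deterministic computation: one checks the exponent identity $\sum_{i}(\text{length of $i$-th forced row}) = \binom{k}{2}$-type sums row by row and re-indexes $k\mapsto n-k$ as in \eqref{eq:BjNormTr}. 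Once that identity is pinned down, dividing both sides of the LGV identity by the bracketed prefactor yields \eqref{eq:GenLQLPDetLPaths} immediately; I would close with the remark that setting $p_\ell=q_\ell=q$ and using Proposition \ref{prop:GenLQLP2LQLP} recovers Theorem \ref{thm:LQLPDetNILP}.
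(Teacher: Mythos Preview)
Your proposal is correct and follows essentially the same route as the paper: start from Corollary~\ref{cor:DetNILPs} with $f_{0,c+k}=1$, apply Lemma~\ref{lem:GenLQLPWeight} to each $P_k$ (with $r\gets r+k$, $c\gets c+k$), then use the fact that non-intersection forces $\lambda_i(P_k)=c+k$ for $1\le i\le k$ and $\mathsf{D}_{-r-i}(\lambda(P_k))=\mathsf{D}_{c+i}(\lambda(P_k))=k-i$ for $0\le i\le k$, so that the path-independent pieces of $\prod_k w'(P_k)$ and $\prod_k w''(P_k)$ factor out as exactly the bracketed prefactor. One small wording caveat: the forced block is not a ``$(k^k)$'' square but the full top $k$ rows of length $c+k$ (together with the last $k$ columns being full); also, it is the \emph{non-intersecting condition}, not the bijection to $\mathcal{P}(r,c,n)$, that enforces the initial $k$ east and final $k$ north steps---the bijection only enters later in Theorem~\ref{thm:NFPPBSP02}.
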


\begin{proof}
  For a lattice path $P$ let $w'(P)$ and $w''(P)$ be the weights of $P$ defined in
  the proof of Lemma \ref{lem:GenLQLPWeight}.
  Then $w'(P)$ satisfies \eqref{eq:GenLQLPWeight01}, and
  $w''(P)$ is equal to $\omega'_{r}(P;a;\bm{p},\bm{q})$ defined by \eqref{eq:GenLQLPWeightOmega'}.
  Since $w(P) = w'(P) w''(P)$ Corollary \ref{cor:DetNILPs} and Lemma \ref{lem:GenLQLPWeight} induce that
  \begin{align} \label{eq:GenLQLPDetWeight00}
    \Delta^{(r,c)}_{n} = \sum_{(P_0,\dots,P_{n-1}) \in \mathcal{LP}(r,c,n)} \prod_{k=0}^{n-1} w'(P_k) w''(P_k)
  \end{align}
  where $f_{0,c+k} = 1$ for every $k$ from \eqref{eq:GenLQLPMoments}.

  Let $(P_0,\dots,P_{n-1}) \in \mathcal{LP}(r,c,n)$.
  The condition for the lattice paths $P_0,\dots,P_{n-1}$ to be non-intersecting forces $P_k$ to
  start (from $(r+k,0)$) with $k$ consecutive east steps and ends (to $(0,c+k)$) with $k$ consecutive north steps.
  Hence
  \begin{subequations}
    \begin{align}
      \label{eq:DForced}
      & \mathsf{D}_{-r-i}(P_k) = \mathsf{D}_{c+i}(P_k) = k-i && \text{for $0 \le i \le k < n$;} \\
      \label{eq:LambdaForced}
      & \lambda_i(P_k) = c+k                                 && \text{for $1 \le i \le k < n$.}
    \end{align}
  \end{subequations}
  We have from \eqref{eq:DForced} that
  \begin{subequations} \label{eq:GenLQLPDetWeight01}
    \begin{align}
      \prod_{k=0}^{n-1} w'(P_k)
      & = \prod_{k=0}^{n-1} a^{\mathsf{D}_{0}(\lambda(P_k))}
      \left( \prod_{i=1}^{r+k-1} p_{i}^{\mathsf{D}_{-i}(\lambda(P_k))} \right)
      \left( \prod_{j=1}^{c+k-1} q_{j}^{\mathsf{D}_{j}(\lambda(P_k))} \right) \notag \\
      & = a^{\sum_{k=0}^{n-1} \mathsf{D}_{0}(\lambda(P_k))}
      \left( \prod_{i=1}^{r-1} p_{i}^{\sum_{k=0}^{n-1} \mathsf{D}_{-i}(\lambda(P_k))} \right)
      \left( \prod_{j=1}^{c-1} q_{j}^{\sum_{k=0}^{n-1} \mathsf{D}_{j}(\lambda(P_k))} \right) \notag \\
      & \qquad \times \left\{ \prod_{0 \le i \le k < n} (p_{r+i} q_{c+i})^{k-i} \right\}
    \end{align}
    where
    \begin{align}
      \prod_{0 \le i \le k < n} (p_{r+i} q_{c+i})^{k-i}
      = \prod_{k=0}^{n-1} (p_{r+k} q_{c+k})^{\frac{(n-k)(n-k-1)}{2}}.
    \end{align}
  \end{subequations}
  We have from \eqref{eq:LambdaForced} that
  \begin{align} \label{eq:GenLQLPDetWeight02}
    \prod_{k=0}^{n-1} w''(P_k)
    & = \prod_{k=0}^{n-1} \left\{ \prod_{i=1}^{d_k} (1 - \bm{q}_{\lambda_i(P_k)-i}^{\overline{i}}) \right\}
    \left\{ \prod_{i=d_k+1}^{r+k} (1 - a \bm{p}^{\overline{i-\lambda_i(P_k)-1}} \bm{q}^{\overline{\lambda_i(P_k)}}) \right\} \notag \\
    & = \omega'_{r,n}(P_0,\dots,P_{n-1};a;\bm{p},\bm{q}) \times \prod_{1 \le i \le k < n} (1 - \bm{q}_{c+k-i}^{\overline{i}})
  \end{align}
  Substituting \eqref{eq:GenLQLPDetWeight01} and \eqref{eq:GenLQLPDetWeight02} for
  \eqref{eq:GenLQLPDetWeight00} we obtain \eqref{eq:GenLQLPDetLPaths} in the theorem.
\end{proof}

Theorem \ref{thm:GenLQLPOrthty}, Corollary \ref{cor:GenLQLPARCfs}, Lemma \ref{lem:GenLQLPWeight} and
Theorems \ref{thm:GenLQLPMomentsLPaths} and \ref{thm:GenLQLPDetsLPaths} in this section respectively
recover Proposition \ref{prop:LQLPOrthty}, Corollary \ref{cor:LQLPARs},
Lemma \ref{lem:LQLPPathWeight} and Theorems \ref{thm:LQLPMomentsLPaths} and \ref{thm:LQLPDetNILP}
in Section \ref{sec:LQLP}
with the specialized parameters $a \gets a q$ and $p_i = q_j = q$.
This reduction is consistent with that of the generalized little $q$-Laguerre polynomials to
the little $q$-Laguerre polynomials mentioned in Proposition \ref{prop:GenLQLP2LQLP}.

\section{Nice formula for plane partitions with bounded size of parts, II}
\label{sec:NFPPBSP02}

We derive in Section \ref{sec:NFPPBSP02} another nice formula for plane partitions with bounded size of parts
based on the generalized little $q$-Laguerre polynomials introduced and examined in Section \ref{sec:GenLQLP}.
The nice formula would generalize
the {\em trace generating function} for plane partitions with {\em unbounded size of parts}
\begin{align} \label{eq:LTrGFRep}
  \sum_{\pi \in \mathcal{P}(r,c)} \prod_{-r < \ell < c} q_{\ell}^{\mathsf{tr}_{\ell}(\pi)}
  = \prod_{i=0}^{r-1} \prod_{j=0}^{c-1} \left( 1 - \prod_{\ell=-i}^{j} q_{\ell} \right)^{-1}
\end{align}
developed by Gansner \cite{Gansner(1981Burge),Gansner(1981HG)} where
$\mathsf{tr}_{\ell}(\pi)$ denotes the {\em $\ell$-trace} of a plane partition $\pi = (\pi_{i,j})$ defined by
$\mathsf{tr}_{\ell}(\pi) = \sum_{j-i=\ell} \pi_{i,j}$.

The discussion in this section is totally parallel to that in Section \ref{sec:NFPPBSP01}:
Employ the bijection between $\mathcal{P}(r,c,n)$ and $\mathcal{LP}(r,c,n)$ to translate
Theorem \ref{thm:GenLQLPDetsLPaths} in the language of plane partitions.
(The bijection is discussed in Section \ref{sec:NFPPBSP01}.)

Let us remind several symbols defined in the preceding sections.
For a lattice path $P$ on the square lattice $\mathbb{Z}_{\ge 0}^{2}$
$\lambda(P)$ denotes the (integer) partition whose Young diagram is given by the finite region bordered by $P$;
for a plane partition $\pi$, $\lambda_k(\pi)$ the partition whose Young diagram is given by
the cross-section at level $k$ of the 3D Young diagram of $\pi$, see Figure \ref{fig:3DYDCrossSec}.
Let us write $\lambda_{k,i}(\pi)$ for the $i$-th part of the partition $\lambda_k(\pi)$.

Suppose that a plane partition $\pi \in \mathcal{P}(r,c,n)$ and
an $n$-tuple $(P_0,\dots,P_{n-1}) \in \mathcal{LP}(r,c,n)$ of non-intersecting lattice paths
correspond to each other by the bijection.
\begin{subequations} \label{eq:BjStats02}
  It readily follows from \eqref{eq:Bijection} that
  \begin{align}
    \mathsf{D}_{\ell}(\lambda(P_k)) & = \mathsf{D}_{\ell}(\lambda_{n-k}(\pi)) + k
  \end{align}
  for $0 \le k < n$ and $-r < \ell < c$ that implies
  \begin{align} \label{eq:BijectionTrL}
    \sum_{k=0}^{n-1} \mathsf{D}_{\ell}(\lambda(P_k)) & = \mathsf{tr}_{\ell}(\pi) + \frac{n(n-1)}{2}.
  \end{align}
  It also follows from \eqref{eq:Bijection} that
  \begin{align} \label{eq:BijectionLambdaI}
    \lambda_{i}(P_k) = \lambda_{n-k,i-k}(\pi) + k
  \end{align}
  for $0 \le k < n$ and $k < i \le r+k$.
\end{subequations}

\begin{thm} \label{thm:NFPPBSP02}
  Let $(r,c,n) \in \mathbb{Z}_{\ge 0}^{3}$.
  \begin{subequations} \label{eq:NFPPBSP02}
    Then
    \begin{multline} \label{eq:NFPPBSP02NF}
      \sum_{\pi \in \mathcal{P}(r,c,n)}
      a^{\mathsf{tr}_{0}(\pi)}
      \left( \prod_{i=1}^{r-1} p_{i}^{\mathsf{tr}_{-i}(\pi)} \right)
      \left( \prod_{j=1}^{c-1} q_{j}^{\mathsf{tr}_{j}(\pi)} \right)
      \omega_{r,n}(\pi;a;\bm{p},\bm{q}) \\
      = \prod_{i=0}^{r-1} \prod_{j=0}^{c-1} \prod_{k=0}^{n-1}
      \frac{1 - a \bm{p}^{\overline{i}} \bm{q}^{\overline{j+k+1}}}{1 - a \bm{p}^{\overline{i}} \bm{q}^{\overline{j+k}}}
    \end{multline}
    where
    \begin{multline}
      \omega_{r,n}(\pi;a;\bm{p},\bm{q}) = \prod_{k=1}^{\pi_{1,1}}
      \left\{ \prod_{i=1}^{D_k} (1 - \bm{q}_{\lambda_{k,i}(\pi)-i}^{\overline{n-k+i}}) \right\} \\
      \times \left\{ \prod_{i=D_k+1}^{r} (1 - a \bm{p}^{\overline{i-\lambda_{k,i}(\pi)-1}} \bm{q}^{\overline{n-k+\lambda_{k,i}(\pi)}}) \right\} \left\{ \prod_{i=1}^{r} (1 - a \bm{p}^{\overline{i-1}} \bm{q}^{\overline{n-k}}) \right\}^{-1}
    \end{multline}
    where $\pi_{1,1}$ denotes the $(1,1)$-part of a plane partition $\pi$, and
    $D_k = \mathsf{D}(\lambda_k(\pi))$.
  \end{subequations}
\end{thm}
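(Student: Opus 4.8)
The plan is to follow the template of the proof of Theorem~\ref{thm:NFPPBSO01}: transport the determinant identity \eqref{eq:GenLQLPDetLPaths} of Theorem~\ref{thm:GenLQLPDetsLPaths} into the language of plane partitions through the bijection between $\mathcal{P}(r,c,n)$ and $\mathcal{LP}(r,c,n)$ recalled in Section~\ref{sec:NFPPBSP01}, and then evaluate the moment determinant $\Delta^{(r,c)}_{n}$ in closed form.

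First I would apply the bijection together with the statistic identities \eqref{eq:BjStats02}. By \eqref{eq:BijectionTrL} the monomial $a^{\sum_{k}\mathsf{D}_{0}(\lambda(P_{k}))}\prod_{i}p_{i}^{\sum_{k}\mathsf{D}_{-i}(\lambda(P_{k}))}\prod_{j}q_{j}^{\sum_{k}\mathsf{D}_{j}(\lambda(P_{k}))}$ on the right of \eqref{eq:GenLQLPDetLPaths} becomes $\bigl(a\prod_{i=1}^{r-1}p_{i}\prod_{j=1}^{c-1}q_{j}\bigr)^{\binom{n}{2}}$ times the monomial appearing in \eqref{eq:NFPPBSP02NF}. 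By \eqref{eq:BijectionLambdaI}, after the change of index $i\mapsto i-k$ followed by $k\mapsto n-k$, the level-$k$ factors of $\omega'_{r,n}(P_{0},\dots,P_{n-1};a;\bm{p},\bm{q})$ with $k\le\pi_{1,1}$ reorganize exactly into the two bracketed products defining $\omega_{r,n}(\pi;a;\bm{p},\bm{q})$, while the levels $k>\pi_{1,1}$ (where $\lambda_{n-k}(\pi)=\emptyset$, so the Durfee sizes and parts vanish) contribute $\prod_{i=1}^{r}(1-a\bm{p}^{\overline{i-1}}\bm{q}^{\overline{n-k}})$. The denominator $\{\prod_{i=1}^{r}(1-a\bm{p}^{\overline{i-1}}\bm{q}^{\overline{n-k}})\}^{-1}$ built into $\omega_{r,n}$ is precisely what lets us supply these missing levels in a $\pi$-independent fashion, giving
\[
  \omega'_{r,n}(P_{0},\dots,P_{n-1};a;\bm{p},\bm{q})
  = \omega_{r,n}(\pi;a;\bm{p},\bm{q})\,
    \prod_{k=0}^{n-1}\prod_{i=1}^{r}\bigl(1-a\bm{p}^{\overline{i-1}}\bm{q}^{\overline{k}}\bigr).
\]
Substituting all of this into \eqref{eq:GenLQLPDetLPaths} yields an intermediate identity expressing the left-hand side of \eqref{eq:NFPPBSP02NF} as $\Delta^{(r,c)}_{n}$ divided by an explicit product: the bracket of \eqref{eq:GenLQLPDetLPaths}, the power $\bigl(a\prod p_{i}\prod q_{j}\bigr)^{\binom{n}{2}}$, and $\prod_{k=0}^{n-1}\prod_{i=1}^{r}(1-a\bm{p}^{\overline{i-1}}\bm{q}^{\overline{k}})$.

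Then I would evaluate $\Delta^{(r,c)}_{n}=\prod_{k=0}^{n-1}h^{(r,c)}_{k}$ by \eqref{eq:BOPNormConstDet} and the normalization constant \eqref{eq:GenLQLPNormConst} of Theorem~\ref{thm:GenLQLPOrthty}, using $f_{r,c+k}=\prod_{i=0}^{r-1}(1-a\bm{p}^{\overline{i}}\bm{q}^{\overline{c+k}})$ and the factorization $\bm{q}^{\overline{c+j}}-\bm{q}^{\overline{c+k}}=\bm{q}^{\overline{c+j}}(1-\bm{q}_{c+j}^{\overline{k-j}})$. From here it is bookkeeping: the products $\prod_{1\le i\le k<n}(1-\bm{q}_{c+k-i}^{\overline{i}})$ match between $\Delta^{(r,c)}_{n}$ and the bracket; the pure power of $a$ and the powers of $p_{1},\dots,p_{r-1}$ and $q_{1},\dots,q_{c-1}$ split off from $\bm{p}^{\overline{r+j}}$ and $\bm{q}^{\overline{c+j}}$ cancel the factor $\bigl(a\prod p_{i}\prod q_{j}\bigr)^{\binom{n}{2}}$ (since $\sum_{j=0}^{n-2}(n-1-j)=\binom{n}{2}$); and the remaining powers of $p_{r},p_{r+1},\dots$ and $q_{c},q_{c+1},\dots$ cancel $\prod_{k=0}^{n-1}(p_{r+k}q_{c+k})^{(n-k)(n-k-1)/2}$ in the bracket (after re-indexing, using $\sum_{\ell=1}^{n-1-k}\ell=\tfrac{(n-k)(n-k-1)}{2}$). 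What remains is $\prod_{k=0}^{n-1}\prod_{i=0}^{r-1}(1-a\bm{p}^{\overline{i}}\bm{q}^{\overline{c+k}})$ over $\prod_{k=0}^{n-1}\prod_{i=0}^{r-1}(1-a\bm{p}^{\overline{i}}\bm{q}^{\overline{k}})$, and since for fixed $i$ the product $\prod_{j=0}^{c-1}\dfrac{1-a\bm{p}^{\overline{i}}\bm{q}^{\overline{j+k+1}}}{1-a\bm{p}^{\overline{i}}\bm{q}^{\overline{j+k}}}$ telescopes in $j$ to $\dfrac{1-a\bm{p}^{\overline{i}}\bm{q}^{\overline{c+k}}}{1-a\bm{p}^{\overline{i}}\bm{q}^{\overline{k}}}$, this equals the right-hand side of \eqref{eq:NFPPBSP02NF}.

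I expect the main obstacle to be purely organizational rather than conceptual: carrying all the $\bm{p}^{\overline{\,\cdot\,}}$ and $\bm{q}^{\overline{\,\cdot\,}}$ factors through the bijection, verifying the displayed relation between $\omega'_{r,n}$ and $\omega_{r,n}$ (in particular the treatment of the empty cross-sections at levels $k>\pi_{1,1}$), and checking that every monomial prefactor produced by the statistic shifts in \eqref{eq:BjStats02} cancels cleanly against a term in the bracket of \eqref{eq:GenLQLPDetLPaths} or in $\prod_{k}h^{(r,c)}_{k}$. Once those cancellations are in place the concluding telescoping is immediate. A useful consistency check throughout is to specialize $a\gets aq$ and $p_{i}=q_{j}=q$, which must reduce everything to the proof of Theorem~\ref{thm:NFPPBSO01}.
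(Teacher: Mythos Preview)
Your proposal is correct and follows essentially the same route as the paper: transport Theorem~\ref{thm:GenLQLPDetsLPaths} through the bijection using \eqref{eq:BjStats02} to obtain an intermediate identity of the form \eqref{eq:NFPPBSP0200}, then evaluate $\Delta^{(r,c)}_{n}=\prod_{k}h^{(r,c)}_{k}$ from \eqref{eq:GenLQLPNormConst} and simplify. Your write-up is in fact more explicit than the paper's about the cancellations (the matching of $\prod_{1\le i\le k<n}(1-\bm{q}_{c+k-i}^{\overline{i}})$, the split of $\bm{p}^{\overline{r+j}}$ and $\bm{q}^{\overline{c+j}}$, and the final telescoping in $j$), all of which the paper absorbs into ``we straightforwardly find that \eqref{eq:GenLQLPDetEval}''.
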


\begin{proof}
  Suppose that $\pi \in \mathcal{P}(r,c,n)$ and $(P_0,\dots,P_{n-1}) \in \mathcal{LP}(r,c,n)$ correspond to
  each other by the bijection.
  We then have from \eqref{eq:BijectionTrL} that
  \begin{multline}
    a^{\sum_{k=0}^{n-1} \mathsf{D}_{0}(\lambda(P_k))}
    \left( \prod_{i=1}^{r-1} p_{i}^{\sum_{k=0}^{n-1} \mathsf{D}_{-i}(\lambda(P_k))} \right)
    \left( \prod_{j=1}^{c-1} q_{j}^{\sum_{k=0}^{n-1} \mathsf{D}_{j}(\lambda(P_k))} \right) \\
    = \left\{ a \left( \prod_{i=1}^{r-1} p_i \right) \left( \prod_{j=1}^{c-1} q_j \right) \right\}^{\frac{n(n-1)}{2}}
    \times a^{\mathsf{tr}_{0}(\pi)}
    \left( \prod_{i=1}^{r-1} p_{i}^{\mathsf{tr}_{-i}(\pi)} \right)
    \left( \prod_{j=1}^{c-1} q_{j}^{\mathsf{tr}_{j}(\pi)} \right).
  \end{multline}
  We also have from \eqref{eq:BijectionLambdaI} that
  \begin{align}
    \omega'_{r,n}(P_0,\dots,P_{n-1};a;\bm{p},\bm{q})
    = \left\{ \prod_{i=0}^{r-1} \prod_{k=0}^{n-1} (1 - a \bm{p}^{\overline{i}} \bm{q}^{\overline{k}}) \right\}
    \times \omega_{r,n}(\pi;a;\bm{p},\bm{q})
  \end{align}
  Note that $\lambda_{k}(\pi) = \emptyset$ and $\lambda_{k,i}(\pi) = 0$ for $k > \pi_{1,1}$.
  The formula \eqref{eq:GenLQLPDetLPaths} in Theorem \ref{thm:GenLQLPDetsLPaths} is hence equivalent to
  \begin{subequations}
    \begin{multline} \label{eq:NFPPBSP0200}
      \sum_{\pi \in \mathcal{P}(r,c,n)}
      a^{\mathsf{tr}_{0}(\pi)}
      \left( \prod_{i=1}^{r-1} p_{i}^{\mathsf{tr}_{-i}(\pi)} \right)
      \left( \prod_{j=1}^{c-1} q_{j}^{\mathsf{tr}_{j}(\pi)} \right)
      \omega_{r,n}(\pi;a;\bm{p},\bm{q}) \\
      = \frac{\Delta^{(r,c)}_{n}}{\kappa^{(r,c)}_{n}}
      \left\{ \prod_{i=0}^{r-1} \prod_{k=0}^{n-1} (1 - a \bm{p}^{\overline{i}} \bm{q}^{\overline{k}}) \right\}^{-1}
    \end{multline}
    where
    \begin{multline}
      \kappa^{(r,c)}_{n}
      = \left\{ a \left( \prod_{i=1}^{r-1} p_i \right) \left( \prod_{j=1}^{c-1} q_j \right) \right\}^{\frac{n(n-1)}{2}}
      \left\{ \prod_{k=0}^{n-1} (p_{r+k} q_{c+k})^{\frac{(n-k)(n-k-1)}{2}} \right\} \\
      \times \left\{ \prod_{1 \le i \le k < n} (1 - \bm{q}_{c+k-i}^{\overline{i}}) \right\}.
    \end{multline}
  \end{subequations}
  The proof thus amounts to the evaluation of the determinant $\Delta^{(r,c)}_{n}$ of
  moments \eqref{eq:GenLQLPMoments} of the generalized little $q$-Laguerre polynomials
  (examined in Section \ref{sec:GenLQLP}).
  We have from \eqref{eq:BOPNormConstDet} that
  \begin{align}
    \Delta^{(r,c)}_{n} = \prod_{k=0}^{n-1} h^{(r,c)}_{k}
  \end{align}
  for general biorthogonal polynomials.
  Substituting the normalization constant \eqref{eq:GenLQLPNormConst} of
  the generalized little $q$-Laguerre polynomials for the right-hand side we straightforwardly find that
  \begin{align} \label{eq:GenLQLPDetEval}
    \Delta^{(r,c)}_{n} = \kappa^{(r,c)}_{n}
    \prod_{i=0}^{r-1} \prod_{k=0}^{n-1} (1 - a \bm{p}^{\overline{i}} \bm{q}^{\overline{c+k}}).
  \end{align}
  Substituting \eqref{eq:GenLQLPDetEval} for \eqref{eq:NFPPBSP0200} we obtain the formula \eqref{eq:NFPPBSP02NF}.
\end{proof}

The nice formula \eqref{eq:NFPPBSP02} for plane partitions with bounded size of parts generalizes
the trace generating function \eqref{eq:LTrGFRep} for those with unbounded size of parts.
Indeed, $\mathcal{P}(r,c,n) \to \mathcal{P}(r,c)$, $\omega_{r,n}(\pi;a;\bm{p},\bm{q}) \to 1$ and
\begin{align} \label{eq:TrGFRefinedReduction}
  \prod_{i=0}^{r-1} \prod_{j=0}^{c-1} \prod_{k=0}^{n-1} \frac{1 - a \bm{p}^{\overline{i}} \bm{q}^{\overline{j+k+1}}}{1 - a \bm{p}^{\overline{i}} \bm{q}^{\overline{j+k}}}
  = \prod_{i=0}^{r-1} \prod_{j=0}^{c-1} \frac{1 - a \bm{p}^{\overline{i}} \bm{q}^{\overline{n+j}}}{1 - a \bm{p}^{\overline{i}} \bm{q}^{\overline{j}}}
  \to \prod_{i=0}^{r-1} \prod_{j=0}^{c-1} (1 - a \bm{p}^{\overline{i}} \bm{q}^{\overline{j}})^{-1}
\end{align}
as $n \to \infty$ since $\lim_{n \to \infty} \bm{q}^{\overline{n}} = 0$, where
the convergences of $\omega_{r,n}(\pi;a;\bm{p},\bm{q})$ and in \eqref{eq:TrGFRefinedReduction} are as
formal power series in $q_1,q_2,q_3,\dots$
(or as complex numbers with $|q_{\ell}| < \varepsilon < 1$ for every $\ell$ with some real $\varepsilon \in (0,1)$).
We thus obtain as a consequence of \eqref{eq:NFPPBSP02} that
\begin{align}
  \sum_{\pi \in \mathcal{P}(r,c)}
  a^{\mathsf{tr}_{0}(\pi)}
  \left( \prod_{i=1}^{r-1} p_{i}^{\mathsf{tr}_{-i}(\pi)} \right)
  \left( \prod_{j=1}^{c-1} q_{j}^{\mathsf{tr}_{j}(\pi)} \right)
  = \prod_{i=0}^{r-1} \prod_{j=0}^{c-1} (1 - a \bm{p}^{\overline{i}} \bm{q}^{\overline{j}})^{-1}
\end{align}
that is nothing but the trace generating function \eqref{eq:LTrGFRep} with
$a \gets q_0$ and $q_{-i} = p_{i}$ for $i \ge 1$.

The nice formula \eqref{eq:NFPPBSP02} also generalizes the nice formula \eqref{eq:NFPPBSO01} derived from
the little $q$-Laguerre polynomials where the former respects all the $\ell$-traces while
the latter only the ($0$-)trace and the norm that is equal to the sum of the $\ell$-traces.
It is easy to see that we can derive \eqref{eq:NFPPBSO01} from \eqref{eq:NFPPBSP02} by
the specialization that $a \gets a q$ and $p_i = q_j = q$ for every $i$ and $j$.
This reduction is consistent with that from the generalized little $q$-Laguerre polynomials to
the little $q$-Laguerre polynomials (Proposition \ref{prop:GenLQLP2LQLP}).


\appendix

\section{Proof of Lemma \ref{lem:GenQCV}}
\label{sec:GenQCV}

We give a proof of Lemma \ref{lem:GenQCV} in Section \ref{sec:GenLQLP}.
The proof depends on the following two facts.

\begin{fact} \label{fact:NewtonInterpol}
  Let $\alpha_0,\alpha_1,\alpha_2,\dots$ be a sequence of constants.
  Let us consider a (Newton) polynomial in $x$
  \begin{align}
    f(x) = \sum_{i=0}^{n} c_i \prod_{k=0}^{i-1} (x - \alpha_k)
  \end{align}
  with constant coefficients $c_i$.
  We determine constants $c^{(t)}_{i}$, $t \in \mathbb{Z}_{\ge 0}$, $0 \le i \le n$, by the recurrence
  \begin{align} \label{eq:NewtonInterpolDD}
    c^{(t+1)}_{i} = c^{(t)}_{i} + (\alpha_{t+i+1} - \alpha_{t}) c^{(t)}_{i+1}
  \end{align}
  with $c^{(0)}_{i} = c_i$ and $c^{(t)}_{n+1} = 0$.
  Then
  \begin{align}
    f(x) = \sum_{i=0}^{n} c^{(t)}_i \prod_{k=0}^{i-1} (x - \alpha_{t+k})
  \end{align}
  and therefore $f(\alpha_{t}) = c^{(t)}_{0}$ for each $t \in \mathbb{Z}_{\ge 0}$.
\end{fact}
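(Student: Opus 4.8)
The plan is to prove the displayed identity
$f(x) = \sum_{i=0}^{n} c^{(t)}_i \prod_{k=0}^{i-1}(x - \alpha_{t+k})$ by induction on $t$, with the base case $t = 0$ being exactly the definition of $f(x)$. For the inductive step I would abbreviate the shifted Newton basis by $B^{(t)}_i(x) = \prod_{k=0}^{i-1}(x - \alpha_{t+k})$, so that the induction hypothesis reads $f(x) = \sum_{i=0}^{n} c^{(t)}_i B^{(t)}_i(x)$ and the target is $f(x) = \sum_{i=0}^{n} c^{(t+1)}_i B^{(t+1)}_i(x)$.

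The key algebraic step is the one-term basis-change relation
\begin{align*}
  B^{(t)}_i(x) = (x - \alpha_t)\, B^{(t+1)}_{i-1}(x)
  = B^{(t+1)}_i(x) + (\alpha_{t+i} - \alpha_t)\, B^{(t+1)}_{i-1}(x),
\end{align*}
obtained by peeling the leftmost factor $(x - \alpha_t)$ off $B^{(t)}_i$, recognizing the remaining product as $B^{(t+1)}_{i-1}$, and then splitting $x - \alpha_t = (x - \alpha_{t+i}) + (\alpha_{t+i} - \alpha_t)$ so the first summand absorbs into $B^{(t+1)}_i$. Substituting this into the induction hypothesis and reindexing the shifted sum via $j = i-1$ collects the coefficient of $B^{(t+1)}_i(x)$ into precisely $c^{(t)}_i + (\alpha_{t+i+1} - \alpha_t)\, c^{(t)}_{i+1}$, which equals $c^{(t+1)}_i$ by the defining recurrence \eqref{eq:NewtonInterpolDD}. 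The convention $c^{(t)}_{n+1} = 0$ makes the $i = n$ term come out correctly, and the identity $\alpha_t - \alpha_t = 0$ makes the $i = 0$ term harmless, so no separate boundary discussion is needed.

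Finally, I would evaluate at $x = \alpha_t$: for $i \ge 1$ the product $B^{(t)}_i(\alpha_t)$ contains the factor $(\alpha_t - \alpha_t) = 0$ and hence vanishes, while $B^{(t)}_0(\alpha_t) = 1$, which yields $f(\alpha_t) = c^{(t)}_0$. There is no real obstacle here beyond careful bookkeeping of the index shifts; the only point demanding attention is that the recurrence \eqref{eq:NewtonInterpolDD} must be read with $c^{(t)}_{n+1} = 0$ so that the coefficient of $B^{(t+1)}_n$ is correctly identified as $c^{(t+1)}_n$.
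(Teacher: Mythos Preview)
Your proposal is correct and follows exactly the approach the paper indicates: the paper simply remarks that ``the simple induction for $t \in \mathbb{Z}_{\ge 0}$ readily proves'' the fact, and your basis-change identity $B^{(t)}_i(x) = B^{(t+1)}_i(x) + (\alpha_{t+i} - \alpha_t)\, B^{(t+1)}_{i-1}(x)$ together with the reindexing is precisely that induction spelled out in full.
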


The simple induction for $t \in \mathbb{Z}_{\ge 0}$ readily proves Lemma \ref{fact:NewtonInterpol}.
The statement of Lemma \ref{fact:NewtonInterpol} is nothing but interpolation by Newton polynomials where
the recurrence \eqref{eq:NewtonInterpolDD} reads the well-known {\em divided-difference}
\begin{align}
  c^{(t)}_{i} = \frac{c^{(t+1)}_{i-1} - c^{(t)}_{i-1}}{\alpha_{t+i} - \alpha_{t}},
\end{align}
see, e.g., \cite[\S 7.1]{Baker-GravesMorris(1996)}.

\begin{fact} \label{fact:Lem4GenQCV}
  Let $u_0,u_1,u_2,\dots$ and $v_1,v_2,v_3,\dots$ be sequences of constants.
  Let
  \begin{align} \label{eq:F4GenQCV}
    F(m,n,\tau)
    = \sum_{0 \le j_{1} \le \cdots \le j_{m} \le n} \prod_{k=1}^{m} (u_{j_{k}+\tau} - v_{j_{k}+k})
  \end{align}
  for $m,n,\tau \in \mathbb{Z}_{\ge 0}$ where $F(0,n,\tau) \equiv 1$.
  The recurrence
  \begin{align} \label{eq:Rec4GenQCV}
    F(m,n,\tau+1) = F(m,n,\tau) + (u_{n+1} - u_0) F(m-1,n+1,\tau)
  \end{align}
  then holds for $m \in \mathbb{Z}_{\ge 1}$ and $n,\tau \in \mathbb{Z}_{\ge 0}$.
\end{fact}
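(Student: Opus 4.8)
The plan is to establish \eqref{eq:Rec4GenQCV} by induction on $m$, with an inner induction on $n$, the workhorse being an elementary first-difference recurrence in $n$. Splitting the summation in \eqref{eq:F4GenQCV} according to whether the largest index $j_m$ equals $n$ or lies in $\{0,\dots,n-1\}$ gives
\begin{equation*}
  F(m,n,\tau) = F(m,n-1,\tau) + (u_{n+\tau} - v_{n+m})\,F(m-1,n,\tau) \tag{$\star$}
\end{equation*}
for $m \in \mathbb{Z}_{\ge 1}$ and $n \in \mathbb{Z}_{\ge 0}$, under the natural convention $F(m,-1,\tau) = 0$ (the summation range being empty); this, together with $F(0,n,\tau) \equiv 1$, is all the structure the argument needs.

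First I would dispatch the base case $m = 1$: there the difference between the two occurrences of $F(1,n,\cdot)$ in \eqref{eq:Rec4GenQCV} is the telescoping sum $\sum_{j=0}^{n}(u_{j+\tau+1} - u_{j+\tau})$, whose value, paired with $F(0,n+1,\tau) = 1$, is the asserted identity. For the inductive step, fix $m \ge 2$ and assume \eqref{eq:Rec4GenQCV} both for $m-1$ (all $n,\tau$) and for the pair $(m,n-1)$. I would apply $(\star)$ to each of $F(m,n,\tau+1)$ and $F(m,n,\tau)$ and subtract. Using the $(m-1)$-case of \eqref{eq:Rec4GenQCV} to replace $F(m-1,n,\tau+1)$ by $F(m-1,n,\tau)$ plus a multiple of $F(m-2,n+1,\tau)$, then the $(m,n-1)$-hypothesis to rewrite $F(m,n-1,\tau+1)-F(m,n-1,\tau)$, and collecting the terms proportional to $F(m-1,n,\tau)$ separately from those proportional to $F(m-2,n+1,\tau)$, the difference should collapse — by one further application of $(\star)$, now with $(m,n)$ replaced by $(m-1,n+1)$ — onto the right-hand side of \eqref{eq:Rec4GenQCV}. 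The convention $F(m,-1,\tau)=0$ folds the $n=0$ case into this same step, so no separate boundary discussion is needed. (The whole scheme mirrors, and is designed to feed, the divided-difference recurrence \eqref{eq:NewtonInterpolDD} of Fact \ref{fact:NewtonInterpol}.)

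The one delicate point is the recombination inside the inductive step: subtracting the two instances of $(\star)$ spawns several terms, and they fuse into the single product on the right of \eqref{eq:Rec4GenQCV} only because the shift $v_{n+m}$ in $(\star)$ is exactly the shift $v_{(n+1)+(m-1)}$ that resurfaces when $(\star)$ is re-applied with $(m,n)\mapsto(m-1,n+1)$; keeping those indices aligned is the crux. I expect this to be routine bookkeeping rather than a conceptual obstacle, with $(\star)$ and the two inductive hypotheses carrying all the real weight.
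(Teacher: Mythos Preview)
Your argument is correct and takes a structurally different route from the paper's. Both induct on $m$ with the same $m=1$ telescoping base case, but the paper carries no inner induction on $n$: it writes $F(m,n,1)=\sum_{j_m=0}^{n}(u_{j_m+1}-v_{j_m+m})\,F(m-1,j_m,1)$ in one stroke, applies the $(m-1)$-hypothesis to every summand, and then reshapes the two resulting sums over $j_m$ (one via the split $u_{j_m+1}-v_{j_m+m}=(u_{j_m}-v_{j_m+m})+(u_{j_m+1}-u_{j_m})$, the other via the reindex $j_m\mapsto j_m-1$) until they recombine into $F(m,n,0)+(u_{n+1}-u_0)\,F(m-1,n+1,0)$. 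Your version trades that global sum manipulation for the local first-difference $(\star)$ plus a second induction on $n$; each inductive step is lighter algebra on a fixed number of terms, at the price of the extra inductive layer. The index coincidence $v_{n+m}=v_{(n+1)+(m-1)}$ you flag is precisely what makes both arguments close up.

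One small point: as stated, \eqref{eq:Rec4GenQCV} carries the coefficient $u_{n+1}-u_0$, whereas your telescoping sum gives $u_{n+\tau+1}-u_\tau$. The paper resolves this by passing ``without loss of generality'' to $\tau=0$ (absorbing the shift into the $u$-sequence); you should invoke the same reduction explicitly, or note that your argument in fact establishes the recurrence with the $\tau$-shifted coefficient.
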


\begin{proof}
  We prove \eqref{eq:Rec4GenQCV} by induction with respect to $m = 1,2,3,\dots$.
  We have from \eqref{eq:F4GenQCV} that
  \begin{align}
    f(1,n,\tau) = \sum_{i=0}^{n} (u_{i+\tau} - v_{i+1})
  \end{align}
  that implies \eqref{eq:Rec4GenQCV} with $m=1$.
  Assume that $m \ge 2$ and, without any loss of generality, that $\tau = 0$.
  We then have from \eqref{eq:F4GenQCV} and the assumption of induction that
  \begin{subequations}
    \begin{align}
      F(m,n,1)
      = & \sum_{j_{m}=0}^{n} (u_{j_m+1} - v_{j_m+m}) F(m-1,j_m,1) \notag \\
      \label{eq:Rec4GenQCVSum01} 
      = & \sum_{j_{m}=0}^{n} (u_{j_m+1} - v_{j_m+m}) F(m-1,j_m,0) \\
      \label{eq:Rec4GenQCVSum02}
      + & \sum_{j_{m}=0}^{n} (u_{j_m+1} - v_{j_m+m}) (u_{j_m+1} - u_0) F(m-2,j_m+1,0).
    \end{align}
  \end{subequations}
  \begin{subequations}
    The sum in \eqref{eq:Rec4GenQCVSum01} is equal to
    \begin{multline} \label{eq:Rec4GenQCVSum01Mod}
      F(m,n,0) + \sum_{j_{m}=0}^{n} (u_{j_m+1} - u_{j_m}) F(m-1,j_m,0) \\
      = F(m,n,0) + \sum_{0 \le j_1 \le \cdots \le j_{m-1} \le n+1} (u_{n+1} - u_{j_{m-1}}) \prod_{k=1}^{m-1} (u_{j_{k}} - v_{j_{k}+k})
    \end{multline}
    while the sum in \eqref{eq:Rec4GenQCVSum02}
    \begin{multline} \label{eq:Rec4GenQCVSum02Mod}
      \sum_{j_{m}=1}^{n+1} (u_{j_m} - v_{j_m+m-1}) (u_{j_m} - u_0) F(m-2,j_m,0) \\
      = \sum_{0 \le j_1 \le \cdots \le j_{m-1} \le n+1} (u_{j_{m-1}} - u_0) \prod_{k=1}^{m-1} (u_{j_{k}} - v_{j_{k}+k}).
    \end{multline}
  \end{subequations}
  Gathering \eqref{eq:Rec4GenQCVSum01Mod} and \eqref{eq:Rec4GenQCVSum02Mod}
  we obtain the right-hand side of \eqref{eq:Rec4GenQCV}.
  That completes the proof.
\end{proof}

We now prove Lemma \ref{lem:GenQCV}.

\begin{proof}[Proof of Lemma \ref{lem:GenQCV}]
  The summation formula \eqref{eq:GenQCV} is equivalently written as follows:
  \begin{align} \label{eq:GenQCVMod}
    \sum_{i=0}^{n} \left\{ \prod_{k=0}^{i-1} \left( a - \frac{1}{\bm{p}^{\overline{k}}} \right) \right\}
    \sum_{0 \le j_1 \le \cdots \le j_{n-i} \le i} \prod_{k=1}^{n-i}
    \left( \frac{1}{\bm{p}^{\overline{j_k}}} - c \bm{q}^{\overline{n-j_k-k}} \right)
    = \prod_{k=0}^{n-1} (a - c \bm{q}^{\overline{k}}).
  \end{align}
  We think of the both sides of \eqref{eq:GenQCVMod} as polynomials in $a$, and
  write $f(a)$ and $g(a)$ for the left-hand and right-hand sides respectively.
  We prove $f(a) \equiv g(a)$ as polynomials by showing that $f(a) = g(a)$ for infinitely many $a$'s.
  It is obvious that
  \begin{align} \label{eq:qCVGenGVal}
    g(1/\bm{p}^{\overline{t}})
    {} = \prod_{k=0}^{n-1} \left( \frac{1}{\bm{p}^{\overline{t}}} - c \bm{q}^{\overline{k}} \right)
  \end{align}
  for any $t \in \mathbb{Z}$.
  We evaluate $f(1/\bm{p}^{\overline{t}})$ by use of Facts \ref{fact:NewtonInterpol} and \ref{fact:Lem4GenQCV}.

  Let $\alpha_t = 1/\bm{p}^{\overline{t}}$ and let
  \begin{align}
    c_i = \sum_{0 \le j_1 \le \cdots \le j_{n-i} \le i} \prod_{k=1}^{n-i}
    \left( \frac{1}{\bm{p}^{\overline{j_k}}} - c \bm{q}^{\overline{n-j_k-k}} \right)
  \end{align}
  so that $f(a) = \sum_{i=0}^{n} c_i \prod_{k=0}^{i-1} (a - \alpha_k)$.
  The recurrence \eqref{eq:NewtonInterpolDD} with $c^{(0)}_{i} = c_i$ and $c^{(t)}_{n+1} = 0$ is then solved by
  \begin{align} \label{eq:qCVGenRecurrenceSol}
    c^{(t)}_{i} = \sum_{0 \le j_1 \le \cdots \le j_{n-i} \le i} \prod_{k=1}^{n-i}
    \left( \frac{1}{\bm{p}^{\overline{t+j_k}}} - c \bm{q}^{\overline{n-j_k-k}} \right).
  \end{align}
  Indeed the recurrence \eqref{eq:NewtonInterpolDD} with
  $\alpha_t = 1/\bm{p}^{\overline{t}}$ and \eqref{eq:qCVGenRecurrenceSol} gives
  the identity \eqref{eq:Rec4GenQCV} in Fact \ref{fact:Lem4GenQCV} with parameters
  \begin{align}
    u_j \gets \frac{1}{\bm{p}^{\overline{t+j}}}, \qquad
    v_j \gets c \bm{q}^{\overline{n-j}}, \qquad
    m \gets n-i, \qquad
    n \gets i.
  \end{align}
  Fact \ref{fact:NewtonInterpol} thereby implies that
  \begin{align}
    f(1/\bm{p}^{\overline{t}}) = f(\alpha_t) = c^{(t)}_{0}
    = \prod_{k=1}^{n} \left( \frac{1}{\bm{p}^{\overline{t}}} - c \bm{q}^{\overline{n-k}} \right)
    = \prod_{k=0}^{n-1} \left( \frac{1}{\bm{p}^{\overline{t}}} - c \bm{q}^{\overline{k}} \right)
    = g(1/\bm{p}^{\overline{t}})
  \end{align}
  for $t \in \mathbb{Z}_{\ge 0}$.
  That completes the proof of Lemma \ref{lem:GenQCV}.
\end{proof}

\bibliographystyle{amsplain}
\bibliography{ksh94}

\providecommand{\bysame}{\leavevmode\hbox to3em{\hrulefill}\thinspace}
\providecommand{\MR}{\relax\ifhmode\unskip\space\fi MR }
\providecommand{\MRhref}[2]{%
  \href{http://www.ams.org/mathscinet-getitem?mr=#1}{#2}
}
\providecommand{\href}[2]{#2}
\begin{thebibliography}{10}

\bibitem{Aigner(2007CE)}
M.~Aigner, \emph{A course in enumeration}, Graduate Texts in Mathematics, vol.
  238, Springer, Berlin, 2007.

\bibitem{Aigner-Guenter(2014TheBook)}
M.~Aigner and G.~M. Ziegler, \emph{Proofs from the book}, fifth ed.,
  Springer-Verlag, Berlin, 2014.

\bibitem{Baker-GravesMorris(1996)}
G.~A. Baker~Jr. and P.~Graves-Morris, \emph{{Pad\'e} approximants}, second ed.,
  Encyclopedia of Mathematics and its Applications, vol.~59, Cambridge
  University Press, Cambridge, 1996.

\bibitem{Chihara(1978OP)}
T.~S. Chihara, \emph{An introduction to orthogonal polynomials}, Mathematics
  and its Applications, vol.~13, Gordon and Breach Science Publishers, New
  York--London--Paris, 1978.

\bibitem{Foata(1984)}
D.~Foata, \emph{Combinatoire des identit\'es sur les polyn\^omes orthogonaux},
  Proceedings of the International Congress of Mathematicians, Vol. 1, 2
  (Warsaw, 1983), PWN, Warsaw, 1984, pp.~1541--1553.

\bibitem{Gansner(1981Burge)}
E.~R. Gansner, \emph{The enumeration of plane partitions via the {Burge}
  correspondence}, Illinois J. Math. \textbf{25} (1981), 533--554.

\bibitem{Gansner(1981HG)}
\bysame, \emph{The {Hillman}-{Grassl} correspondence and the enumeration of
  reverse plane partitions}, J. Combin. Theory Ser. A \textbf{30} (1981),
  71--89.

\bibitem{Gessel-Viennot(1985)}
I.~Gessel and G.~Viennot, \emph{Binomial determinants, paths, and hook length
  formulae}, Adv. in Math. \textbf{58} (1985), 300--321.

\bibitem{Gessel-Viennot(PRE1989)}
I.~M. Gessel and X.~G. Viennot, \emph{Determinants, paths and plane
  partitions},  (preprint).

\bibitem{Ismail(2005CQOP)}
M.~E.~H. Ismail, \emph{Classical and quantum orthogonal polynomials in one
  variable}, Encyclopedia of Mathematics and its Applications, vol.~98,
  Cambridge University Press, Cambridge, 2005.

\bibitem{Johansson(2002)}
K.~Johansson, \emph{Non-intersecting paths, random tilings and random
  matrices}, Probab. Theory Related Fields \textbf{123} (2002), 225--280.

\bibitem{Kamioka(2007)}
S.~Kamioka, \emph{A combinatorial representation with {Schr\"oder} paths of
  biorthogonality of {Laurent} biorthogonal polynomials}, Electron. J. Combin.
  \textbf{14} (2007), Research Paper 37, 22 pp. (electronic).

\bibitem{Kamioka(2008)}
\bysame, \emph{A combinatorial derivation with {Schr\"oder} paths of a
  determinant representation of {Laurent} biorthogonal polynomials}, Electron.
  J. Combin. \textbf{15} (2008), Research Paper 76, 20 pp. (electronic).

\bibitem{KimD(1992)}
D.~Kim, \emph{A combinatorial approach to biorthogonal polynomials}, SIAM J.
  Discrete Math. \textbf{5} (1992), 413--421.

\bibitem{Koekoek-Leskey-Swarttow(2010)}
R.~Koekoek, P.~A. Lesky, and R.~F. Swarttouw, \emph{Hypergeometric orthogonal
  polynomials and their $q$-analogues}, Springer Monographs in Mathematics,
  Springer-Verlag, Berlin, 2010.

\bibitem{Krattenthaler(1990)}
C.~Krattenthaler, \emph{Generating functions for plane partitions of a given
  shape}, Manuscripta Math. \textbf{69} (1990), 173--201.

\bibitem{Lindstroem(1973)}
B.~Lindstr{\"o}m, \emph{On the vector representations of induced matroids},
  Bull. London Math. Soc. \textbf{5} (1973), 85--90.

\bibitem{MacMahon(1916)}
P.~A. MacMahon, \emph{Combinatory analysis}, vol.~2, Cambridge University
  Press, Cambridge, 1916.

\bibitem{Maeda-Miki-Tsujimoto(2013)}
K.~Maeda, H.~Miki, and S.~Tsujimoto, \emph{From orthogonal polynomials to
  integrable systems}, Trans. Jpn. Soc. Ind. Appl. Math. \textbf{23} (2013),
  341--380 (Japanese).

\bibitem{Stanley(1971)}
R.~P. Stanley, \emph{Theory and application of plane partitions, {I}, {II}},
  Studies in Appl. Math. \textbf{50} (1971), 167--188, 259--279.

\bibitem{Stanley(1973)}
\bysame, \emph{The conjugate trace and trace of a plane partition}, J.
  Combinatorial Theory Ser. A \textbf{14} (1973), 53--65.

\bibitem{Szego(1975OP)}
G.~Szeg{\H o}, \emph{Orthogonal polynomials}, fourth ed., American Mathematical
  Society, Colloquium Publications, vol.~23, American Mathematical Society,
  Providence, R.I., 1975.

\bibitem{Viennot(1983OP)}
G.~Viennot, \emph{Une th\'eorie combinatoire des polyn\^omes orthogonaux
  g\'en\'eraux}, Universit\'e du Qu\'ebec \`a Montr\'eal, 1983.

\bibitem{Viennot(1985)}
\bysame, \emph{A combinatorial theory for general orthogonal polynomials with
  extensions and applications}, Orthogonal Polynomials and Applications
  (Bar-le-Duc, 1984), Lecture Notes in Math., vol. 1171, Springer, Berlin,
  1985, pp.~139--157.

\end{thebibliography}

\end{document}